\documentclass[11pt]{amsart}

\usepackage[utf8]{inputenc} 
\usepackage[T1]{fontenc} 
\usepackage[english]{babel} 
\usepackage{mathrsfs} 
\usepackage{amssymb}

\theoremstyle{plain} 
\newtheorem{thm}{Theorem} 
\newtheorem{lem}[thm]{Lemma} 
\newtheorem{cor}[thm]{Corollary}

\theoremstyle{remark} 
\newtheorem*{remark}{Remark} 
\newtheorem*{question}{Question}

\DeclareMathOperator{\mre}{Re} 
 
\DeclareMathOperator{\rank}{rank} 
\DeclareMathOperator{\codim}{codim}

\newcommand{\DD}{\mathbb D}

\begin{document} 
\title[Inequalities for Bergman spaces and Hankel forms]{Contractive 
inequalities for Bergman spaces and multiplicative Hankel forms}
\date{\today} 

\author[Bayart]{Fr\'{e}d\'{e}ric Bayart} \address{Université Clermont Auvergne, 
CNRS, LMBP, F-63000 Clermont–Ferrand, France} 
\email{bayart@math.univ-bpclermont.fr}

\author[Brevig]{Ole Fredrik Brevig} \address{Department of Mathematical 
Sciences, Norwegian University of Science and Technology (NTNU), NO-7491 
Trondheim, Norway} \email{ole.brevig@math.ntnu.no}

\author[Haimi]{Antti Haimi} \address{Faculty of Mathematics, University of 
Vienna, Oskar-Morgenstern-Platz 1, 1090 Wien, Austria} 
\email{antti.haimi@univie.ac.at}

\author[Ortega-Cerd\`{a}]{Joaquim Ortega-Cerd\`{a}} \address{Department de 
Matem\`{a}tiques i Inform\`{a}tica, Universitat de Barcelona \& Barcelona 
Graduate school in mathematics, 
Gran Via 585, 08007 Barcelona, Spain} \email{jortega@ub.edu}

\author[Perfekt]{Karl-Mikael Perfekt} \address{Department of Mathematics, The 
University of Tennessee, Knoxville, TN 37996, USA} \email{kperfekt@utk.edu}

\thanks{The second named author is supported by Grant 227768 of the Research Council of Norway. The third named author is supported by Lise Meitner grant of Austrian Science Fund (FWF). The fourth named author is supported by the MTM2014-51834-P grant by the Ministerio
de Econom\'{\i}a y Competitividad, and by the Generalitat de Catalunya (project 2014 SGR 289).}

\begin{abstract}
We consider sharp inequalities for Bergman spaces of the unit disc, 
establishing analogues of the inequality in Carleman's proof of the 
isoperimetric inequality and of Weissler's inequality for dilations. By 
contractivity and a standard tensorization procedure, the unit disc inequalities 
yield corresponding inequalities for the Bergman spaces of Dirichlet series. We 
use these results to study weighted multiplicative Hankel forms associated with 
the Bergman spaces of Dirichlet series, reproducing most of the known results on 
multiplicative Hankel forms associated with the Hardy spaces of Dirichlet 
series. In addition, we find a direct relationship between the two type of forms 
which does not exist in lower dimensions. Finally, we produce some 
counter-examples concerning Carleson measures on the infinite polydisc.
\end{abstract}

\subjclass[2010]{Primary 30H20. Secondary 47B35, 30B50.}

\maketitle

\section{Introduction} \label{sec:intro} 
Hardy spaces of the countably infinite polydisc, $H^p(\DD^\infty)$, have in 
recent years received considerable interest and study, emerging from the 
foundational papers \cite{CG86,HLS97}. Partly, the attraction is motivated by 
the subject's link with Dirichlet series, realized by identifying each complex 
variable with a prime Dirichlet monomial, $z_j=p_j^{-s}$ (see~\cite{Bayart02}). 
Hardy spaces of Dirichlet series, $\mathscr{H}^p$, are defined by requiring 
this identification to induce an isometric, multiplicative isomorphism. The  
connection to Dirichlet series gives rise to a rich interplay between operator 
theory and analytic number theory --- we refer the interested reader to the 
survey \cite{Qu15} or the monograph \cite{QQ13} as a starting point.

One aspect of the theory is the study of multiplicative Hankel forms on 
$\ell^2\times\ell^2$. A sequence $\varrho = (\varrho_1,\varrho_2,\ldots)$ 
generates a multiplicative Hankel form by the formula
\begin{equation}
	\label{eq:mhank} \varrho(a,b) = \sum_{m=1}^\infty \sum_{n=1}^\infty a_m 
b_n \varrho_{mn}, 
\end{equation}
defined at least for finitely supported sequences $a$ and $b$. Helson 
\cite{Helson05} observed that multiplicative Hankel forms are naturally 
realized as (small) Hankel operators on $H^2(\mathbb{D}^\infty)$, and went on to 
ask whether every symbol $\rho$ which generates a bounded multiplicative Hankel 
form on $\ell^2 \times \ell^2$ also induces a bounded linear functional on the 
Hardy space $H^1(\mathbb{D}^\infty)$. In other words, he asked whether there is 
an analogue of Nehari's theorem \cite{Nehari57} in this context. 

Helson's question inspired several papers 
\cite{BP15,BPSSV,Helson06,Helson10,OCS12, PP16}. Following the program outlined 
in \cite{Helson10}, it was established in \cite{OCS12} that there are bounded 
Hankel forms that do not extend to bounded functionals on 
$H^1(\mathbb{D}^\infty)$. In the positive direction, it was proved in 
\cite{Helson06} that if the Hankel form \eqref{eq:mhank} instead satisfies the 
stronger property of being Hilbert--Schmidt, then its symbol does extend to a 
bounded functional on $H^1(\mathbb{D}^\infty)$. Briefly summarizing the most 
recent development, the result of \cite{OCS12} was generalized in \cite{BP15}, 
in \cite{BPSSV} an analogue of the classical Hilbert matrix was introduced and 
studied, and in \cite{PP16} the boundedness of the Hankel form \eqref{eq:mhank} 
was characterized in terms of Carleson measures in the special case that the 
form is positive semi-definite.

Very recently, a study of Bergman spaces of Dirichlet series $\mathscr A^p$ 
begun in \cite{BL15}. In analogy with the Hardy spaces of Dirichlet series, $ 
\mathscr A^p$ is constructed from the corresponding Bergman space, 
$A^p(\DD^\infty)$. New difficulties appear in trying to put this theory on 
equal footing with its Hardy space counterpart. One of them is the lack of 
contractive inequalities for Bergman spaces in the unit disc. In the Hardy space 
of the unit disc there is a comparative abundance of such inequalities, each 
immediately implying a corresponding inequality for $\mathscr{H}^p$. For 
example, the result of \cite{Helson06} on Hilbert--Schmidt Hankel forms relies 
essentially on the classical Carleman inequality,
\[\|f\|_{A^2(\mathbb{D})} \leq \|f\|_{H^1(\mathbb{D})}.\]
A second example is furnished by Weissler's inequality: defining for $0 < r 
\leq 1$ the map $P_r \colon H^p(\mathbb{D}) \to H^q(\mathbb{D})$, by $P_rf(w) = 
f(rw)$, then $P_r$ is contractive if and only if $r \leq \sqrt{p/q} \leq 1.$ 
Since both of these inequalities are contractive, they carry on to the infinite 
polydisc by tensorization (see~Section \ref{sec:iter}), thus yielding results 
for $\mathscr H^p$.

We derive analogues of the mentioned inequalities for Bergman spaces of the 
unit disc in Section~\ref{sec:ineq}. Our proofs involve certain variants of the 
Sobolev inequalities from \cite{Bandy09} and \cite{Beckner93}. Then, in 
Section~\ref{sec:iter}, we follow the by now standard tensorization scheme to 
deduce the corresponding contractive inequalities for the Bergman spaces of 
Dirichlet series.

Section \ref{sec:hankel} is devoted to the weighted multiplicative Hankel forms 
related to the Bergman space, defined by the formula
\begin{equation}
	\label{eq:mhankd} \varrho_d(a,b) = \sum_{m=1}^\infty \sum_{n=1}^\infty 
a_m b_n \frac{\varrho_{mn}}{d(mn)}, \qquad a, b \in \ell^2_d.
\end{equation}
In \eqref{eq:mhankd}, $d(k)$ denotes the number of divisors of the integer $k$, and $\ell^2_d$ 
denotes the corresponding weighted Hilbert space. Note that the divisor function $d(k)$ 
counts the number of times $\varrho_k$ appears in \eqref{eq:mhankd}. In the 
same way that the forms \eqref{eq:mhank} are realized as Hankel operators on 
the Hardy space $H^2(\mathbb{D}^\infty)$, the weighted forms \eqref{eq:mhankd} 
are naturally realized as (small) Hankel operators on the Bergman space of the 
infinite polydisc, $A^2(\mathbb{D}^\infty)$. Equipped with the inequalities 
from Sections \ref{sec:ineq} and \ref{sec:iter} we successfully obtain the 
Bergman space counterparts of results from 
\cite{BPSSV,Helson06,Helson10,OCS12}. 

In Section~\ref{sec:hankel} we will also point out a surprising property of 
multiplicative Hankel forms. We first observe that $A^2(\mathbb{D}^\infty)$ may 
be naturally isometrically embedded in the Hardy space 
$H^2(\mathbb{D}^\infty)$, since the same is true for $A^2(\mathbb{D})$ with 
respect to $H^2(\mathbb{D}^2)$. Then, we notice that this embedding lifts to the 
level of Hankel forms, giving us natural map taking weighted Hankel forms 
\eqref{eq:mhankd} to Hankel forms \eqref{eq:mhank}. The striking aspect is that 
this map preserves the singular numbers of the Hankel form, in particular 
preserving both the uniform and the Hilbert--Schmidt norm.

Finally, in Section \ref{sec:carleson} we come back to harmonic analysis on the 
Hardy spaces $H^p(\DD^\infty)$. We produce two counter-examples for Carleson 
measures, again pointing out phenomena that do not exist in finite dimension. 

\subsection*{Notation} We will use the notation $f(x)\lesssim g(x)$ if there is 
some constant $C>0$ such that $|f(x)|\leq C|g(x)|$ for all (appropriate) $x$. 
If $f(x)\lesssim g(x)$ and $g(x) \lesssim f(x)$, we write $f(x) \simeq g(x)$. 
As above, $(p_j)_{j\geq1}$ will denote the increasing sequence of prime numbers.

\section{Inequalities of Carleman and Weissler for Bergman spaces} 
\label{sec:ineq} 
\subsection{Preliminaries} Let $\alpha>1$ and $0< p<\infty$, and define the 
Bergman space $A^p_\alpha(\mathbb{D})$ as the space of analytic functions $f$ 
in the unit disc 
\[\mathbb{D} = \{z\,:\, |z|<1\}\]
that are finite with respect to the norm
\[\|f\|_{A^p_\alpha(\mathbb{D})} = \left(\int_{\mathbb{D}} |f(w)|^p \, 
(\alpha-1)(1-|w|^2)^{\alpha-2}\,dm(w)\right)^\frac{1}{p}.\]
Here $m$ denotes the Lebesgue area measure, normalized so that 
$m(\mathbb{D})=1$. It will be convenient to let $dm_\alpha(w) = 
(\alpha-1)(1-|w|)^{\alpha-2}\,dm(w)$ for $\alpha > 1$, and to let $m_1$ denote 
the normalized Lebesgue measure on the torus
\[\mathbb{T} = \{z\,:\, |z|=1\}.\]
The Hardy space $H^p(\mathbb{D})$ is defined as closure of analytic polynomials 
with respect to the norm
\[\|f\|_{H^p(\mathbb{D})} = \left(\int_{\mathbb{T}}|f(w)|^p 
\,dm_1(w)\right)^\frac{1}{p}.\]
The Hardy space $H^p(\mathbb{D})$ is the limit of $A^p_\alpha(\mathbb{D})$ as 
$\alpha \to 1^+$, in the sense that
\[\lim_{\alpha\to 1^+} \|f\|_{A^p_\alpha(\mathbb{D})} = 
\|f\|_{H^p(\mathbb{D})}\]
for every analytic polynomial $f$. We therefore let $A^p_1(\mathbb{D}) = 
H^p(\mathbb{D})$. Our main interest is in the distinguished case $\alpha=2$, 
when $m_\alpha = m$ is simply the normalized Lebesgue measure. Therefore we 
also let $A^p(\mathbb{D})=A^p_2(\mathbb{D})$. We will only require some basic 
properties of $A^p_\alpha(\mathbb{D})$ in what follows, and refer generally to 
the monographs \cite{DS04,HKZ00}.

Let $c_\alpha(j)$ denote the coefficients of the binomial series 
\begin{equation}
	\label{eq:binomseries} \frac{1}{(1-w)^\alpha} = \sum_{j=0}^\infty 
c_\alpha(j) w^j, \qquad c_\alpha(j) = \binom{j+\alpha-1}{j}. 
\end{equation}
It is evident from \eqref{eq:binomseries} that 
\begin{equation}
	\label{eq:sumconv} \sum_{j+k=l} c_\alpha(j)c_\beta(k) = 
c_{\alpha+\beta}(l).
\end{equation}
If $\alpha$ is an integer, then $c_\alpha(j)$ denotes the number of ways to 
write $j$ as a sum of $\alpha$ non-negative integers. Furthermore, if $f(w) = 
\sum_{j\geq0} a_j w^j$, then 
\begin{equation}
	\label{eq:A2anorm} \|f\|_{A^2_\alpha(\mathbb{D})} = 
\left(\sum_{j=0}^\infty \frac{|a_j|^2}{c_\alpha(j)}\right)^\frac{1}{2}. 
\end{equation}
Functions $f$ in $A^p_\alpha(\mathbb{D})$ satisfy for $w \in \mathbb{D}$ the 
sharp pointwise estimate 
\begin{equation}
	\label{eq:pest} |f(w)| \leq 
\frac{1}{(1-|w|^2)^{\alpha/p}}\|f\|_{A^p_\alpha(\mathbb{D})}. 
\end{equation}

For the sake of completeness, we will state and prove the results in this 
section for as general $\alpha>1$ as we are able, even though we will only make 
use of the results for $\alpha=2$ in the following sections.

\subsection{Contractive inclusions of Bergman spaces} It is well-known that, if 
$0<p\leq q$ and $\alpha,\beta\geq 1$, then $A^p_\alpha(\DD)$ embeds 
continuously into $A^q_\beta(\DD)$ if and only if $q/\beta\leq p/\alpha$ (see 
e.g. \cite[Exercise 2.27]{Zhu05}). By tensorization, this statement extends to 
the Bergman spaces on the polydiscs of finite dimension. However, in order for 
such embeddings to exist on the infinite polydisc, it is necessary that the 
inclusion map in one variable is contractive.

The first result of the type we are looking for was given by Carleman 
\cite{Car21}. For $f\in H^1(\DD)$ it holds that
\begin{equation}
	\label{eq:carlemanH1} \|f\|_{A^2(\mathbb{D})} = \|f\|_{A^2_2(\DD)}\leq 
\|f\|_{A^1_1(\DD)}=\|f\|_{H^1(\DD)}. 
\end{equation}

A modern and natural way to prove \eqref{eq:carlemanH1} can be found in 
\cite{Vukotic03}. First, it is easy to verify that
$$\|gh\|_{A^2(\mathbb{D})} \leq 
\|g\|_{H^2(\mathbb{D})}\|h\|_{H^2(\mathbb{D})},$$
for example by computing by coefficients. If $f$ is a non-vanishing function of 
$H^1(\DD)$, writing $f=gh$ with $g=h=f^{1/2}$ now leads to 
\eqref{eq:carlemanH1}. For a general function $f\in H^1(\DD)$, we first factor 
out the zeroes through a Blaschke product. This is possible by what seems to be 
a coincidence: multiplication by a Blaschke product decreases the norm on the 
left hand side of \eqref{eq:carlemanH1} but preserves the norm on the right 
hand side.

The ability to factor out zeroes and take roots implies that Carleman's 
inequality \eqref{eq:carlemanH1} holds for arbitrary $0<p<\infty$, 
$$\|f\|_{A^{2p}(\mathbb{D})}\leq\|f\|_{H^p(\mathbb{D})}.$$
 In \cite{Burbea87}, Burbea generalized Carleman's inequality, showing that for 
every $0<p<\infty$ and every non-negative integer $n$, it holds that
\begin{equation}
	\label{eq:burbea} \|f\|_{A^{p(1+n)}_{1+n}(\mathbb{D})} \leq 
\|f\|_{H^p(\mathbb{D})}. 
\end{equation}
Let 
\[\alpha_0 = \frac{1+\sqrt{17}}{4}=1.280776\ldots\]
We offer the following extension of Carleman's inequality. 
\begin{thm}
	\label{thm:generalcarleman} Let $\alpha\geq\alpha_0$ and $0<p<\infty$. 
For every $f\in A^p_\alpha(\DD)$, 
	\[\|f\|_{A^{p(\alpha+1)/\alpha}_{\alpha+1}(\DD)}\leq 
\|f\|_{A^p_\alpha(\DD)}.\]
	Moreover, if $\alpha>\alpha_0$, we have equality if and only if there 
exists constants $C\in\mathbb C$ and $\xi\in \DD$ such that 
	\[f(w)=\frac{C}{(1-\bar\xi w)^{2\alpha/p}}.\] 
\end{thm}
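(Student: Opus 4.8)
The plan is to recast the inequality as a statement about a single log-subharmonic function, thereby removing the dependence on $p$ altogether. Raising the desired inequality to the power $q=p(\alpha+1)/\alpha$ and writing $U=|f|^p$, so that $|f|^q=U^{(\alpha+1)/\alpha}$, the claim becomes
\begin{equation*}
\int_\DD U^{\frac{\alpha+1}{\alpha}}\,dm_{\alpha+1}\leq\left(\int_\DD U\,dm_\alpha\right)^{\frac{\alpha+1}{\alpha}}.\tag{$\star$}
\end{equation*}
Since $\log U=p\log|f|$ is subharmonic for every analytic $f$ and every $p>0$, it suffices to establish $(\star)$ for an arbitrary log-subharmonic $U\geq 0$ with $U\in L^1(m_\alpha)$; this one inequality covers all exponents $p$ simultaneously. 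This is the point that sidesteps the usual reductions. In the Hardy-to-Bergman inequality \eqref{eq:carlemanH1} one factors out zeros through a Blaschke product because multiplication by such a product decreases the Bergman norm while preserving the Hardy norm; here both sides are Bergman norms, so that factorization would decrease \emph{both} and cannot be used. Passing to log-subharmonic $U$ avoids root extraction and Blaschke factorization entirely.

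Next I would reduce $(\star)$ to its extremal case and then prove it as a sharp Sobolev inequality. Writing $U=e^\varphi$ with $\varphi$ subharmonic, the first step is to argue that the positive Riesz mass of $\varphi$ — the contribution of the zeros of $f$ — only lowers the left side relative to the right, so that it is enough to treat $\varphi$ harmonic, i.e.\ $U=|h|$ for a zero-free analytic $h$. With $\varphi$ harmonic, $(\star)$ becomes a logarithmic-Sobolev/Moser--Trudinger type comparison between the two probability measures $m_{\alpha+1}$ and $m_\alpha$ (both normalized to total mass $1$), and here I would invoke the sharp Sobolev inequalities of Beckner \cite{Beckner93} and Bandyopadhyay \cite{Bandy09}, in a weighted form adapted to the densities $(\alpha-1)(1-|w|^2)^{\alpha-2}$. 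The content is that for the borderline exponent $(\alpha+1)/\alpha$ and these particular weights the sharp constant equals exactly $1$, which is what turns the Sobolev inequality into the contractive statement $(\star)$.

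The main obstacle, and the source of the threshold $\alpha_0$, is showing that the sharp constant is $1$ precisely when $\alpha\geq\alpha_0$. I expect $\alpha_0$ to emerge from a second-variation (spectral) analysis at the conjectured extremals $U=|1-\bar\xi w|^{-2\alpha}$: perturbing such an extremal and expanding $(\star)$ to second order yields, mode by mode in the Taylor expansion, a quadratic form in $\alpha$ whose nonpositivity for every mode is governed by the condition $2\alpha^2-\alpha-2\geq 0$, whose positive root is exactly $\alpha_0=(1+\sqrt{17})/4$. Thus $\alpha\geq\alpha_0$ is both the natural necessary condition produced by this local analysis and the hypothesis under which the global Sobolev inequality holds with constant $1$; delivering the global inequality from the local spectral condition is the delicate part.

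Finally, for the equality statement when $\alpha>\alpha_0$, I would trace back through the two reductions. Equality in $(\star)$ forces the Riesz mass of $\log U$ to vanish, so $f$ is zero-free and $\varphi=\log U$ is harmonic; then the equality case of the underlying Sobolev inequality, together with the conformal covariance of the problem (the automorphisms of $\DD$ act transitively and preserve both sides, with the constants serving as the extremals centred at the origin), pins down $U=|C|^p\,|1-\bar\xi w|^{-2\alpha}$ for some $\xi\in\DD$, that is, $f(w)=C(1-\bar\xi w)^{-2\alpha/p}$. The strict inequality $\alpha>\alpha_0$ is needed precisely to exclude the degenerate critical directions that appear when $\alpha=\alpha_0$.
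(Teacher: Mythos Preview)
Your proposal is a plan rather than a proof, and the two substantive steps you defer are exactly the places where the argument lives.

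First, the reduction to the zero-free (harmonic $\varphi$) case is not justified, and in fact you undercut it yourself. You correctly observe that Blaschke factorization fails here because dividing out a zero decreases \emph{both} Bergman norms; yet your ``first step'' is to argue that the Riesz mass of $\log U$ ``only lowers the left side relative to the right'', which is the same monotonicity claim in disguise. There is no reason to expect such monotonicity, and you give none. The paper does \emph{not} reduce to zero-free $f$. Instead it sets $u(w)=|f(w)|^{p/2}(1-|w|^2)^{\alpha/2}$ and uses an integral identity of Carlen type (Lemma~\ref{lem:carlenid}),
\[
\int_\DD |\nabla_{\operatorname{H}} u|^2\,d\mu=\frac{\alpha}{2}\int_\DD u^2\,d\mu,
\]
valid for all analytic $f$, zeros included. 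This identity is what replaces the Blaschke step: it lets one enlarge the competition from functions of the special form $u=|f|^{p/2}(1-|w|^2)^{\alpha/2}$ to all of $W^{1,2}(\DD)$, where the variational machinery applies.

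Second, you never prove the sharp Sobolev inequality you want; you only sketch a second-variation computation at the conjectured extremals and then concede that ``delivering the global inequality from the local spectral condition is the delicate part''. That \emph{is} the theorem. The paper handles this by a genuine variational argument: existence of minimizers via Rellich--Kondrakov, derivation of the Euler--Lagrange equation
\[
\Delta_{\operatorname{H}} v-\frac{(\alpha-2)\alpha}{4}v+v^{\frac{2q}{p}-1}=0,
\]
verification that $v_0(w)=c(1-|w|^2)^{\alpha/2}$ solves it, and then---the decisive input---the uniqueness theorem of Mancini--Sandeep \cite[Thm.~1.3]{MS08}, which says the positive $W^{1,2}$ solution is unique up to M\"obius automorphism precisely when $\alpha(2-\alpha)/4<\alpha(\alpha+1)/(2\alpha+1)^2$. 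Factoring, this condition is $(2\alpha-1)(2\alpha^2-\alpha-2)>0$, so for $\alpha>1$ it reduces to $2\alpha^2-\alpha-2>0$, i.e.\ $\alpha>\alpha_0$. Your quadratic is correct, but it enters as a \emph{global} uniqueness hypothesis from \cite{MS08}, not as the output of a local spectral calculation, and that is what closes the gap you left open.
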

 Let us give two corollaries. The first is mainly decorative, but it 
illustrates that \eqref{eq:burbea} gets weaker as $n$ increases. 
\begin{cor}
	\label{cor:chain} Let $f \in H^1(\mathbb{D})=A^1_1(\mathbb{D})$. Then
	\[\|f\|_{A^1_1(\DD)} \geq \|f\|_{A^2_2(\DD)} \geq \|f\|_{A^3_3(\DD)} 
\geq \|f\|_{A^4_4(\DD)} \geq \cdots \]
\end{cor}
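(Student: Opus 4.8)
The plan is to recognize every link in the chain as a special case of a contractive inequality already at our disposal, so that the corollary follows by a short induction together with the case $n=1$ treated separately.

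First I would handle the base case. Since $\alpha_0 = (1+\sqrt{17})/4 > 1$, Theorem~\ref{thm:generalcarleman} does not apply at $\alpha = 1$, so the very first inequality $\|f\|_{A^2_2(\DD)} \leq \|f\|_{A^1_1(\DD)}$ is \emph{not} covered by it. However, this is precisely the classical Carleman inequality \eqref{eq:carlemanH1}, which also guarantees that $f \in A^2_2(\DD)$ whenever $f \in A^1_1(\DD) = H^1(\DD)$. This gives the first link and places $f$ in the space needed to start the induction.

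For the remaining links I would specialize Theorem~\ref{thm:generalcarleman} to $\alpha = p = n$ with $n \geq 2$. Since $n \geq 2 > \alpha_0$, the hypothesis $\alpha \geq \alpha_0$ is satisfied, and the exponents collapse: the target norm index is $p(\alpha+1)/\alpha = n(n+1)/n = n+1$ and the target weight is $\alpha + 1 = n+1$. Thus the theorem reads
\[\|f\|_{A^{n+1}_{n+1}(\DD)} \leq \|f\|_{A^{n}_{n}(\DD)}, \qquad n \geq 2,\]
which is exactly the $n$-th link of the chain. Running this inductively from $n=2$ onward, using at each stage that the previous inequality has already placed $f$ in $A^n_n(\DD)$ with finite norm, produces the entire tail $\|f\|_{A^2_2(\DD)} \geq \|f\|_{A^3_3(\DD)} \geq \cdots$.

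There is essentially no obstacle here beyond bookkeeping: the only point requiring genuine care is that Theorem~\ref{thm:generalcarleman} carries the threshold $\alpha_0 > 1$, so one cannot obtain the first inequality from it and must fall back on the classical result \eqref{eq:carlemanH1}. Concatenating the base case with the inductive tail yields the stated decreasing chain.
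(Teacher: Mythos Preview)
Your argument is correct and is exactly the intended derivation: the paper does not write out a proof of this corollary (calling it ``mainly decorative''), but the chain is meant to be read off from Carleman's inequality \eqref{eq:carlemanH1} for the first link and from Theorem~\ref{thm:generalcarleman} with $\alpha=p=n$, $n\geq 2$, for the rest, just as you do.
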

We also have the following corollary, which will be important in the next 
section. 
\begin{cor}
	\label{cor:HLin} Let $p = 2/(1+n/2)$ for a non-negative integer $n$ and 
suppose that $f(w) = \sum_{j\geq0} a_j w^j$ is in $A^p(\mathbb{D})$. Then
	\[\|f\|_{A^2_{n+2}(\mathbb{D})} = \left(\sum_{j=0}^\infty 
\frac{|a_j|^2}{c_{n+2}(j)}\right)^\frac{1}{2} \leq \|f\|_{A^p(\mathbb{D})}.\]
\end{cor}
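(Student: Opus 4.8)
The plan is to read off the asserted equality from the coefficient formula \eqref{eq:A2anorm} and to obtain the inequality by iterating Theorem~\ref{thm:generalcarleman} a fixed number of times.

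First I would note that the displayed identity is nothing but \eqref{eq:A2anorm} applied with $\alpha = n+2$, so that part requires no work. It remains to establish $\|f\|_{A^2_{n+2}(\DD)} \leq \|f\|_{A^p(\DD)}$, where $p = 2/(1+n/2) = 4/(n+2)$ and $A^p(\DD) = A^p_2(\DD)$ by definition. Thus the task is to pass from the source pair (exponent $p$, weight $2$) to the target pair (exponent $2$, weight $n+2$).

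Next I would set up the iteration. Writing $q_2 = p$ and defining recursively $q_{k+1} = q_k\,(k+1)/k$, I would apply Theorem~\ref{thm:generalcarleman} once for each weight $\alpha = k$ with $k = 2, 3, \ldots, n+1$. Each such $\alpha$ is at least $2$, hence exceeds $\alpha_0$, so the hypothesis of the theorem is met at every step; and since $f \in A^p_2(\DD)$ to begin with, each application places $f$ in the source space required for the next. This yields the chain
\[\|f\|_{A^{q_2}_2(\DD)} \geq \|f\|_{A^{q_3}_3(\DD)} \geq \cdots \geq \|f\|_{A^{q_{n+2}}_{n+2}(\DD)}.\]

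Finally I would compute the exit exponent. The recursion telescopes,
\[q_{n+2} = p \prod_{k=2}^{n+1} \frac{k+1}{k} = p\cdot\frac{n+2}{2} = \frac{4}{n+2}\cdot\frac{n+2}{2} = 2,\]
so the last term of the chain is exactly $\|f\|_{A^2_{n+2}(\DD)}$, which gives the claimed bound. The case $n = 0$ is degenerate: then $p = 2$ and both sides already equal $\|f\|_{A^2(\DD)}$, with no iteration needed. I do not expect any genuine analytic obstacle here; the only points demanding care are purely arithmetic, namely confirming that $\alpha \geq \alpha_0$ holds throughout (guaranteed because the smallest weight used is $2$) and that the telescoping product lands the exponent precisely on $2$ rather than a neighboring value.
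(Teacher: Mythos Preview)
Your proof is correct and follows exactly the approach of the paper, which states simply that the result ``follows from $n$ successive applications of Theorem~\ref{thm:generalcarleman}, starting from $p=2/(1+n/2)$ and $\alpha=2$.'' You have merely made the telescoping of exponents and the verification $\alpha \geq \alpha_0$ explicit.
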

\begin{proof}
	This follows from $n$ successive applications of 
Theorem~\ref{thm:generalcarleman}, starting from $p=2/(1+n/2)$ and $\alpha=2$.
\end{proof}

We now begin the proof of Theorem \ref{thm:generalcarleman}. A version of it 
was announced in \cite{Bandy09}\footnote{Theorem~3.2 in \cite{Bandy09} is stated 
for $kq>2$, but there seems to be a mistake in the proof of uniqueness on 
p.~1083. The argument in its entirety seems to apply only when $kq>3$.}, 
following a scheme designed in \cite{Bodmann04}. Observe also that an analogous 
result in the Fock space was proved by Carlen \cite{Carlen91} using a 
logarithmic Sobolev inequality. We follow the general strategy of 
\cite{Bandy09,Bodmann04}, replacing \cite[Sec.~5]{Bandy09} with a result from 
\cite{MS08}. We include many additional details in an attempt to make the scheme 
used in \cite{Bandy09,Bodmann04,Carlen91} available to a wider audience.

We shall use two structures on the disk, the Euclidean and the hyperbolic. The 
usual gradient and Laplacian of $u$ will be denoted by $\nabla u $ and $\Delta 
u$, while the hyperbolic gradient and the hyperbolic Laplacian are denoted by 
$\nabla_{\operatorname{H}}\, u$ and $\Delta_{\operatorname{H}}\, u$. They are 
connected by the following formulas: 
\[\nabla_{\operatorname{H}}\, u(w)=\left(\frac{1-|w|^2}{2}\right)\nabla 
u(w)\qquad \text{and} \qquad\Delta_{\operatorname{H}}\, u(w) 
=\left(\frac{1-|w|^2}{2}\right)^2\Delta u(w).\]
We shall also use the M\"obius invariant measure
\[d\mu(w)=\frac{dm(w)}{(1-|w|^2)^2}.\] 
We begin with an integral identity (essentially \cite[Thm.~3.1]{Bandy09}). An 
analogous result was proven for the Fock space in \cite{Carlen91}, and a 
similar result also appears in \cite{Bodmann04}. 
\begin{lem}
	\label{lem:carlenid} Let $p>0$ and $\beta>1/2$. For an analytic 
function $f$ in $\overline{\mathbb{D}}$, set $u(w)=|f(w)|^p (1-|w|^2)^{\beta}$. 
Then
\[\int_{\mathbb{D}} 
|\nabla_{\operatorname{H}}\, u(w)|^2 d\mu(w) = \frac \beta 2\int_{\mathbb{D}} 
|u(w)|^2 d\mu(w).\]
\end{lem}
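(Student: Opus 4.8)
The plan is to prove the stated identity by direct computation, exploiting the fact that $u(w)=|f(w)|^p(1-|w|^2)^\beta$ has a convenient structure as the product of a (modulus of an) analytic function raised to a power and a radial weight. The key observation is that $\log|f|$ is harmonic away from the zeros of $f$, which linearizes the gradient computation considerably. Writing $u = e^{p\log|f|}(1-|w|^2)^\beta$, I would first compute $\nabla u$ and $\nabla_{\operatorname{H}} u = \tfrac{1-|w|^2}{2}\nabla u$, and then express $|\nabla_{\operatorname{H}} u|^2$ in terms of $u^2$ times the squared hyperbolic gradient of the exponent $\varphi(w) = p\log|f(w)| + \beta\log(1-|w|^2)$. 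Since $|\nabla_{\operatorname{H}} u|^2 = u^2 |\nabla_{\operatorname{H}} \varphi|^2$, the left-hand side becomes $\int_{\mathbb D} u^2 |\nabla_{\operatorname{H}}\varphi|^2\,d\mu$, and the task reduces to controlling $|\nabla_{\operatorname{H}}\varphi|^2$.

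Next I would exploit a Green's-type identity to move derivatives off $\varphi$. The natural route is integration by parts in the form
\[
\int_{\mathbb D} u^2 |\nabla_{\operatorname{H}}\varphi|^2\,d\mu
= -\frac12\int_{\mathbb D} u^2\, \Delta_{\operatorname{H}}\!\left(\tfrac{\text{something}}{\,}\right)d\mu,
\]
but more concretely: since $\nabla_{\operatorname{H}}(u^2) = 2u\,\nabla_{\operatorname{H}}u = 2u^2\,\nabla_{\operatorname{H}}\varphi$, one has $u^2\nabla_{\operatorname{H}}\varphi = \tfrac12\nabla_{\operatorname{H}}(u^2)$, so the left-hand side equals $\tfrac12\int_{\mathbb D}\nabla_{\operatorname{H}}(u^2)\cdot\nabla_{\operatorname{H}}\varphi\,d\mu$. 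Integrating by parts with respect to the Möbius-invariant measure $d\mu$ (using the self-adjointness of $\Delta_{\operatorname{H}}$ against $d\mu$) transfers the derivative onto $\varphi$, giving $-\tfrac12\int_{\mathbb D} u^2\,\Delta_{\operatorname{H}}\varphi\,d\mu$. The crucial simplification is now computing $\Delta_{\operatorname{H}}\varphi$: the harmonic part $p\log|f|$ contributes nothing (away from zeros, where $\Delta\log|f|=0$), while the radial part gives $\Delta_{\operatorname{H}}\big(\beta\log(1-|w|^2)\big)$. A short calculation with $\Delta_{\operatorname{H}} = \tfrac{(1-|w|^2)^2}{4}\Delta$ shows $\Delta_{\operatorname{H}}\log(1-|w|^2) = -1$, so $\Delta_{\operatorname{H}}\varphi = -\beta$, yielding exactly $\tfrac{\beta}{2}\int_{\mathbb D} u^2\,d\mu$.

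The main obstacle is the singularity of $\log|f|$ at the zeros of $f$. There the identity $\Delta\log|f|=0$ fails — instead $\Delta\log|f|$ is a sum of Dirac masses at the zeros (with positive weight), so a naive application would introduce spurious negative contributions and destroy the clean integration by parts. I would handle this either by first assuming $f$ is zero-free on $\overline{\mathbb D}$ (where the computation is rigorous and the boundary terms from integration by parts vanish because $u$ and its derivatives are well-behaved and the weight $(1-|w|^2)^\beta$ with $\beta>1/2$ ensures adequate decay), and then arguing that the general case follows by an approximation or limiting argument, excising small disks around each zero and verifying that the boundary integrals over the excised circles tend to zero. The hypothesis $\beta>1/2$ should be precisely what guarantees integrability of $u^2\,d\mu = |f|^{2p}(1-|w|^2)^{2\beta-2}\,dm$ near the boundary and the vanishing of the boundary terms; I would track this threshold carefully, as it is the one place where the value of $\beta$ enters the analysis rather than appearing merely as a constant multiplier.
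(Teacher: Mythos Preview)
Your approach is correct and takes a genuinely different route from the paper. The paper integrates by parts once to obtain $\int_{\mathbb D}|\nabla_{\operatorname{H}} u|^2\,d\mu = -\tfrac14\int_{\mathbb D} u\,\Delta u\,dm$, and then computes $\Delta u$ directly from the product $|f|^p(1-|w|^2)^\beta$ via a fairly lengthy bare-hands calculation, eventually obtaining the pointwise identity
\[
-\tfrac14\, u\,\Delta u \;=\; \frac{\beta\, u^2}{(1-|w|^2)^2} \;-\; \frac{|\nabla_{\operatorname{H}} u|^2}{(1-|w|^2)^2},
\]
which upon integrating against $dm$ gives the result. Your route through $u=e^{\varphi}$, the identity $|\nabla_{\operatorname{H}} u|^2 = \tfrac12\,\nabla_{\operatorname{H}}(u^2)\cdot\nabla_{\operatorname{H}}\varphi$, and a second integration by parts landing on $\Delta_{\operatorname{H}}\varphi$ is cleaner: the harmonicity of $\log|f|$ kills the $f$-dependent part of $\Delta_{\operatorname{H}}\varphi$ outright, so everything reduces to the elementary computation $\Delta_{\operatorname{H}}\log(1-|w|^2)=-1$, and no explicit Laplacian of $u$ is ever needed. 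Both arguments use the hypothesis $\beta>1/2$ in the same way, to kill boundary terms on $\partial\mathbb D$. The one cost of your approach is that $\varphi$ is singular both at the zeros of $f$ and on $\partial\mathbb D$, so the integration by parts onto $\Delta_{\operatorname{H}}\varphi$ genuinely requires the excision argument you outline (the excised boundary contributions are $O(\epsilon^{2pm})$ at a zero of order $m$, hence vanish); the paper's $u$ is at least continuous everywhere, so its single integration by parts is marginally less delicate, and it simply computes on $\{f\neq 0\}$ without further comment.
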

\begin{proof}
	Integrating by parts gives 
	\begin{equation}
		\label{eq:carlenid1} \int_{\DD}|\nabla_{\operatorname{H}}\, 
u|^2d\mu=\frac 14\int_{\mathbb{D}} |\nabla u|^2 dm= 
-\frac{1}4\int_{\mathbb{D}}u \Delta u dm. 
	\end{equation}
	It follows from the assumption $\beta > 1/2$ that boundary terms do not 
appear here. We compute the Laplacian now. At any point where $f$ does not 
vanish, we can write 
	\begin{eqnarray*}
		\frac{
		\partial u}{\bar
		\partial w}=\frac p2 
|f|^{p-2}f\overline{f'}(1-|w|^2)^\beta-\beta w |f|^p (1-|w|^2)^{\beta-1},
	\end{eqnarray*}
	so that 
	\begin{eqnarray*}
		\frac{
		\partial^2 u}{
		\partial w\bar
		\partial 
w}&=&\frac{p^2}4|f'|^2|f|^{p-2}(1-|w|^2)^{\beta}-\beta\frac p2 |f|^{p-2} 
f\overline{f'}\bar w(1-|w|^2)^{\beta-1}\\
		&&\qquad-\,\beta |f|^p (1-|w|^2)^{\beta-1}-\frac{\beta p}{2} 
|f|^{p-2}f'\bar f(1-|w|^2)^{\beta-1}\\
		&&\qquad\qquad+\,\beta(\beta -1)|w|^2|f|^p(1-|w|^2)^{\beta-2}. 
	\end{eqnarray*}
	We see that 
	\begin{eqnarray*}
		-u\Delta u&=&-p^2 |f'|^2 |f|^{2p-2} (1-|w|^2)^{2\beta}+2\beta p 
|f|^{2p-2} f\overline{f'} \bar w (1-|w|^2)^{2\beta-1}\\
		&&\qquad+\,4\beta |f|^{2p}(1-|w|^2)^{2\beta-2}+2\beta p 
|f|^{2p-2} f'\bar f w(1-|w|^2)^{2\beta-1}\\
		&&\qquad\qquad-\,4\beta^2 |w|^2 |f|^{2p}(1-|w|^2)^{2\beta-2}. 
	\end{eqnarray*}
	Coming back to the expression of $\partial u /\bar\partial w$, we find 
that
	\[-\frac 14u\Delta u = \beta \frac{u^2}{(1-|w|^2)^2}-\left|\frac{
		\partial u}{\bar
		\partial w}\right|^2=\beta 
\frac{u^2}{(1-|w|^2)^2}-\frac{|\nabla_{\operatorname{H}}\, u|^2}{(1-|w|^2)^2}. 
\]
	Integrating with respect to $dm$ and using \eqref{eq:carlenid1} gives 
the result. 
\end{proof}

\begin{proof}
	[Proof of Theorem~\ref{thm:generalcarleman}] We set 
$q=p(\alpha+1)/\alpha$, $A=(\alpha-2)/(\alpha-1)$ and $B=1/(\alpha-1)$, so that 
$A+B=1$. We want to find the infimum of 
	\[(\alpha-1)\int_{\DD} |f(w)|^p (1-|w|^2)^\alpha d\mu(w)\]
	under the constraint
	\[\alpha\int_{\DD} |f(w)|^q (1-|w|^2)^{\alpha+1}d\mu(w)=1.\]
	Equivalently, using Lemma \ref{lem:carlenid} with 
	\begin{equation} \label{eq:uf}
		u(w)=|f(w)|^{p/2}(1-|w|^2)^{\alpha/2},
	\end{equation} we want to find the infimum of 
	\begin{equation}
		\label{eq:minimizer1} 
A\int_{\DD}|u(w)|^2d\mu(w)+\frac{4B}{\alpha}\int_{\DD}|\nabla_{\operatorname{H}}
\, u(w)|^2d\mu(w) 
	\end{equation}
	under the constraint 
	\begin{equation}
		\label{eq:minimizer2} \alpha\int_{\DD}|u(w)|^{2q/p}d\mu(w)=1. 
	\end{equation}
	We now solve the latter minimization problem for real-valued $u$ 
belonging to the Sobolev space $W^{1,2}(\DD)$, i.e. functions $u$ such that
	$$\int_{\DD}|\nabla_{\operatorname{H}}\, u(w)|^2d\mu(w) < \infty.$$
	 By the well-known inequality for the bottom of the spectrum of the 
Laplace--Beltrami operator (see e.g.~\cite{MS08}) we know that for any $u\in 
W^{1,2}(\DD)$,
	\[\int_{\DD}|u(w)|^2 d\mu(w)\leq 
4\int_{\DD}|\nabla_{\operatorname{H}}\, u(w)|^2d\mu(w).\]
	Hence
	\[N(u)=\left(A\int_{\DD} |u(w)|^2d\mu(w)+\frac{4B}\alpha 
\int_{\DD}|\nabla_{\operatorname{H}}\, u(w)|^2d\mu(w)\right)^{1/2}\]
	 is a norm on $W^{1,2}(\DD)$ equivalent to the usual norm, since 
$A>-B/\alpha$. By the Rellich--Kondrakov theorem \cite[Ch.~11]{leoni2009first}, which 
asserts that the inclusion map from $W^{1,2}(\DD)$ into $L^s(\DD,d\mu)$ is 
compact for any finite $s$, the problem of finding the infimum of 
\eqref{eq:minimizer1} for $u\in W^{1,2}(\DD)$ satisfying \eqref{eq:minimizer2} 
is well-posed. Moreover, this also ensures that minimizers do exist. Indeed, 
let us take any sequence $(u_n)$ realizing the infimum. This sequence is bounded 
in the reflexive space $W^{1,2}(\DD)$, so we may assume that it converges weakly 
to some $u\in W^{1,2}(\DD)$. Then $(u_n)$ converges to $u$ in 
$L^{2q/p}(\DD,d\mu)$ so that $\|u\|^{2q/p}_{L^{2q/p}}=1/\alpha$
	 whereas $N(u)\leq \liminf_n N(u_n)$.
	
	Next we compute the Euler--Lagrange equation corresponding to the 
constrained variational problem given by \eqref{eq:minimizer1} and 
\eqref{eq:minimizer2}. By standard arguments, we find that any local minimum of the problem 
is a weak solution of
	\begin{equation}
		\label{eq:mineq} Au-\frac{4B}\alpha \Delta_{\operatorname{H}}\, 
u=\lambda u^{\frac{2q}p-1} 
	\end{equation}
	for some $\lambda \in \mathbb{R}$. By Lemma~\ref{lem:reg} below, there 
are minimizers that are actually $C^2(\mathbb{D})$. Multiplying by $u$ and integrating 
with respect to $\mu$, we find from \eqref{eq:carlenid1} that $\lambda>0$. We 
now rescale \eqref{eq:mineq} by setting $u=\kappa v$ with
	\[\kappa^{2q/p-2}=\frac{4B}{\alpha\lambda}.\]
	Then $v\in W^{1,2}(\DD)\cap C^2(\mathbb{D})$ satisfies 
	\begin{equation}
		\label{eq:minimizer3} \Delta_{\operatorname{H}}\, 
v-\frac{(\alpha-2)\alpha}4v+v^{\frac{2q}p-1}=0. 
	\end{equation}
	We now investigate \eqref{eq:mineq} for our candidate 
solution $u_0(w)=(1-|w|^2)^{\alpha/2}$. Since
	\[\Delta_{\operatorname{H}}\, u_0(w)=-\frac\alpha 
2(1-|w|^2)^{\alpha/2}\left(1-\frac\alpha2|w|^2\right)\]
	we have that
	\[Au_0-\frac{4B}\alpha \Delta_{\operatorname{H}} 
u_0=\frac\alpha{\alpha-1}(1-|w|^2)^{\frac\alpha 2+1}=\lambda_0 
u_0^{\frac{2q}{p}-1},\]
	where $\lambda_0=\alpha/(\alpha-1)$. Hence, if we let $u_0=\kappa_0 
v_0$ with 
	\[\kappa_0^{2q/p-2}=\frac{4B}{\alpha\lambda_0},\]
	then $v_0\in W^{1,2}(\DD)$ is a solution of \eqref{eq:minimizer3}. 
However, by \cite[Thm.~1.3]{MS08} we know that the solution of 
\eqref{eq:minimizer3} is unique up to a M\"obius transformation, as long as 
$$\frac{\alpha(2-\alpha)}4<\frac{4q}{p\left(\frac {2q}p+2\right)^2}.$$ 
	Replacing $q/p$ by its value, we find that this inequality is satisfied 
if and only if $\alpha>\alpha_0$. Both the Euler--Lagrange equation and our 
constraint problem are invariant under M\"obius transformations, so we have 
found all minimizers. Coming back to analytic functions via \eqref{eq:uf}, we 
have shown that we have equality if and only if there exists $\xi\in\DD$ and 
$\widetilde{C}\in\mathbb R$ such that
	\[|f(w)|^{p/2} = \widetilde{C}\,\left|1-\left|\frac{\xi-w}{1-\bar \xi 
w}\right|^2\right|^{\alpha/2}\big|1-|w|^2\big|^{-\alpha/2} = \widetilde{C}\, 
\frac{\left(1-|\xi|^2\right)^{\alpha/2}}{\left|1-\bar\xi w\right|^{\alpha}}.\]
	This shows that $f$ has to be a multiple of $(1-\bar\xi 
w)^{-2\alpha/p}$ for some $\xi\in\mathbb{D}$. Finally, the assertion of the 
theorem for $\alpha=\alpha_0$ is obtained by taking the limit as $\alpha \to 
\alpha_0^+$.
\end{proof}
The following is the regularity result that was used in the proof of the previous theorem. 
\begin{lem} \label{lem:reg}
There are minimizers of the variational constrained variational problem given 
by \eqref{eq:minimizer1} and \eqref{eq:minimizer2} that are $C^2$ smooth in $\mathbb{D}$.
\end{lem}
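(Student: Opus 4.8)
The plan is to treat the Euler--Lagrange equation \eqref{eq:mineq} as a semilinear elliptic equation and to upgrade the regularity of a minimizer by a standard interior bootstrap, exploiting that the underlying domain is two-dimensional. First I would reduce to a nonnegative minimizer: since $|\nabla_{\operatorname{H}}\,|u|\,| = |\nabla_{\operatorname{H}}\, u|$ almost everywhere and both \eqref{eq:minimizer1} and the constraint \eqref{eq:minimizer2} depend only on $|u|$, the function $|u|$ is again a minimizer, so we may assume $u \geq 0$. Fix such a $u$; as recorded in the proof above it is a weak solution of \eqref{eq:mineq} with $\lambda > 0$ and exponent $2q/p - 1 = 1 + 2/\alpha =: \gamma > 1$.

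Next I would rewrite \eqref{eq:mineq} in Euclidean terms. Using $\nabla_{\operatorname{H}}\, u = \tfrac{1-|w|^2}{2}\nabla u$ and $d\mu = dm/(1-|w|^2)^2$, the weak form of \eqref{eq:mineq} becomes, after cancelling the conformal factors in the gradient term,
\[
-\Delta u = \frac{\alpha}{B}\,\frac{\lambda\, u^{\gamma} - A\,u}{(1-|w|^2)^2} \quad \text{in } \DD.
\]
Since regularity is a local, interior matter, fix a compact $K \subset\subset \DD$; there $(1-|w|^2)^{-2}$ is smooth and bounded both above and below, so on $K$ we have a semilinear equation $-\Delta u = g(\,\cdot\,,u)$ with smooth coefficients and nonlinearity behaving like $u^{\gamma}$.

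Then comes the bootstrap, which is where the two-dimensionality is decisive. We start from $u \in W^{1,2}_{\mathrm{loc}}(\DD)$. In dimension two the Sobolev embedding gives $u \in L^s_{\mathrm{loc}}$ for every finite $s$, hence $u^{\gamma} \in L^s_{\mathrm{loc}}$ for every finite $s$, and so the right-hand side lies in $L^s_{\mathrm{loc}}$ for all finite $s$. Interior Calder\'on--Zygmund estimates then yield $u \in W^{2,s}_{\mathrm{loc}}$ for every finite $s$, and Morrey's embedding $W^{2,s} \hookrightarrow C^{1,\theta}$ (valid for $s > 2$ in the plane) gives $u \in C^{1,\theta}_{\mathrm{loc}}(\DD)$ for some $\theta \in (0,1)$. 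In particular $u$ is locally bounded and H\"older continuous, so $u^{\gamma}$ is locally H\"older continuous (here $\gamma > 1$ makes $t \mapsto t^{\gamma}$ Lipschitz on bounded subsets of $[0,\infty)$), and the right-hand side $g(\,\cdot\,,u)$ is locally H\"older. Interior Schauder estimates finally upgrade this to $u \in C^{2,\theta}_{\mathrm{loc}}(\DD)$, in particular $u \in C^2(\DD)$, which is the claim.

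The only genuine subtlety, and the point I would be most careful about, is that in two dimensions $W^{1,2}$ does not embed into $L^\infty$, so one cannot obtain boundedness of $u$ directly; the device above is to run the $L^s$-bootstrap through Calder\'on--Zygmund theory until the Morrey threshold $s > 2$ is crossed, after which boundedness and H\"older continuity are automatic and Schauder theory takes over. One should also note that, because $\gamma = 1 + 2/\alpha$ is generally not an integer and may be smaller than $2$, the nonlinearity $t \mapsto t^{\gamma}$ is only $C^1$ at the origin, so one cannot hope for more than $C^{2,\theta}$ by this method; but $C^2$ is all that is needed, and the argument is insensitive to where $u$ vanishes since only H\"older continuity of $u^{\gamma}$ is used.
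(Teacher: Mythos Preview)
Your argument is correct and is a genuinely different route from the paper's. The paper first passes to a \emph{radially symmetric decreasing} minimizer via the Polya--Szeg\H{o} inequality; this immediately gives $u$ bounded (its maximum is $u(0)$), so the right-hand side $F(u)/(1-|w|^2)^2$ of the Euclidean Poisson equation is locally bounded, and regularity of $u$ is then read off from an explicit Green's kernel with a good pointwise estimate taken from \cite{andersson1985solution}. Your approach instead runs a standard interior elliptic bootstrap: reduce to $u\ge 0$, use the two-dimensional Sobolev embedding $W^{1,2}\hookrightarrow L^s$ for every finite $s$ to place the nonlinear term in every $L^s_{\mathrm{loc}}$, apply Calder\'on--Zygmund to reach $W^{2,s}_{\mathrm{loc}}$, cross the Morrey threshold to get $C^{1,\theta}_{\mathrm{loc}}$, and finish with Schauder. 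The paper's method exploits the radial symmetry of the variational problem and is somewhat more hands-on (the explicit kernel estimate does the work), while your method is symmetry-free and relies only on textbook elliptic theory; in particular it would apply verbatim to any minimizer, not just the rearranged one. Both yield interior $C^{2,\theta}$, which is all that is needed downstream.
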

\begin{proof}
Let $u$ be a minimizer. Then it is weak solution of the Euler-Lagrange equation 
\eqref{eq:mineq}. We also know that 
$u \in L^{2q/p}(\mathbb{D}, d\mu)$. Since the radial rearrangement decreases 
the Dirichlet norm (by the Polya--Szeg\"o inequality 
\cite[Thm.~16.17]{leoni2009first}) there is a minimizer $u$ that is 
positive, radially symmetric and decreasing. Therefore $F(u)$ is bounded in the
unit disk, where
$$F(u):=  \frac{\alpha}{4B} \big( Au - \lambda u^{\frac{2q}{p}-1} \big)$$ 
Consider any solution $v$ to the Poisson equation:
\[
\Delta v(z) = \frac{F(u(z))}{(1-|z|^2)^2} ,
\]
then $u -v$ satisfies $\Delta(u-v) = 0$ weakly. Therefore $u = v + h $ where $h$ 
is an harmonic function.
One explicit solution to the Poisson equation is given by
$$
v(z) =\int_{\mathbb D}K(z,w)\frac{F(u(w))}{(1-|w|^2)^2} \,dm(w)
$$
where
$$
K(z,w)=\frac{1}{2\pi}\left\{\log
\left\vert\frac{w-z}{1-\overline{w}z}\right\vert^2+\frac{
(1-|w|^2)(1-|z|^2)}
{|1-\overline{w}z|^2}+\vert
z\vert^2\left(\frac{1-\vert w\vert^2}{\vert
1-\overline{w}z\vert}\right)^2\right\}.
$$
It was shown in \cite{andersson1985solution} that $K(z,w)$ satisfies the estimate 
$$
|K(z,w)|\lesssim
\frac{(1-\vert w\vert^2)^2}{|1-\overline{w}z|^2}\left(1+\log
\left|\frac{1-\overline{w}z}{w-z}\right|\right),\qquad z,w\in\mathbb
D.
$$
The difference between $u$ and $v$ is harmonic, thus the regularity of $u$ 
follows from the regularity of $v$.
\end{proof}
\begin{remark}
	The constants $A$ and $B$, with $A + B =1$, were chosen in the proof so 
that $u(w)=(1-|w|^2)^{\alpha/2}$ would be a solution of the Euler--Lagrange 
equation for some $\lambda\in\mathbb R$. This is only possible if 
$\beta=\alpha+1$, and thus explains why this relationship is imposed in the 
statement of Theorem \ref{thm:generalcarleman}. The condition 
$\alpha\geq\alpha_0$ comes from \cite[Thm.~1.3]{MS08}, but we do not know if it 
is necessary for the uniqueness of \eqref{eq:minimizer3}.
\end{remark}
\begin{question}
	For any $0<p\leq q$ and $\alpha,\beta\geq 1$ such that $q/\beta\leq 
p/\alpha$, does the contractive inequality $$\|f\|_{A^q_\beta(\DD)}\leq 
\|f\|_{A^p_\alpha(\DD)}$$
	hold? By Carleman's inequality and Theorem \ref{thm:generalcarleman}, 
this is true when $\beta=\alpha+n$ for some integer $n$, and either $\alpha=1$ 
or $\alpha\geq\alpha_0$. We remark that it is easy to show, for example by 
computing with coefficients, that
	\[\|f\|_{A^4_{2\alpha}(\mathbb{D})}\leq \|f\|_{A^2_\alpha(\mathbb{D})}\]
	holds for every $\alpha\geq1$.
\end{question}

\subsection{Hypercontractivity of the Poisson kernel} For $r\in[0,1]$, let 
$P_r$ denote the operator defined on analytic functions in $\mathbb{D}$ by $P_rf(w) = f(rw)$. 
Clearly, if $r<1$ it follows from \eqref{eq:pest} that $P_r$ maps any 
$A^p_\alpha(\mathbb{D})$ into every $A^q_\beta(\mathbb{D})$. We are interested 
in knowing when this map is contractive.
\begin{thm}
	\label{thm:weissler} Let $0<p\leq q<\infty$ and let $\alpha = (n+1)/2$ 
for some $n\in\mathbb{N}$. Then $P_r$ is a contraction from 
$A^p_\alpha(\mathbb{D})$ to $A^q_\alpha(\mathbb{D})$ if and only if $r \leq 
\sqrt{p/q}$. 
\end{thm}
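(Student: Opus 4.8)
The plan is to transfer the problem to the unit sphere $S^n\subset\mathbb R^{n+1}$, where $2\alpha=n+1$, and there invoke the hypercontractivity of the Poisson (harmonic extension) semigroup. Two structural facts drive the reduction. First, writing a point of $S^n$ as $\zeta=(\zeta_1,\dots,\zeta_{n+1})$ and projecting onto the plane of its first two coordinates, $\pi(\zeta)=\zeta_1+i\zeta_2=:w\in\overline{\mathbb D}$, the pushforward of normalized surface measure $\sigma$ on $S^n$ has density proportional to $(1-|w|^2)^{(n-3)/2}$ on $\mathbb{D}$. Since $2\alpha=n+1$ gives $\alpha-2=(n-3)/2$, and since both measures are probabilities, the pushforward is exactly $dm_\alpha$; hence $f\mapsto\widetilde f:=f\circ\pi$ is, simultaneously for every $p$, an isometry of $A^p_\alpha(\mathbb{D})$ onto its image in $L^p(S^n,\sigma)$. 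Second, each monomial $w^j$ pulls back to $(\zeta_1+i\zeta_2)^j$, which is the restriction to $S^n$ of a harmonic homogeneous polynomial of degree $j$ on $\mathbb R^{n+1}$; thus its harmonic extension to the ball, evaluated at $r\zeta$, equals $r^j(\zeta_1+i\zeta_2)^j$. Consequently, under $f\leftrightarrow\widetilde f$, the dilation $P_r$ corresponds precisely to evaluating the harmonic extension of $\widetilde f$ at radius $r$, i.e. to the Poisson semigroup $e^{-tA}$ with $r=e^{-t}$ and $A$ acting as multiplication by $j$ on degree-$j$ spherical harmonics. For $n=1$ this is exactly the circle picture underlying Weissler's inequality.

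Necessity I would handle directly on the disc by a second-order perturbation. Testing with $f_c(w)=1+cw$ for small real $c$ and expanding gives
\[
\|1+cw\|_{A^p_\alpha(\mathbb{D})}=1+\tfrac{p}{4}\,M_\alpha\,c^2+o(c^2),\qquad M_\alpha:=\int_{\mathbb{D}}|w|^2\,dm_\alpha(w),
\]
and likewise $\|P_rf_c\|_{A^q_\alpha(\mathbb{D})}=\|1+rcw\|_{A^q_\alpha(\mathbb{D})}=1+\tfrac{q}{4}\,M_\alpha\,r^2c^2+o(c^2)$. Contractivity for all small $c$ forces $q\,r^2\le p$, that is $r\le\sqrt{p/q}$, a bound independent of $\alpha$.

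For sufficiency the task is to prove contractivity of the Poisson semigroup on the image of $A^p_\alpha(\mathbb{D})$ at the sharp radius $r=\sqrt{p/q}$, and here lies the main obstacle. It is an essential one: the \emph{full} Poisson semigroup on $L^p(S^n)$ is not contractive into $L^q(S^n)$ at this radius. Indeed, repeating the perturbation with the \emph{real} degree-one harmonic $\widetilde f=1+c\,\zeta_1$ yields the strictly smaller threshold $r\le\sqrt{(p-1)/(q-1)}$. Thus the proof must exploit that the functions $\widetilde f$ are \emph{analytic}, i.e. spanned by the single highest-weight harmonic $(\zeta_1+i\zeta_2)^j$ in each degree, corresponding in the one-variable trace to non-negative Fourier frequencies only. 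Following the scheme of \cite{Bandy09,Beckner93}, I would set $r=e^{-t}$ and $q=q(t)=p\,e^{2t}$, and show that $t\mapsto\log\|e^{-tA}\widetilde f\|_{L^{q(t)}(S^n)}$ is non-increasing; differentiating reduces this, via the semigroup property, to a single sharp logarithmic Sobolev (entropy--energy) inequality on $S^n$ restricted to the analytic subspace. The decisive gain is that analyticity of $f$ makes $|f|^s$ subharmonic for every $s>0$, which is exactly what upgrades the real-variable constant $\sqrt{(p-1)/(q-1)}$ to the sharp holomorphic constant $\sqrt{p/q}$; this is where I expect the genuine work to lie, and where Beckner's sharp spherical inequalities, rather than generic Gross--Nelson hypercontractivity, are needed.

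Finally, to make roots and the subharmonicity of $|f|^s$ available, I would first reduce to non-vanishing $f$ by factoring out zeros and passing to a limit, exactly as in the proof of Carleman's inequality recalled above, and only then run the semigroup/entropy argument; the general case then follows by density and by the isometry $f\mapsto\widetilde f$ transferring the conclusion back to $\|P_rf\|_{A^q_\alpha(\mathbb{D})}\le\|f\|_{A^p_\alpha(\mathbb{D})}$.
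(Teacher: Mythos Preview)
Your reduction to the sphere via the pushforward identity is exactly the paper's Lemma~\ref{lem:slices}, and your necessity argument is the same perturbative computation. You also correctly locate the obstruction: Beckner's $L^s(\mathbb S^n)\to L^t(\mathbb S^n)$ hypercontractivity for the Poisson kernel gives the threshold $\sqrt{(s-1)/(t-1)}$, not $\sqrt{p/q}$. But your proposed resolution --- proving a sharp logarithmic Sobolev inequality on the \emph{analytic} subspace of $L^p(S^n)$ by differentiating along the semigroup --- is not carried out; you write that ``this is where I expect the genuine work to lie,'' which is an admission that the core step is missing. That is a genuine gap.

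The paper closes the gap with a one-line device you have not found. Given $r<\sqrt{p/q}$, pick a large integer $m$ with $mp>1$ and $r\le\sqrt{(mp-1)/(mq-1)}$, and set $g(x)=|f(x_1+ix_2)|^{1/m}$ on $\mathbb S^n$. Because $f$ is analytic, $|f|^{1/m}$ is subharmonic, hence its harmonic extension dominates its dilate: $g(rx)\le(\mathcal P_r g)(x)$. Now apply Beckner's inequality at the exponents $s=mp$, $t=mq$ to $g$, raise to the $m$th power, and use the slice lemma to return to $A^p_\alpha(\mathbb D)$. The point is that $\sqrt{(mp-1)/(mq-1)}\to\sqrt{p/q}$ as $m\to\infty$, so the full real-variable Beckner theorem suffices once one has the freedom to take arbitrary positive roots of $|f|$. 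No holomorphic log-Sobolev inequality is needed.

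Two further remarks. First, your final paragraph is unnecessary: $|f|^s$ is subharmonic for every $s>0$ and every analytic $f$, zeros or not, so there is nothing to factor out; moreover Blaschke-type factorization does not preserve Bergman norms, so that route would not work here anyway. Second, while it is true that $w^j$ lifts to a degree-$j$ spherical harmonic and hence $P_r$ corresponds to the Poisson semigroup on the image of $A^p_\alpha$, the paper never needs this identification at the level of the semigroup; it uses only the pointwise subharmonicity bound and Beckner's $L^s\to L^t$ estimate as a black box.
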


Weissler \cite{Weissler80} proved Theorem~\ref{thm:weissler} when $\alpha=1$. 
The case $\alpha=3/2$ is also known, see \cite[Remark~5.14]{Gross02} or 
\cite{Janson83}, but it appears that these are the only two previously 
demonstrated cases. To prove Theorem~\ref{thm:weissler} we will use a classical 
argument of complex analysis to transfer results from Hardy spaces to Bergman 
spaces in smaller dimensions. This will be accomplished through the following 
lemma. 
\begin{lem}[\cite{Rudin80}, Sec.~1.4.4]
	\label{lem:slices} Let $\mathbb{S}^n$ denote the real unit sphere of 
dimension $n\geq 1$, and let $\sigma_n$ denote its normalized surface measure. 
Extend the function $h \colon \mathbb{D} \to \mathbb{C}$ to $\mathbb{S}^n$ by $\widetilde{h}(x)=h(x_1+ix_2)$ for $x = 
(x_1,x_2,\ldots,x_{n+1})\in\mathbb{S}^n$. Then
	\[\int_{\mathbb{S}^n} \widetilde{h}(x)\, d\sigma_n(x) = 
\int_{\mathbb{D}} h(w) dm_{(n+1)/2}(w).\]
\end{lem}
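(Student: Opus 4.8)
The plan is to reduce the spherical integral to an integral over $\mathbb{D}$ against the pushforward of $\sigma_n$ under the projection onto the first two coordinates, and then to identify this pushforward with $m_{(n+1)/2}$. First I would observe that $\widetilde h(x)=h(x_1+ix_2)$ depends on $x\in\mathbb{S}^n$ only through $w:=x_1+ix_2$, which ranges over $\mathbb{D}$ as $x$ ranges over $\mathbb{S}^n$ (with $|w|<1$ off a set of measure zero). Hence, writing $\pi(x)=x_1+ix_2$, the left-hand side equals $\int_{\mathbb{D}}h(w)\,d\nu(w)$, where $\nu=\pi_\ast\sigma_n$ is a rotation-invariant probability measure on $\mathbb{D}$. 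The entire content of the lemma is therefore the computation of $\nu$.

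To find $\nu$ for $n\geq2$ I would slice $\mathbb{S}^n$ over the $(x_1,x_2)$-plane: for fixed $w$ with $|w|=\rho<1$ the fiber is the sphere of radius $\sqrt{1-\rho^2}$ in the remaining $n-1$ coordinates, a scaled copy of $\mathbb{S}^{n-2}$. Concretely I would use the join parametrization $x=(\sin t\cdot\eta,\cos t\cdot\omega)$ with $t\in[0,\pi/2]$, $\eta\in\mathbb{S}^1$ and $\omega\in\mathbb{S}^{n-2}$, for which the unnormalized surface measure factors orthogonally as $(\sin t)(\cos t)^{n-2}\,dt\,d\eta\,d\omega$. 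Since $\widetilde h$ is independent of $\omega$, integrating $\omega$ out over $\mathbb{S}^{n-2}$ yields an overall constant, and the substitution $\rho=\sin t$ (so that $dt=d\rho/\sqrt{1-\rho^2}$ and $(\cos t)^{n-2}=(1-\rho^2)^{(n-2)/2}$) turns the remaining factor into $\rho(1-\rho^2)^{(n-3)/2}\,d\rho$. Recognizing $\rho\,d\rho\,d\psi$ with $\eta=e^{i\psi}$ as the planar area element, this shows $d\nu(w)=c_n(1-|w|^2)^{(n-3)/2}\,dm(w)$ for some constant $c_n>0$.

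It then remains only to fix $c_n$, and here I would avoid evaluating surface areas by a normalization argument: $\nu$ is a probability measure, and a direct computation (substituting $u=|w|^2$) gives $\int_{\mathbb{D}}(1-|w|^2)^{\alpha-2}\,dm(w)=1/(\alpha-1)$ for $\alpha>1$. With $\alpha=(n+1)/2$ the exponent $(n-3)/2$ equals $\alpha-2$, so the unique probability measure with this density profile is precisely $m_{(n+1)/2}=(\alpha-1)(1-|w|^2)^{\alpha-2}\,dm$; thus $c_n=(n-1)/2$ is forced and $\nu=m_{(n+1)/2}$, which is the assertion. The degenerate case $n=1$ (where the fiber $\mathbb{S}^{-1}$ disappears and $\alpha=1$) I would handle separately: there $\pi$ is the identity on $\mathbb{S}^1=\mathbb{T}$, so $\sigma_1=m_1$ and the claimed identity is the definition of $m_1$.

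I expect the main obstacle to be pinning down the density exponent $(n-3)/2$, which is where two competing geometric effects must be balanced: the fiber $(n-2)$-sphere shrinks like $(1-|w|^2)^{(n-2)/2}$ in volume, while the metric Jacobian from $dt=d\rho/\cos t$ contributes an extra factor $(1-|w|^2)^{-1/2}$. As a safeguard I would double-check the outcome by matching radial moments: by rotation invariance both measures annihilate $w^j\bar w^k$ for $j\neq k$, so it suffices to verify $\int_{\mathbb{S}^n}(x_1^2+x_2^2)^k\,d\sigma_n=\int_{\mathbb{D}}|w|^{2k}\,dm_{(n+1)/2}$ for every $k\geq0$, a Beta-integral identity that confirms the density independently of the slicing.
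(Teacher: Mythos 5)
Your proof is correct, and it is essentially the standard slice-integration argument: the paper itself offers no proof, deferring to Rudin's book (Sec.~1.4.4), and your join decomposition $x=(\sin t\cdot\eta,\cos t\cdot\omega)$ with the substitution $\rho=\sin t$ is precisely the computation underlying that citation, yielding the density $(1-|w|^2)^{(n-3)/2}=(1-|w|^2)^{\alpha-2}$ with $\alpha=(n+1)/2$, after which normalization fixes the constant $c_n=(n-1)/2=\alpha-1$. Your separate treatment of the degenerate case $n=1$ (where $m_1$ is the measure on $\mathbb{T}$, matching the paper's convention $A^p_1(\mathbb{D})=H^p(\mathbb{D})$) and the moment cross-check are both sound.
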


We can now demonstrate how Theorem~\ref{thm:weissler} follows from a result of 
Beckner \cite{Beckner93} concerning the unit sphere.
\begin{proof}
	[Proof of Theorem~\ref{thm:weissler}] Let $\mathcal{P}_r$ denote the 
Poisson kernel on $\mathbb{S}^n$, defined by
	\[\mathcal{P}_r(\xi,\eta)=\frac{1-r^2}{|r\xi-\eta|^{n+1}}, \qquad 
\xi,\eta \in \mathbb{S}^n.\]
	For a function $g$ on $\mathbb{S}^n$, let
	
\[(\mathcal{P}_rg)(\xi)=\int_{\mathbb{S}^n}\mathcal{P}_r(\xi,
\eta)g(\eta)d\sigma_n(\eta).\]
	It is proved in \cite{Beckner93} that $\mathcal{P}_r$ defines a 
contraction from $L^s(\mathbb{S}^n)$ to $L^t(\mathbb{S}^n)$, $1\leq s\leq 
t<\infty$, if and only if $r\leq \sqrt{(s-1)/(t-1)}$.
	
	Let us now start with $0<p \leq q<\infty$ and $r<\sqrt{p/q}$. Let $m$ 
be a large number such that $mp>1$ and such that
	\[r \leq \sqrt{\frac{mp-1}{mq-1}}.\]
	Given an analytic polynomial $f$, we define $g$ on $\mathbb{S}^n$ by
	\[g(x_1,x_2,\ldots,x_{n+1}) = |f(x_1+ix_2)|^{1/m}.\]
	Since $f$ is analytic, it follows that $g$ is subharmonic and hence for 
any $(x_1,\dots,x_{n+1})\in \mathbb{S}^n$ we get that
	\[g(rx_1,\dots,rx_{n+1})\leq \mathcal{P}_rg(x_1,\dots,x_{n+1}).\]
	Using Beckner's result with $s = mp$ and $t = mq$ we get that
	
\[\left(\int_{\mathbb{S}^n}g(rx_1,\dots,rx_{n+1})^{mq}d\sigma_n(x)\right)^{1/q} 
\leq 
\left(\int_{\mathbb{S}^n}g(x_1,\dots,x_{n+1})^{mp}d\sigma_n(x)\right)^{1/p}.\]
	By Lemma~{\ref{lem:slices}}, this is the same as
	\[\left(\int_{\mathbb{D}}|f(rw)|^q dm_{(n+1)/2}(w)\right)^\frac{1}{q} 
\leq \left(\int_{\mathbb{D}} |f(w)|^p dm_{(n+1)/2}(w)\right)^\frac{1}{p}.\]
	It follows that the condition $r \leq \sqrt{p/q}$ is sufficient (by a 
limiting argument in the endpoint case $r = \sqrt{p/q}$). Conversely, for fixed 
$r>0$ and small $\varepsilon>0$ we have that
	\[\left(\int_{\mathbb{D}} |1+ \varepsilon r w|^q\,dm_\alpha(w) 
\right)^\frac{1}{q} = 1 + \frac{q r^2}{4\alpha}\,\varepsilon^2 + 
O(\varepsilon^4).\]
	Letting $\varepsilon \to 0$ shows that $qr^2 \leq p$ is also necessary, 
for any value of $\alpha \geq 1$.
\end{proof}
\begin{remark}
	As in the previous subsection, we conjecture that 
Theorem~\ref{thm:weissler} is true for all values of $\alpha\geq 1$. Several 
other positive results can be deduced from Theorem \ref{thm:generalcarleman}. 
For instance, if $\alpha\geq\alpha_0$, then
\[\|P_r f\|_{A_\alpha^2(\DD)}\leq 
\|f\|_{A^{2\alpha/(\alpha+1)}_{\alpha}(\DD)},\]
for every analytic polynomial $f$, if and only if $r^2\leq (\alpha+1)/\alpha$. 
In fact, it follows from Theorem~\ref{thm:generalcarleman} that
\[\|f\|_{A^2_{\alpha+1}(\DD)}\leq \|f\|_{A_\alpha^{2\alpha/(\alpha+1)}(\DD)}.\]
Computing the norms as in \eqref{eq:A2anorm}, we have that
\[\|P_r f\|_{A^2_{\alpha}(\DD)}\leq \|f\|_{A^2_{\alpha+1}(\DD)}\]
if and only if, for any $k\geq 1$,
\[r^{2k}\leq \frac{c_{\alpha+1}(k)}{c_\alpha(k)}=\frac{\alpha+k}{\alpha}.\] 
\end{remark}

\section{Inequalities on the polydisc and in the half-plane} \label{sec:iter} 
For $\alpha>1$, consider the following product measure on $\mathbb{D}^\infty$,
\[\mathbf{m}_\alpha(z) = m_\alpha(z_1)\times m_\alpha(z_2) \times m_\alpha(z_3) 
\times \cdots,\]
and for $0<p<\infty$ the corresponding Lebesgue space 
$L^p_\alpha(\mathbb{D}^\infty)$. We define the Bergman spaces of the infinite 
polydisc, denoted $A^p_\alpha(\mathbb{D}^\infty)$, as the closure in 
$L^p_\alpha(\mathbb{D}^\infty)$ of the space of analytic polynomials in an 
arbitrary number of variables. The Hardy spaces $H^p(\mathbb{D}^\infty)$ are 
defined as the closure of analytic polynomials with respect to the norm given 
by the product $m_1 \times m_1 \times \cdots$ on $\mathbb{T}^\infty$, so that
\[\|f\|_{H^p(\mathbb{D}^\infty)}^p = \int_{\mathbb{T}^\infty} |f(z)|^p 
\,d\textbf{m}_1(z).\] 
As before, $H^p(\mathbb{D}^\infty)$ is the limit as $\alpha \to 1^+$ of 
$A^p_\alpha(\mathbb{D}^\infty)$, in the sense that
\[\lim_{\alpha\to1^+} \|f\|_{A^p_\alpha(\mathbb{D}^\infty)} = 
\|f\|_{H^p(\mathbb{D}^\infty)}\]
for every analytic polynomial $f$. We distinguish the case $\alpha=2$ by 
writing $A^p(\mathbb{D}^\infty)=A^p_2(\mathbb{D}^\infty)$. Applying the point 
estimate \eqref{eq:pest} repeatedly we find that if $f$ is a polynomial in 
$A^p_\alpha(\mathbb{D}^\infty)$, then 
\begin{equation}
	\label{eq:pestinf} |f(z)| \leq \left(\prod_{j=1}^\infty 
\frac{1}{1-|z_j|^2}\right)^{\alpha/p} \|f\|_{A^p_\alpha(\mathbb{D}^\infty)}, 
\end{equation}
which implies that elements of $A^p_\alpha(\mathbb{D}^\infty)$ are analytic 
functions on $\mathbb{D}^\infty \cap \ell^2$. Every $f$ in 
$A^p_\alpha(\mathbb{D}^\infty)$ has a power series expansion convergent in 
$\mathbb{D}^\infty \cap \ell^2$,
\begin{equation}
	\label{eq:formalexp} f(z) = \sum_{\kappa \in \mathbb{N}^\infty_0} 
a_\kappa z^\kappa, 
\end{equation}
where $\mathbb{N}^\infty_0$ denotes the set of all finite non-negative 
multi-indices. 

Finally, when $p=2$ we can compute the norm explicitly. Suppose that $f$ is of 
the form \eqref{eq:formalexp}. Then 
\begin{equation}
	\label{eq:A2infnorm} \|f\|_{A^2_\alpha(\mathbb{D}^\infty)} = 
\left(\sum_{\kappa \in \mathbb{N}^\infty_0} 
\frac{|a_\kappa|^2}{c_\alpha(\kappa)}\right)^\frac{1}{2}, \qquad\text{ where 
}\qquad c_\alpha(\kappa) = \prod_{j=1}^\infty c_\alpha(\kappa_j). 
\end{equation}
Note that the final product contains only a finite number of factors not equal 
to $1$, since $\kappa$ is a finite multi-index.

The contractive inequalities of Section~\ref{sec:ineq} can now be extended to 
$\mathbb{D}^\infty$ using Minkowski's inequality in the following formulation: 
if $X$ and $Y$ are measure spaces, $g$ a measurable function on $X \times Y$, 
and $p\geq1$, then
\[\left(\int_X\left(\int_Y |g(x,y)|\,dy\right)^p\,dx\right)^\frac{1}{p} \leq 
\int_Y \left(\int_X |g(x,y)|^p\,dx\right)^\frac{1}{p}\,dy.\]
It is sufficient to prove the contractive results on the finite polydiscs 
$\mathbb{D}^d$, $d<\infty$, as this allows us to conclude by the density of 
analytic polynomials. This is done by iteratively applying the one dimensional 
result to each of the variables, and applying Minkowski's inequality in each 
step.
This procedure has been repeated many times (for instance in 
\cite{Bayart02,BHS15,Helson06} or in \cite[Sec.~6.5.3]{QQ13}) and we do not 
include the details here. 

In particular, Corollary~\ref{cor:HLin} for $n=2$ yields the next result on the 
polydisc. Helson \cite{Helson06} proved the corresponding result for the Hardy 
spaces $H^p(\mathbb{D}^\infty)$,
which he used to study Hilbert--Schmidt multiplicative Hankel forms. We shall 
carry out the analogous study for weighted multiplicative Hankel forms 
associated with the Bergman space in the next section.
\begin{lem}
	\label{lem:helson} 
$\|f\|_{A^2_4(\mathbb{D}^\infty)}\leq\|f\|_{A^1(\mathbb{D}^\infty)}$. 
\end{lem}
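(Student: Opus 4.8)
The plan is to combine the one-variable inequality from Corollary~\ref{cor:HLin}, taken with $n = 2$ so that $p = 1$ and $n+2 = 4$, with the tensorization scheme sketched above, proceeding by induction on the dimension $d$ of the finite polydisc $\DD^d$; the infinite-dimensional statement then follows from the density of analytic polynomials in $A^1(\DD^\infty)$ together with the monotone convergence of the norms in $d$. The base case $d = 1$ is precisely Corollary~\ref{cor:HLin} with $n = 2$, which reads $\|g\|_{A^2_4(\DD)} \leq \|g\|_{A^1(\DD)}$ for analytic $g$; recall here that $A^1(\DD) = A^1_2(\DD)$ carries the measure $m_2$, so the exponents match the measures in the definition of the norms.

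For the inductive step I would write $z = (z_1, \hat z)$ with $\hat z = (z_2, \ldots, z_d)$ and first compute the $A^2_4(\DD^d)$ norm by integrating out $z_1$ innermost. For each fixed $\hat z$ the slice $z_1 \mapsto f(z_1, \hat z)$ is analytic, so the base case gives the pointwise bound $\left(\int_\DD |f(z_1,\hat z)|^2 \, dm_4(z_1)\right)^{1/2} \leq \int_\DD |f(z_1,\hat z)| \, dm_2(z_1) =: G(\hat z)$. Since the $A^2_4(\DD^{d-1})$ norm is monotone in $|\cdot|$, integrating this in $\hat z$ yields $\|f\|_{A^2_4(\DD^d)} \leq \|G\|_{A^2_4(\DD^{d-1})}$, reducing matters to estimating $\|G\|_{A^2_4(\DD^{d-1})}$.

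The crux is this last estimate. Because $G$ is an integral in $z_1$ of $|f|$, I would apply Minkowski's integral inequality with exponent $2$ to interchange the $L^2(d\mathbf m_4)$ norm over $\hat z$ with the $L^1(dm_2)$ integral over $z_1$, obtaining $\|G\|_{A^2_4(\DD^{d-1})} \leq \int_\DD \|f(z_1,\cdot)\|_{A^2_4(\DD^{d-1})} \, dm_2(z_1)$. For each fixed $z_1$ the slice $f(z_1,\cdot)$ is an analytic polynomial in $\hat z$, so the inductive hypothesis bounds the inner norm by $\|f(z_1,\cdot)\|_{A^1(\DD^{d-1})}$. Integrating in $z_1$ and using Fubini then recovers exactly $\|f\|_{A^1(\DD^d)}$, closing the induction.

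The main obstacle, and the reason a naive variable-by-variable iteration of the one-dimensional inequality does not suffice, is that the intermediate function $G$ is no longer analytic, so Corollary~\ref{cor:HLin} cannot be reapplied to it directly. The remedy, built into the order of operations above, is to perform the analytic inequality in $z_1$ first while analyticity is still available, then use Minkowski's inequality purely as a rearrangement of Lebesgue norms, and only afterwards invoke the inductive hypothesis on the slices $f(z_1,\cdot)$, which do remain analytic in the remaining variables. One should finally verify that Minkowski is being applied in the favorable direction, which is exactly what the hypothesis $2 \geq 1$ on the target and source exponents guarantees.
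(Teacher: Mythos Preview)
Your proposal is correct and follows essentially the same approach as the paper: apply Corollary~\ref{cor:HLin} with $n=2$ in one variable and tensorize to $\DD^d$ via Minkowski's inequality, then pass to $d=\infty$ by density of polynomials. The only cosmetic point is that writing $\|G\|_{A^2_4(\DD^{d-1})}$ is a slight abuse of notation since $G$ is not analytic, but you clearly mean the $L^2(\DD^{d-1},m_4)$ norm, which is all that is needed.
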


Let $\mathbf{r} = (r_1,r_2,\ldots)$ with $r_j\in[0,1]$ and define 
$P_{\mathbf{r}} f(z) = f(r_1z_1,r_2z_2,\ldots)$. Following \cite{Bayart02} and 
using Theorem~\ref{thm:weissler} (with $\alpha=2$), we get the next result. 
\begin{lem}
	\label{lem:weisslerpoly} Let $0<p\leq q < \infty$. The map 
$P_{\mathbf{r}}$ is a contraction from $A^p(\mathbb{D}^\infty)$ to 
$A^q(\mathbb{D}^\infty)$ if and only if $r_j \leq \sqrt{p/q}$. Moreover, 
$P_{\mathbf{r}}$ is bounded from $A^p(\mathbb{D}^\infty)$ to 
$A^q(\mathbb{D}^\infty)$ as soon as $r_j \leq \sqrt{p/q}$ for all but a finite 
set of $j$s. 
\end{lem}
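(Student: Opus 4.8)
The plan is to reduce every assertion to the one–variable statement of Theorem~\ref{thm:weissler} (applied with $\alpha = 2$, which is legitimate since $2 = (n+1)/2$ for $n = 3$) and then feed the one–variable operators into the tensorization procedure recalled at the start of this section. For the contractive direction, suppose $r_j \le \sqrt{p/q}$ for every $j$. By Theorem~\ref{thm:weissler} each factor $P_{r_j}\colon A^p(\mathbb{D}) \to A^q(\mathbb{D})$ is a contraction, so applying the one–variable bound to each of the variables $z_1,\ldots,z_d$ in turn and invoking Minkowski's inequality (which is exactly where $q \ge p$ is used) at each step shows that $P_{\mathbf r}$ is a contraction on every finite polydisc $\mathbb{D}^d$; the density of analytic polynomials then promotes this to $A^p(\mathbb{D}^\infty)$.

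For the converse I would test $P_{\mathbf r}$ against functions depending on a single variable. Fixing $j$ and taking $f(z) = g(z_j)$, the fact that $m_2$ is a probability measure gives $\|f\|_{A^p(\mathbb{D}^\infty)} = \|g\|_{A^p(\mathbb{D})}$ and $\|P_{\mathbf r} f\|_{A^q(\mathbb{D}^\infty)} = \|P_{r_j} g\|_{A^q(\mathbb{D})}$, since the remaining variables are integrated out of a constant. Hence contractivity of $P_{\mathbf r}$ forces $P_{r_j}\colon A^p(\mathbb{D}) \to A^q(\mathbb{D})$ to be a contraction, and Theorem~\ref{thm:weissler} then yields $r_j \le \sqrt{p/q}$.

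For the boundedness statement, let $F$ be the finite set of indices with $r_j > \sqrt{p/q}$; for boundedness one needs these exceptional radii to satisfy $r_j < 1$ (a single undilated variable with $r_j=1$ already forces the identity $A^p(\mathbb{D})\to A^q(\mathbb{D})$, which is unbounded when $p<q$). For $j \notin F$ the factor $P_{r_j}\colon A^p(\mathbb{D}) \to A^q(\mathbb{D})$ is a contraction as above, while for $j \in F$ the pointwise estimate \eqref{eq:pest} gives $|f(r_j w)| \le (1-r_j^2)^{-2/p}\|f\|_{A^p(\mathbb{D})}$ for $|w|\le 1$, so $P_{r_j}\colon A^p(\mathbb{D}) \to A^q(\mathbb{D})$ is bounded with constant $C_j \le (1-r_j^2)^{-2/p} < \infty$. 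The key observation is that the tensorization scheme is not restricted to contractions: running the same variable–by–variable argument with a bounded inner factor of norm $C_j$ merely pulls the constant $C_j$ out in front, the Minkowski interchange of the mixed $L^q_{z}L^p_{z'}$ norms being unaffected. Iterating over all variables then produces
\[
\|P_{\mathbf r} f\|_{A^q(\mathbb{D}^\infty)} \le \Bigl(\prod_{j \in F} C_j\Bigr)\,\|f\|_{A^p(\mathbb{D}^\infty)},
\]
and the product is finite since $F$ is finite.

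The routine parts are the two appeals to Theorem~\ref{thm:weissler} and the single–variable bound extracted from \eqref{eq:pest}. The step I expect to be the main obstacle is making rigorous that tensorization multiplies the individual operator norms: one must check that the Minkowski interchange survives verbatim when the inner one–variable operator is only bounded rather than contractive, and that the resulting finite product of constants passes correctly to the infinite–dimensional limit through the density of analytic polynomials. The contractive biconditional is comparatively harmless, as both directions localize cleanly to a single variable.
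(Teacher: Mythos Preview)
Your proposal is correct and follows exactly the approach the paper indicates: tensorize the one-variable result of Theorem~\ref{thm:weissler} (with $\alpha=2$) via Minkowski's inequality for the sufficiency, test on single-variable functions for the necessity, and for the boundedness clause pull out the finitely many non-contractive factors as bounded constants. The paper itself gives no more than a one-line reference to \cite{Bayart02} and Theorem~\ref{thm:weissler}, so you have in fact supplied the details it omits; your side remark that the exceptional $r_j$ must be strictly less than $1$ is a correct caveat implicit in the intended use.
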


When working with multiplicative Hankel forms and Dirichlet series, it is often 
convenient to recast the expansion \eqref{eq:formalexp} in multiplicative 
notation. Each integer $n\geq1$ can be written in a unique way as a product of 
prime numbers,
\[n = \prod_{j=1}^\infty p_j^{\kappa_j}.\]
This factorization associates $n$ uniquely to the finite non-negative 
multi-index $\kappa(n)$. Setting $a_n = a_{\kappa(n)}$, we rewrite 
\eqref{eq:formalexp} as 
\begin{equation}
	\label{eq:multexp} f(z) = \sum_{n=1}^\infty a_n z^{\kappa(n)}. 
\end{equation}

For $\alpha\geq1$ we define the general divisor function $d_\alpha(n)$ as the 
coefficients of the Dirichlet series given by $\zeta^\alpha$, where 
$\zeta(s)=\sum_{n\geq1} n^{-s}$ is the Riemann zeta function. Using the Euler 
product of the Riemann zeta function, say for $\mre(s)>1$, we find that 
\begin{equation}
	\label{eq:zetapow} \zeta(s)^\alpha = \left(\prod_{j=1}^\infty 
\frac{1}{1-p_j^{-s}}\right)^\alpha = \prod_{j=1}^\infty\left(\sum_{k=0}^\infty 
c_\alpha(k)p_j^{-ks}\right) =\sum_{n=1}^\infty d_\alpha(n) n^{-s}.
\end{equation}
It follows that $c_\alpha(\kappa(n)) = d_\alpha(n)$. In multiplicative 
notation, we restate \eqref{eq:A2infnorm} as 
\[\left\|\sum_{n=1}^\infty a_n 
z^{\kappa(n)}\right\|_{A^2_\alpha(\mathbb{D}^\infty)} = \left(\sum_{n=1}^\infty 
\frac{|a_n|^2}{d_\alpha(n)}\right)^\frac{1}{2}.\]
When $\alpha\geq1$ is an integer, it is clear that $d_\alpha(n)$ denotes the 
number of ways to write $n$ as a product of $\alpha$ non-negative integers. In 
particular, $d_2$ is the usual divisor function $d$. It also follows from 
\eqref{eq:zetapow} that 
\begin{equation}
	\label{eq:multconv} \sum_{mn=l} d_\alpha(m)d_\beta(n) = 
d_{\alpha\beta}(l),
\end{equation}
in analogy with \eqref{eq:sumconv}. 

The Bohr lift of a Dirichlet series $f(s) = \sum_{n\geq 1} a_n n^{-s}$ is the 
power series defined by
\[\mathscr{B}f(z) = \sum_{n=1}^\infty a_n z^{\kappa(n)},\]
realizing the identification $z_j = p_j^{-s}$. The Bergman space of Dirichlet 
series $\mathscr A^p$ is defined as the completion of Dirichlet polynomials in 
the norm $$\|f\|_{\mathscr{A}^p} = \|\mathscr{B}f\|_{A^p(\mathbb{D}^\infty)}.$$
Inequality \eqref{eq:pestinf} implies that $\mathscr{A}^p$ is a space of 
analytic functions in the half-plane $\mathbb{C}_{1/2}$, and that $f$ in 
$\mathscr{A}^p$ enjoys the sharp pointwise estimate
\begin{equation} \label{eq:Appointwise}
|f(s)| \leq \zeta(2\mre{s})^{2/p} \|f\|_{\mathscr{A}^p}.
\end{equation}
Let $\mathscr{T}$ denote the conformal map of $\mathbb{D}$ to 
$\mathbb{C}_{1/2}$ given by
\[\mathscr{T}(z) = \frac{1}{2} + \frac{1-z}{1+z}.\]
The conformally invariant Bergman space of the half-plane $\mathbb{C}_{1/2}$, 
denoted $A^p_{\alpha,\operatorname{i}}(\mathbb{C}_{1/2})$, is the space of 
analytic functions $f$ in $\mathbb{C}_{1/2}$ with the property that $f \circ 
\mathscr{T} \in A^p_{\alpha}(\mathbb{D})$. A computation shows that
\[\|f\|_{A^p_{\alpha,\operatorname{i}}(\mathbb{C}_{1/2})}^p = 
\int_{\mathbb{C}_{1/2}} |f(s)|^p\,(\alpha-1) 
\left(\mre(s)-\frac{1}{2}\right)^{\alpha-2}\,\frac{4^{\alpha-1}}{|s+1/2|^{
2\alpha}}\,dm(s).\]
By Lemma~\ref{lem:helson} we have the following version of Carleman's 
inequality for Dirichlet series in the half-plane. 
\begin{thm}
	\label{thm:carlemanhalf} Suppose that $f(s) = \sum_{n\geq1} a_n n^{-s}$ 
is in $\mathscr{A}^1$. Then 
	\begin{equation}
		\label{eq:curlycarleman} \left(\sum_{n=1}^\infty 
\frac{|a_n|^2}{d_4(n)}\right)^\frac{1}{2} \leq \|f\|_{\mathscr{A}^1} 
	\end{equation}
	Moreover, there is a constant $C\geq1$ such that 
$\|f\|_{A^2_{4,\operatorname{i}}(\mathbb{C}_{1/2})} \leq 
C\|f\|_{\mathscr{A}^1}$. 
\end{thm}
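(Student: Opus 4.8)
The plan is to separate the two assertions: the contractive coefficient inequality \eqref{eq:curlycarleman} is a direct translation of Lemma~\ref{lem:helson}, while the half-plane bound I would obtain by pulling back a contractive disc inequality and then proving a soft local embedding. For \eqref{eq:curlycarleman}, the Bohr lift of $f(s)=\sum_{n\geq1}a_n n^{-s}$ is $\mathscr{B}f(z)=\sum_{n\geq1}a_n z^{\kappa(n)}$; the multiplicative form of \eqref{eq:A2infnorm} together with $c_4(\kappa(n))=d_4(n)$ gives $\|\mathscr{B}f\|_{A^2_4(\DD^\infty)}=(\sum_{n\geq1}|a_n|^2/d_4(n))^{1/2}$, and by definition $\|\mathscr{B}f\|_{A^1(\DD^\infty)}=\|f\|_{\mathscr{A}^1}$. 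Applying Lemma~\ref{lem:helson} to $\mathscr{B}f$ then yields \eqref{eq:curlycarleman} immediately.

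For the moreover part I would first transport a contractive inequality from the disc to the half-plane through the conformal map $\mathscr{T}$. Corollary~\ref{cor:HLin} with $n=2$ (so that $p=1$) states that $\|g\|_{A^2_4(\DD)}\leq\|g\|_{A^1(\DD)}$ for all $g\in A^1(\DD)$. Since the conformally invariant half-plane norms are by definition the disc norms of the composition with $\mathscr{T}$, taking $g=h\circ\mathscr{T}$ gives $\|h\|_{A^2_{4,\operatorname{i}}(\mathbb{C}_{1/2})}\leq\|h\|_{A^1_{2,\operatorname{i}}(\mathbb{C}_{1/2})}$ for every $h$ analytic in $\mathbb{C}_{1/2}$. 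It therefore suffices to prove the bounded embedding $\|f\|_{A^1_{2,\operatorname{i}}(\mathbb{C}_{1/2})}\leq C\|f\|_{\mathscr{A}^1}$; composing the two displays delivers the theorem with the same constant $C$.

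The hard part is this last embedding, the Bergman counterpart of the well-known local embedding for the Hardy spaces $\mathscr{H}^p$. Writing $\|f\|_{A^1_{2,\operatorname{i}}(\mathbb{C}_{1/2})}=\int_{\mathbb{C}_{1/2}}|f(s)|\,d\tilde\nu(s)$ with $\tilde\nu(s)=4|s+\tfrac12|^{-4}\,dm(s)$, I would split the half-plane into the region $\mre s\geq1$ and the strip $\tfrac12<\mre s<1$. On the former the pointwise bound \eqref{eq:Appointwise}, namely $|f(s)|\leq\zeta(2\mre s)^2\|f\|_{\mathscr{A}^1}$, is bounded and $\tilde\nu$ has finite mass, so the estimate is routine; on the strip, where $\zeta(2\mre s)^2$ blows up, genuine arithmetic input is needed, and I would supply it by a Montgomery--Vaughan type mean-value estimate for Dirichlet polynomials transported from $\DD^\infty$ via the vertical-limit correspondence, exactly as in the Hardy case. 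As an alternative one can target $\|f\|_{A^2_{4,\operatorname{i}}(\mathbb{C}_{1/2})}\leq C(\sum_n|a_n|^2/d_4(n))^{1/2}$ directly, since the source space is the reproducing kernel Hilbert space with kernel $\zeta(s+\bar w)^4$: one would either verify positivity of $C^2K_{\operatorname{i}}-\zeta(s+\bar w)^4$, where $K_{\operatorname{i}}(s,w)=(1-\mathscr{T}^{-1}(s)\overline{\mathscr{T}^{-1}(w)})^{-4}$ is the pulled-back disc kernel, or run a Schur test on the Gram matrix $G(m,n)=\int_{\mathbb{C}_{1/2}}(mn)^{-\sigma}e^{-it\log(m/n)}\,d\nu(s)$. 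In the latter the diagonal is comfortably controlled, but the oscillatory off-diagonal terms weighted by the possibly large factor $\sqrt{d_4(m)d_4(n)}$ are exactly the delicate point that the mean-value estimate is designed to overcome.
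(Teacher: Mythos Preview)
Your treatment of \eqref{eq:curlycarleman} is correct and is exactly what the paper does: it is Lemma~\ref{lem:helson} rewritten in multiplicative notation via the Bohr lift.

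For the ``moreover'' statement, however, your main route has a genuine gap. You reduce to proving
\[
\|f\|_{A^1_{2,\operatorname{i}}(\mathbb{C}_{1/2})}\leq C\,\|f\|_{\mathscr{A}^1},
\]
but this $L^1$-type local embedding for $\mathscr{A}^1$ is not known; in fact the paper explicitly singles it out later (in the discussion following \eqref{eq:hilbertlift}) as a \emph{stronger} statement than the one asserted in Theorem~\ref{thm:carlemanhalf}, and leaves it open. A Montgomery--Vaughan mean-value argument controls $L^2$ averages of Dirichlet polynomials, not $L^1$ averages, so the sketch you give for the strip $\tfrac12<\mre s<1$ does not go through as stated. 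In short, you have reduced to something at least as hard as (and apparently harder than) the target.

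The paper bypasses this entirely. It chains the two inequalities in the opposite order: first apply the already-proven coefficient bound \eqref{eq:curlycarleman}, and then invoke a known $L^2$-embedding (Example~2 of \cite{Olsen11}) to pass from the coefficient Hilbert space with weights $1/d_4(n)$ to $A^2_{4,\operatorname{i}}(\mathbb{C}_{1/2})$:
\[
\|f\|_{A^2_{4,\operatorname{i}}(\mathbb{C}_{1/2})}\;\leq\; C\Bigl(\sum_{n\geq1}\frac{|a_n|^2}{d_4(n)}\Bigr)^{1/2}\;\leq\; C\,\|f\|_{\mathscr{A}^1}.
\]
Your ``alternative'' paragraph is precisely this second step, and it is the right idea; the point is that it is a Hilbert-space Carleson-measure statement (for the space with reproducing kernel $\zeta(s+\bar w)^4$), for which \cite{Olsen11} already provides the required estimate. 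So rather than running a Schur test from scratch, you can simply cite that result and be done.
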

\begin{proof}
	The inequality \eqref{eq:curlycarleman} is Lemma~\ref{lem:helson} in 
multiplicative notation. The second statement follows from the first and 
Example~2 in \cite{Olsen11}. 
\end{proof}

For $\varepsilon>0$, define the translation operator $T_\varepsilon$ by 
$T_\varepsilon f(s) = f(s+\varepsilon)$. Here is a sharp and general version of 
\cite[Prop.~9]{BL15}, which we interpret as Weissler's inequality for Dirichlet 
series in the half-plane. The corresponding result for $\mathscr{H}^p$ can be 
found in \cite{Bayart02}.
\begin{thm}
	\label{thm:weisslerhalf} Let $0<p\leq q < \infty$. The operator 
$T_\varepsilon \colon \mathscr{A}^p \to \mathscr{A}^q$ is bounded for every 
$\varepsilon>0$, and contractive if and only if $2^{-\varepsilon} \leq 
\sqrt{p/q}$. 
\end{thm}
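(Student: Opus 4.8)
The plan is to reduce the statement entirely to Lemma~\ref{lem:weisslerpoly} via the Bohr lift. The key observation is that, under the identification $z_j = p_j^{-s}$, the translation $T_\varepsilon$ is conjugated into a coordinatewise dilation $P_{\mathbf{r}}$ on $A^q(\mathbb{D}^\infty)$ with a specific radius vector $\mathbf{r}$ depending on $\varepsilon$. Once this conjugation is in place, the theorem becomes a direct transcription of the polydisc result.

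First I would track the effect of $T_\varepsilon$ on coefficients. If $f(s) = \sum_{n\geq1} a_n n^{-s}$, then $T_\varepsilon f(s) = f(s+\varepsilon) = \sum_{n\geq1} a_n n^{-\varepsilon} n^{-s}$, so the Dirichlet coefficients are scaled by $n^{-\varepsilon}$. Passing to the Bohr lift and using $n = \prod_j p_j^{\kappa_j(n)}$, I factor $n^{-\varepsilon} = \prod_j \big(p_j^{-\varepsilon}\big)^{\kappa_j(n)}$. Setting $r_j = p_j^{-\varepsilon}$, this gives $\mathscr{B}(T_\varepsilon f)(z) = \sum_{n\geq1} a_n \prod_j (r_j z_j)^{\kappa_j(n)} = P_{\mathbf{r}}(\mathscr{B}f)(z)$, that is, $\mathscr{B} \circ T_\varepsilon = P_{\mathbf{r}} \circ \mathscr{B}$. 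Since $\mathscr{B}$ is by definition a norm-preserving identification of $\mathscr{A}^p$ with $A^p(\mathbb{D}^\infty)$, and likewise for $q$, the operator $T_\varepsilon \colon \mathscr{A}^p \to \mathscr{A}^q$ has exactly the same norm as $P_{\mathbf{r}} \colon A^p(\mathbb{D}^\infty) \to A^q(\mathbb{D}^\infty)$ with this particular $\mathbf{r}$.

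I would then invoke Lemma~\ref{lem:weisslerpoly}. For contractivity the lemma demands $r_j \leq \sqrt{p/q}$ for every $j$. Since $\varepsilon > 0$ and the primes are increasing with $p_1 = 2$, the sequence $r_j = p_j^{-\varepsilon}$ is decreasing in $j$, so $\sup_j r_j = r_1 = 2^{-\varepsilon}$. Hence the family of conditions collapses to the single inequality $2^{-\varepsilon} \leq \sqrt{p/q}$, which is precisely the claimed criterion. For boundedness I would observe that $p_j \to \infty$ forces $r_j = p_j^{-\varepsilon} \to 0$ for each fixed $\varepsilon > 0$, so only finitely many $r_j$ can exceed the positive threshold $\sqrt{p/q}$; the ``moreover'' clause of Lemma~\ref{lem:weisslerpoly} then gives boundedness of $P_{\mathbf{r}}$, hence of $T_\varepsilon$, for every $\varepsilon > 0$.

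There is no serious obstacle here: the argument is a dictionary translation of the polydisc statement into the language of Dirichlet series. The only point requiring care is the bookkeeping in the conjugation identity $\mathscr{B} T_\varepsilon = P_{\mathbf{r}} \mathscr{B}$ and the observation that the supremum of the $r_j$ is attained at the smallest prime $p_1 = 2$, which is exactly what produces the clean threshold $2^{-\varepsilon}$ rather than an unwieldy condition involving all primes simultaneously.
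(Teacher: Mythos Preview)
Your proposal is correct and follows exactly the same approach as the paper: reduce to Lemma~\ref{lem:weisslerpoly} via the identification $\mathscr{B}\,T_\varepsilon = P_{\mathbf r}\,\mathscr{B}$ with $r_j = p_j^{-\varepsilon}$, so that contractivity is governed by $r_1 = 2^{-\varepsilon}$ and boundedness follows from the ``moreover'' clause since $r_j \to 0$. The paper's proof states this in one sentence; you have simply unpacked the details.
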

\begin{proof}
	This follows from Lemma~\ref{lem:weisslerpoly}, using the fact that 
$T_\varepsilon$ corresponds to $P_\mathbf{r}$ with $r_j = p_j^{-\varepsilon}$. 
\end{proof}

We end this section by demonstrating that Lemma~\ref{lem:weisslerpoly} also 
implies a weak generalization of Theorem~\ref{thm:carlemanhalf} to more general 
exponents. In the Hardy space context, it was proven in \cite{BHS15} that if $f(s) = \sum_{n\geq1} a_n n^{-s}$ and $0<p\leq 2$, 
then
\[\left(\sum_{n=1}^\infty |a_n|^2 
\frac{|\mu(n)|}{d_{2/p}(n)}\right)^\frac{1}{2} \leq \|f\|_{\mathscr{H}^p}.\]
The M\"{o}bius factor $|\mu(n)|$ is $1$ if $n$ is square-free and $0$ if not. 
From \eqref{eq:burbea}, it follows that this factor may actually be replaced by 
$1$ if $p=2/(1+n)$ for some non-negative integer $n$. We have the following 
extension to Bergman spaces in mind. 
\begin{thm}
	\label{thm:AHLineq} Let $0<p\leq 2$ and suppose that $f(s) = 
\sum_{n\geq1} a_n n^{-s}$ is in $\mathscr{A}^p$. Then
	\[\left(\sum_{n=1}^\infty |a_n|^2 
\frac{|\mu(n)|}{d_{4/p}(n)}\right)^\frac{1}{2} \leq \|f\|_{\mathscr{A}^p}.\]
	If $p = 2/(1+n/2)$ for some non-negative integer $n$, then
	\[\left(\sum_{n=1}^\infty |a_n|^2 
\frac{1}{d_{4/p}(n)}\right)^\frac{1}{2} \leq \|f\|_{\mathscr{A}^p}.\]
\end{thm}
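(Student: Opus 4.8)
The plan is to prove the two statements by different routes: the first (with the Möbius factor, valid for all $0<p\le 2$) from the Bergman version of Weissler's inequality, and the second (without the factor, for the special exponents) from the tensorized Carleman inequality already recorded in this section.

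For the first inequality I would set $g=\mathscr{B}f=\sum_{n\ge 1}a_n z^{\kappa(n)}\in A^p(\mathbb{D}^\infty)$ and apply Lemma~\ref{lem:weisslerpoly} with target exponent $q=2$ and constant radii $r_j=\sqrt{p/2}$. These radii are exactly admissible (being equal to the threshold $\sqrt{p/q}$), so $P_{\mathbf r}$ is a contraction and
\[\|P_{\mathbf r}g\|_{A^2(\mathbb{D}^\infty)}\leq \|g\|_{A^p(\mathbb{D}^\infty)}=\|f\|_{\mathscr{A}^p}.\]
Since $P_{\mathbf r}$ multiplies each coefficient $a_n$ by $\mathbf r^{\kappa(n)}=(p/2)^{\Omega(n)/2}$, where $\Omega(n)=\sum_j\kappa_j(n)$ counts prime factors with multiplicity, the multiplicative form of the norm \eqref{eq:A2infnorm} with $\alpha=2$ (so that $c_2=d$) gives
\[\|P_{\mathbf r}g\|_{A^2(\mathbb{D}^\infty)}^2=\sum_{n=1}^\infty |a_n|^2\,\frac{(p/2)^{\Omega(n)}}{d(n)}.\]
As every summand is nonnegative, I may discard all but the square-free indices without increasing the sum. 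The computation then closes via an arithmetic identity on square-free $n$: there $\Omega(n)=\omega(n)$ and $d(n)=2^{\omega(n)}$, while multiplicativity of $d_{4/p}$ gives $d_{4/p}(n)=(4/p)^{\omega(n)}$, whence $(p/2)^{\Omega(n)}/d(n)=(p/4)^{\omega(n)}=1/d_{4/p}(n)$. Because $|\mu(n)|$ is precisely the indicator of the square-free integers, assembling these pieces yields the first claimed inequality.

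For the second inequality I would note that $p=2/(1+n/2)$ forces $4/p=n+2$, so that $d_{4/p}=d_{n+2}$ and the assertion is just $\|g\|_{A^2_{n+2}(\mathbb{D}^\infty)}\leq\|g\|_{A^p(\mathbb{D}^\infty)}$ in the coefficient notation of \eqref{eq:A2infnorm}. This is the tensorization to $\mathbb{D}^\infty$ of the one-variable Corollary~\ref{cor:HLin}, obtained by the standard Minkowski-inequality scheme of Section~\ref{sec:iter} (exactly as Lemma~\ref{lem:helson} realizes the case $n=2$, $p=1$). The point is that Corollary~\ref{cor:HLin} embeds contractively into $A^2_{n+2}$ directly, rather than through the coarser route via $A^2=A^2_2$ used above, so the square-free restriction—and with it the factor $|\mu(n)|$—can be dropped.

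I expect no serious obstacle, only bookkeeping: the main content of the first part is matching the constant radius $\sqrt{p/2}$, the divisor weight $d(n)$ coming from $A^2(\mathbb{D}^\infty)$, and the target weight $1/d_{4/p}(n)$ on square-free integers, all of which align because $q=2$ is forced by $r^2/2=p/4$. The conceptual remark worth making is that the loss incurred by restricting to square-free $n$ is exactly what permits arbitrary $0<p\le 2$ through the $q=2$ contraction, whereas for $p=2/(1+n/2)$ the sharper Corollary~\ref{cor:HLin} recovers that loss and removes the Möbius factor. A routine reduction to Dirichlet polynomials followed by Fatou's lemma on the coefficient sums extends both inequalities from polynomials to all $f\in\mathscr{A}^p$.
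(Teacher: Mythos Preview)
Your proposal is correct and follows essentially the same route as the paper: the first inequality via Lemma~\ref{lem:weisslerpoly} with $r_j=\sqrt{p/2}$, then restriction to square-free indices and the identity $(2/p)^{\Omega(n)}d(n)=d_{4/p}(n)$ there; the second via tensorization of Corollary~\ref{cor:HLin}. The only cosmetic difference is that the paper phrases the first computation in the Dirichlet-series norm $\mathscr{A}^2$ rather than $A^2(\mathbb{D}^\infty)$, but these are isometric under the Bohr lift.
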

\begin{proof}
	Let $\Omega(n)$ denote the number of prime factors of $n$ (counting 
multiplicity). Using Lemma~\ref{lem:weisslerpoly} with $r_j = \sqrt{p/2}$, we 
have that 
	\begin{align*}
		\left\|\sum_{n=1}^\infty a_n n^{-s}\right\|_{\mathscr{A}^p} 
&\geq \left\|\sum_{n=1}^\infty a_n \left(\frac{p}{2}\right)^{\Omega(n)/2} 
n^{-s}\right\|_{\mathscr{A}^2} = \left(\sum_{n=1}^\infty |a_n|^2 
\frac{1}{(2/p)^{\Omega(n)} d(n)}\right)^\frac{1}{2} \\
		&\geq \left(\sum_{n=1}^\infty |a_n|^2 
\frac{|\mu(n)|}{(2/p)^{\Omega(n)} d(n)}\right)^\frac{1}{2} = 
\left(\sum_{n=1}^\infty |a_n|^2 \frac{|\mu(n)|}{d_{4/p}(n)}\right)^\frac{1}{2}. 
	\end{align*}
	In the final equality we used that $d_\alpha(n) = \alpha^{\Omega(n)}$ 
when $n$ is square-free. When $p = 2/(1+n/2)$ for a non-negative integer $n$, 
tensorizing  Corollary~\ref{cor:HLin} (by appealing to Minkowski's inequality) 
yields that the M\"{o}bius factor is actually unnecessary; see 
Lemma~\ref{lem:helson} and Theorem~\ref{thm:carlemanhalf}.
\end{proof}
\begin{remark}
	Considering the square-free terms only of a Dirichlet series is in many 
cases sufficient to obtain sharp results, see for example \cite{BHS15}. Often, 
the reason for this is related to the fact that the square-free zeta function 
has the same behaviour as the zeta function $\zeta(s)$ near $s=1$, since
	\[\sum_{n=1}^\infty |\mu(n)| n^{-s} = \prod_{j=1}^\infty (1+p_j^{-s}) = 
\prod_{j=1}^\infty \frac{1-p_j^{-2s}}{1-p_j^{-s}} = 
\frac{\zeta(s)}{\zeta(2s)}.\]
\end{remark}

\section{Multiplicative Hankel forms} \label{sec:hankel} The multiplicative 
Hankel form \eqref{eq:mhankd} is said to be bounded if there is a constant 
$C<\infty$ such that 
\begin{equation}
	\label{eq:bmhankd} |\varrho(a,b)| = 
\left|\sum_{m=1}^\infty\sum_{n=1}^\infty a_m b_n \frac{\varrho_{mn}}{d(mn)} 
\right| \leq C \left(\sum_{m=1}^\infty 
\frac{|a_m|^2}{d(m)}\right)^\frac{1}{2}\left(\sum_{n=1}^\infty 
\frac{|b_n|^2}{d(n)}\right)^\frac{1}{2}. 
\end{equation}
The smallest such constant is the norm of $\varrho$. The symbol of the form 
$\varrho$ is the Dirichlet series $\varphi(s) =\sum_{n\geq1} 
\overline{\varrho_n} n^{-s}$. If $f$ and $g$ are Dirichlet series with 
coefficient sequences $a$ and $b$, respectively, then \eqref{eq:bmhankd} can be 
rewritten as $|H_\varphi(fg)|\leq C 
\|f\|_{\mathscr{A}^2}\|g\|_{\mathscr{A}^2}$, where we define
\[H_\varphi(fg) = \langle fg, \varphi \rangle_{\mathscr{A}^2} = 
\sum_{l=1}^\infty \left(\sum_{mn=l} a_m b_n\right)\,\frac{\varrho_l}{d(l)} = 
\sum_{m=1}^\infty \sum_{n=1}^\infty a_m b_n \frac{\varrho_{mn}}{d(mn)}.\]
Hence, the multiplicative Hankel form is bounded if and only if $H_\varphi$ is 
a bounded form on $\mathscr{A}^2 \times \mathscr{A}^2$.

We begin with the following example, giving the Bergman space analogue of the 
multiplicative Hilbert matrix studied in \cite{BPSSV}. Let $\mathscr{A}^2_0$ 
denote the subspace of $\mathscr{A}^2$ consisting of Dirichlet series $f(s) = 
\sum_{n\geq1} a_n n^{-s}$ such that $a_1 = f(+\infty) = 0$. As in \cite{BPSSV}, 
it is natural to work with Dirichlet series without constant term for 
convergence reasons. We consider the form 
\begin{equation}
	\label{eq:hilberttype} H(fg) = \int_{1/2}^\infty 
f(\sigma)g(\sigma)\left(\sigma-\frac{1}{2}\right)\,d\sigma, \qquad f,g \in 
\mathscr{A}^2_0. 
\end{equation}
\begin{thm}
	\label{thm:hilberttype} The bilinear form \eqref{eq:hilberttype} is a 
multiplicative Hankel form with symbol
	\[\varphi(s) = \int_{1/2}^\infty \left(\zeta(s+\sigma)^2 -1 
\right)\left(\sigma-\frac{1}{2}\right)\,d\sigma = \sum_{n=2}^\infty 
\frac{d(n)}{\sqrt{n}(\log{n})^2}n^{-s}.\]
	The form $H_\varphi$ is bounded, but not compact, on 
$\mathscr{A}^2_0\times \mathscr{A}^2_0$. 
\end{thm}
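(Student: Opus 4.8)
The plan is to treat the three assertions in turn: the identification of the symbol, boundedness, and non-compactness.

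For the symbol I would expand the product of Dirichlet series directly. Writing $f(s)=\sum_{m\ge 2}a_m m^{-s}$ and $g(s)=\sum_{n\ge2}b_n n^{-s}$, the pointwise bound \eqref{eq:Appointwise} guarantees absolute convergence of $f(\sigma)g(\sigma)$ for $\sigma>1/2$, so Fubini lets me integrate term by term. The only computation needed is $\int_{1/2}^\infty l^{-\sigma}(\sigma-\tfrac12)\,d\sigma=(\sqrt l\,(\log l)^2)^{-1}$, obtained by the substitution $\sigma=\tfrac12+t$ together with $\int_0^\infty t e^{-at}\,dt=a^{-2}$. Collecting the terms with $mn=l$ identifies the Hankel coefficient $\varrho_l/d(l)=(\sqrt l(\log l)^2)^{-1}$, that is $\varrho_l=d(l)/(\sqrt l(\log l)^2)$, which is precisely the stated symbol. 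The $\zeta^2$-integral representation then follows from the same term-by-term integration applied to $\zeta(s+\sigma)^2-1=\sum_{n\ge2}d(n)n^{-s-\sigma}$.

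For boundedness I would first observe that Cauchy--Schwarz applied to \eqref{eq:hilberttype} reduces the claim to the single embedding $\int_{1/2}^\infty |f(\sigma)|^2(\sigma-\tfrac12)\,d\sigma\lesssim\|f\|_{\mathscr A^2}^2$ for $f\in\mathscr A^2_0$. I would split the integral at $\sigma=1$. On $[1,\infty)$ the estimate is routine: the refined pointwise bound $|f(\sigma)|^2\le\|f\|_{\mathscr A^2}^2(\zeta(2\sigma)^2-1)\lesssim\|f\|_{\mathscr A^2}^2\,2^{-2\sigma}$, from Cauchy--Schwarz against the reproducing kernel of $\mathscr A^2_0$, makes $\int_1^\infty|f(\sigma)|^2(\sigma-\tfrac12)\,d\sigma$ converge with the right constant. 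The genuine difficulty is the piece on $(1/2,1)$: there the same pointwise bound only yields $|f(\sigma)|^2(\sigma-\tfrac12)\lesssim(2\sigma-1)^{-1}$, which is not integrable, and one checks by the divisor-sum heuristic that any estimate using $(\log mn)^2\ge 4\log m\log n$ is far too lossy to converge. This is the main obstacle, and I would resolve it by invoking a local embedding theorem: the measure $(\sigma-\tfrac12)\,d\sigma$ restricted to a neighbourhood of the critical line is a Carleson measure for $\mathscr A^2$, in the spirit of the half-plane embeddings of \cite{Olsen11} and the positive-definite characterization of \cite{PP16}.

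Non-compactness I expect to be the easier part, handled by testing on reproducing kernels that concentrate at the boundary. For $\sigma_0=\tfrac12+\delta$ set $f_\delta=K_0(\cdot,\sigma_0)/\|K_0(\cdot,\sigma_0)\|$, where $K_0(s,\sigma_0)=\zeta(s+\sigma_0)^2-1$ is the reproducing kernel of $\mathscr A^2_0$ and $\|K_0(\cdot,\sigma_0)\|^2=\zeta(2\sigma_0)^2-1$. Testing against Dirichlet polynomials shows $f_\delta\rightharpoonup 0$ weakly as $\delta\to0^+$, since $\langle g,f_\delta\rangle=g(\sigma_0)/\|K_0(\cdot,\sigma_0)\|\to 0$ while $\|f_\delta\|=1$. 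Using the local behaviour $\zeta(\sigma+\sigma_0)\sim(\sigma-\tfrac12+\delta)^{-1}$ near $\sigma=\tfrac12$ and the elementary evaluation $\int_0^\infty u(u+\delta)^{-4}\,du=(6\delta^2)^{-1}$, one finds $H(f_\delta f_\delta)\to c>0$. Since a bounded Hankel form that fails to vanish on a weakly null sequence cannot be compact, $H_\varphi$ is not compact. I would present the symbol computation first, then boundedness, flagging the $(1/2,1)$ Carleson estimate as the crux, and close with the kernel test for non-compactness.
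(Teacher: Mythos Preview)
Your proposal is correct and follows essentially the same route as the paper: identify the symbol by term-by-term integration, prove boundedness by Cauchy--Schwarz plus the split at $\sigma=1$ with the pointwise bound handling $[1,\infty)$ and the Carleson embedding from \cite{Olsen11} handling $(1/2,1)$, and disprove compactness by testing on normalized reproducing kernels at $1/2+\delta$. The only minor remark is that the reference to \cite{PP16} is superfluous here---\cite{Olsen11} (specifically Theorem~3 and Example~4 there) already gives exactly the embedding you need.
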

\begin{proof}
	To see that $\varphi$ is the symbol, one can either compute $H(fg)$ at 
the level of coefficients or use that $\zeta(s+\overline{w})^2-1$ is the 
reproducing kernel of $\mathscr{A}^2_0$. To see that $H$ is bounded, we first 
use the Cauchy--Schwarz inequality,
	\[|H(fg)| \leq \left(\int_{1/2}^\infty |f(\sigma)|^2\, 
\left(\sigma-\frac{1}{2}\right)d\sigma\right)^\frac{1}{2}\left(\int_{1/2}
^\infty |g(\sigma)|^2\, 
\left(\sigma-\frac{1}{2}\right)d\sigma\right)^\frac{1}{2}.\]
	By symmetry, we only need to consider one of the factors. We split the 
integral at $\sigma=1$.
	\[\int_{1/2}^\infty |f(\sigma)|^2\, 
\left(\sigma-\frac{1}{2}\right)d\sigma = 
\left(\int_{1/2}^1+\int_1^\infty\right) 
|f(\sigma)|^2\,\left(\sigma-\frac{1}{2}\right) d\sigma.\]
	The first integral is bounded by a constant multiple of 
$\|f\|_{\mathscr{A}^2}^2$, as follows from \cite[Thm.~3 and 
Example~4]{Olsen11}. For the second integral, we have by the pointwise estimate 
\eqref{eq:Appointwise} that
	\[|f(\sigma)|^2 \leq \|f\|_{\mathscr{A}^2}^2 \left(\sum_{n=2}^\infty 
d(n) n^{-2\sigma}\right) \leq (2+o(1))4^{-\sigma} \|f\|_{\mathscr{A}^2}^2,\]
	where we in the final inequality used that $\sigma\geq 1$. To show that 
$H_\varphi$ is not compact, let $k_\varepsilon(s)$ denote the normalized 
reproducing kernel of $\mathscr{A}^2_0$ at 
	the point $1/2 + \varepsilon/2$,
	
$$k_{\varepsilon}(s)=\frac{\zeta^2(s+1/2+\varepsilon/2)-1}{\sqrt{
\zeta^2(1+\varepsilon)-1}}.$$ The functions $k_\varepsilon$ converge weakly to 
$0$ as $\varepsilon \to 0$, since they converge to $0$ on every compact subset 
of $\mathbb{C}_{1/2}$. By the fact that
	\[\zeta(s) = \frac{1}{s-1} + O(1)\]
	for $\mre(s)>1$ close to $1$, we get for, say $1/2 < \sigma < 1$, that
	\[k_\varepsilon(\sigma) = \frac{(\sigma+1/2+\varepsilon/2-1)^{-2} + 
O(1)}{(1+\varepsilon-1)^{-1}+O(1)} = \varepsilon 
\left(\frac{1}{(\sigma-1/2+\varepsilon/2)^2}+O(1)\right).\]
	Setting $f = g = k_\varepsilon$, we find that
	\[H(fg) = \varepsilon^2 \left(\int_{1/2}^1 
\left(\frac{1}{(\sigma-1/2+\varepsilon/2)^4}+O(1)\right) 
\left(\sigma-\frac{1}{2}\right)\,d\sigma + O(1)\right) \gtrsim 1, \]
	showing that $H$ is not compact. 
\end{proof}

Since the Bohr lift is multiplicative, it holds that
\[\langle fg, \varphi \rangle_{\mathscr{A}^2} = \langle 
\mathscr{B}f\mathscr{B}g,\mathscr{B}\varphi \rangle_{A^2(\mathbb{D}^\infty)}.\]
For the remainder of this section we will work in the polydisc, and we 
therefore tacitly identify the Dirichlet series $f$ with its Bohr lift 
$\mathscr{B}f$. Hence, we consider symbols of the form
\[\varphi(z) = \sum_{n=1}^\infty \overline{\varrho_n} z^{\kappa(n)},\]
and define $H_\varphi(fg) = \langle fg, \varphi 
\rangle_{A^2(\mathbb{D}^\infty)}$, for $f,g \in A^2(\mathbb{D}^\infty)$. 

If $\varphi$ defines a bounded functional on $A^1(\mathbb{D}^\infty)$, then it 
follows from the Cauchy--Schwarz inequality that
\[|H_\varphi(fg)| = |\langle fg, \varphi \rangle_{A^2}| \leq 
\|\varphi\|_{(A^1)^\ast} \|fg\|_{A^1} \leq \|\varphi\|_{(A^1)^\ast} 
\|f\|_{A^2}\|g\|_{A^2},\]
i.e. the Hankel form $H_\varphi$ is bounded on $A^2(\mathbb{D}^\infty) \times 
A^2(\mathbb{D}^\infty)$ in this case. Our first goal is to show that the 
converse does not hold. We define the weak product $A^2(\mathbb{D}^\infty) 
\odot A^2(\mathbb{D}^\infty)$ as the closure of all finite sums $f = \sum_k g_k 
h_k$, $g_k, h_k \in A^2(\mathbb{D}^\infty)$, under the norm
\[\|f\|_{A^2(\mathbb{D}^\infty)\odot A^2(\mathbb{D}^\infty)} = \inf \sum_k 
\|g_k\|_{A^2(\mathbb{D}^\infty)}\|h_k\|_{A^2(\mathbb{D}^\infty)}.\]
Here the infimum is taken over all finite representations $f = \sum_k g_k h_k$. 
Note that $\|f\|_{A^1(\mathbb{D}^\infty)} \leq 
\|f\|_{A^2(\mathbb{D}^\infty)\odot A^2(\mathbb{D}^\infty)}$.
\begin{lem}
	\label{lem:bsymb} Suppose that $\varphi$ generates a Hankel form on 
$A^2(\mathbb{D}^\infty)\times A^2(\mathbb{D}^\infty)$. Then
	\[\|H_\varphi\| = \|\varphi\|_{(A^2(\mathbb{D}^\infty)\odot 
A^2(\mathbb{D}^\infty))^\ast}.\]
	Every bounded Hankel form $H_\varphi$ extends to a bounded functional 
on $A^1(\mathbb{D}^\infty)$ if and only if there is a constant $C_\infty < 
\infty$ such that for any $f\in A^1(\DD^\infty)$,
	\[\|f\|_{A^2(\mathbb{D}^\infty) \odot A^2(\mathbb{D}^\infty)} \leq 
C_\infty \|f\|_{A^1(\mathbb{D}^\infty)}.\]
\end{lem}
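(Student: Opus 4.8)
The plan is to treat the two assertions in turn: the norm identity is a direct duality computation in the spirit of the projective tensor product, and the equivalence in the second statement follows from it together with the closed range theorem.

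For the norm identity, I would first note that $H_\varphi$, being linear in the product $fg$, extends by linearity to the dense subspace of finite sums $F=\sum_k g_k h_k$ inside $A^2(\mathbb{D}^\infty)\odot A^2(\mathbb{D}^\infty)$ via $\Lambda_\varphi(F)=\sum_k H_\varphi(g_k h_k)=\langle F,\varphi\rangle_{A^2}$. This is well defined because the value depends only on $F$ as an element of the weak product, not on the chosen representation. The estimate $|\Lambda_\varphi(F)|\leq \|H_\varphi\|\sum_k \|g_k\|_{A^2}\|h_k\|_{A^2}$, followed by taking the infimum over representations, gives $\|\Lambda_\varphi\|\leq\|H_\varphi\|$; the reverse inequality is immediate from $\|fg\|_{A^2\odot A^2}\leq\|f\|_{A^2}\|g\|_{A^2}$, since $|H_\varphi(fg)|=|\Lambda_\varphi(fg)|\leq\|\Lambda_\varphi\|\,\|f\|_{A^2}\|g\|_{A^2}$. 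Hence $\|H_\varphi\|=\|\varphi\|_{(A^2(\mathbb{D}^\infty)\odot A^2(\mathbb{D}^\infty))^\ast}$, after noting that polynomials are dense so that the pairing $\langle\,\cdot\,,\varphi\rangle_{A^2}$ is unambiguously defined on the completion.

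For the second part, let $\iota\colon A^2(\mathbb{D}^\infty)\odot A^2(\mathbb{D}^\infty)\to A^1(\mathbb{D}^\infty)$ be the inclusion; it is contractive by the stated inequality and has dense range, since each monomial $z^{\kappa(n)}=z^{\kappa(n)}\cdot 1$ is a product of two $A^2$ functions and analytic polynomials are dense in $A^1$. The \emph{if} direction is then elementary: assuming $\|f\|_{A^2\odot A^2}\leq C_\infty\|f\|_{A^1}$, any bounded $H_\varphi$ satisfies $|\langle f,\varphi\rangle_{A^2}|\leq\|H_\varphi\|\,\|f\|_{A^2\odot A^2}\leq C_\infty\|H_\varphi\|\,\|f\|_{A^1}$ on polynomials, so $\varphi$ extends to $(A^1)^\ast$ by density. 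For the \emph{only if} direction I would first identify $(A^2\odot A^2)^\ast$ with the bounded Hankel symbols: any $\Lambda\in(A^2\odot A^2)^\ast$ gives a bounded bilinear form $(f,g)\mapsto\Lambda(fg)$ depending only on the product $fg$, hence equal to some $H_\varphi$ with $\Lambda=\Lambda_\varphi$. The adjoint $\iota^\ast\colon(A^1)^\ast\to(A^2\odot A^2)^\ast$ is restriction, and the hypothesis that every bounded $H_\varphi$ extends to $(A^1)^\ast$ says exactly that $\iota^\ast$ is surjective (the extension and $\Lambda_\varphi$ agree on the dense polynomials). By the closed range theorem, surjectivity of $\iota^\ast$ forces $\iota$ to be bounded below, $\|f\|_{A^1}=\|\iota f\|_{A^1}\geq c\|f\|_{A^2\odot A^2}$, which is the desired bound with $C_\infty=c^{-1}$.

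The main obstacle I anticipate is the clean identification of $(A^2(\mathbb{D}^\infty)\odot A^2(\mathbb{D}^\infty))^\ast$ with the space of bounded Hankel symbols, together with the bookkeeping that turns the phrase ``every bounded symbol extends'' into genuine surjectivity of the adjoint $\iota^\ast$; once this is in place, the invocation of the closed range theorem (surjectivity of an adjoint being equivalent to the operator being bounded below) is routine. Secondary care is needed to check that the weak product is complete and that the $A^2$ pairing is well defined on it by density of polynomials, so that the extension appearing in the hypothesis is unique and really restricts to $\Lambda_\varphi$.
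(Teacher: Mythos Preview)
Your proposal is correct and follows essentially the same approach as the paper, which dispatches the first statement as a tautology and derives the second from ``the closed graph theorem and duality'' with a reference to \cite{BP15,Helson06}. Your use of the closed range theorem in place of the closed graph theorem is a harmless variant (both are consequences of the open mapping theorem), and you have accurately identified and filled in the details the paper leaves implicit.
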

\begin{proof}
	The first statement is a tautology. The weak product space 
$A^2(\mathbb{D}^\infty) \odot A^2(\mathbb{D}^\infty)$ is a Banach space, and 
therefore the second statement follows from the closed graph theorem and 
duality (see \cite{BP15,Helson06}). 
\end{proof}

Factorization and weak factorization of Hardy and Bergman spaces have a long 
history. Strong factorization for $H^1(\mathbb{D})$ was treated by Nehari 
\cite{Nehari57}, and the analogous factorization for $A^1(\mathbb{D})$ was 
given by Horowitz \cite{Horowitz77}. Every $f$ in $H^1(\mathbb{D})$ or 
$A^1(\mathbb{D})$ can be written as a single product $f = gh$, for $g,h$ in 
$H^2(\mathbb{D})$ or $A^2(\mathbb{D})$, respectively. In Nehari's theorem it is 
even possible to choose $g$ and $h$ such that 
$\|f\|_{H^1(\mathbb{D})}=\|g\|_{H^2(\mathbb{D})}\|h\|_{H^2(\mathbb{D})}$. The 
same is not possible in the factorization of $A^1(\mathbb{D})$, a simple 
observation we do not find recorded in the literature.

Factorization on the polydisc $\mathbb{D}^d$ is a much subtler matter, even 
when $1 < d < \infty$. Strong factorization is certainly not possible, but in 
\cite{FL02,LT09} it was shown that the corresponding weak factorization holds,
$$H^1(\mathbb{D^d}) = H^2(\mathbb{D}^d) \odot H^2(\mathbb{D}^d), \qquad d < 
\infty.$$ 
The Bergman space analogue was established in \cite{Constantin08},
$$A^1(\mathbb{D}^d) = A^2(\mathbb{D}^d) \odot A^2(\mathbb{D}^d), \qquad d < 
\infty.$$ 
In \cite{OCS12} it was shown that the best constant $C_d$ in the factorization,
\[\|f\|_{H^2(\mathbb{D}^d)\odot H^2(\mathbb{D}^d)} \leq C_d 
\|f\|_{H^1(\mathbb{D}^d)},\]
satisfies growth estimate $C_{d} \geq (\pi^2/8)^{d/4}$ when $d$ is an even 
integer. This immediately implies that the weak factorization 
$H^1(\mathbb{D}^\infty) = H^2(\mathbb{D}^\infty) \odot H^2(\mathbb{D}^\infty)$ 
is impossible. By tensorization, it is explained in \cite[Sec.~3]{BP15} that 
$C_{kd} \geq C_d^k$ for every positive integer $k$, a result which effortlessly 
carries over to the context of Bergman spaces. Hence we have the following.
\begin{thm}
	\label{thm:weakfac} Let $C_d$ denote the best constant in the inequality
	\[\|f\|_{A^2(\mathbb{D}^d)\odot A^2(\mathbb{D}^d)} \leq C_d 
\|f\|_{A^1(\mathbb{D}^d)},\]
	for $d=1,2,\ldots$. Then
	\[C_d \geq \left(\frac{9}{8}\right)^{d/2}.\]
	In particular, the factorization in the unit disc is not 
norm-preserving, and therefore the weak factorization
	$$A^1(\mathbb{D}^\infty) = A^2(\mathbb{D}^\infty) \odot 
A^2(\mathbb{D}^\infty)$$
	 does not hold.
\end{thm}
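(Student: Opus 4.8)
The plan is to reduce the entire statement to the one-dimensional estimate $C_1 \geq \sqrt{9/8}$ and then invoke the tensorization inequality $C_{kd} \geq C_d^k$ from \cite[Sec.~3]{BP15}, already recalled above. Taking $d = 1$ there gives $C_d \geq C_1^d \geq (9/8)^{d/2}$, which is the claimed bound. The failure of the weak factorization on $\mathbb{D}^\infty$ then follows immediately: if $A^1(\mathbb{D}^\infty) = A^2(\mathbb{D}^\infty) \odot A^2(\mathbb{D}^\infty)$ held, the closed graph theorem would produce a finite constant $C_\infty$ dominating $\|f\|_{A^2 \odot A^2}$ by $\|f\|_{A^1}$ for all $f$, and restricting to functions of the first $d$ variables would force $C_d \leq C_\infty$ for every $d$, contradicting $C_d \to \infty$.

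To prove $C_1 \geq \sqrt{9/8}$, I would test the factorization inequality on the single monomial $f(w) = w$. Its $A^1$-norm is computed directly in polar coordinates as $\|w\|_{A^1(\mathbb{D})} = 2/3$, while the trivial factorization $w = 1 \cdot w$ gives the upper bound $\|w\|_{A^2 \odot A^2} \leq \|1\|_{A^2}\|w\|_{A^2} = 1/\sqrt{2}$, using $\|w^j\|_{A^2(\mathbb{D})}^2 = 1/(j+1)$ from \eqref{eq:A2anorm}. The crux is the matching lower bound $\|w\|_{A^2 \odot A^2} \geq 1/\sqrt{2}$, which I would obtain by duality. By Lemma~\ref{lem:bsymb} the weak-product norm is dual to the Hankel-form norm, so it suffices to exhibit a single symbol realizing a good ratio; I would take $\varphi(w) = w$, for which $\langle w, \varphi \rangle_{A^2} = \|w\|_{A^2}^2 = 1/2$, and then compute $\|H_\varphi\|$.

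For this symbol $H_\varphi(fg) = \langle fg, w \rangle_{A^2} = \tfrac{1}{2}(a_0 b_1 + a_1 b_0)$ depends only on the degree-zero and degree-one coefficients of $f = \sum a_j w^j$ and $g = \sum b_j w^j$. Maximizing $|a_0 b_1 + a_1 b_0|$ subject to $\|f\|_{A^2} = \|g\|_{A^2} = 1$ is a two-by-two problem: after the substitution $\tilde{a}_1 = a_1/\sqrt{2}$ (and likewise for $b$) the constraints become Euclidean and the form becomes $\sqrt{2}\,(\tilde{a}_0 \tilde{b}_1 + \tilde{a}_1 \tilde{b}_0)$, whose maximum over unit vectors is $\sqrt{2}$. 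Hence $\|H_\varphi\| = \tfrac{1}{2}\sqrt{2} = 1/\sqrt{2}$, and duality gives $\|w\|_{A^2 \odot A^2} \geq (1/2)/(1/\sqrt{2}) = 1/\sqrt{2}$. Combining the two bounds yields $\|w\|_{A^2 \odot A^2} = 1/\sqrt{2}$, and therefore
\[
C_1 \geq \frac{\|w\|_{A^2 \odot A^2}}{\|w\|_{A^1(\mathbb{D})}} = \frac{1/\sqrt{2}}{2/3} = \frac{3}{2\sqrt{2}} = \sqrt{\tfrac{9}{8}}.
\]

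The only genuinely delicate point is the norm identity $\|H_\varphi\| = 1/\sqrt{2}$: one must check that enlarging the test polynomials beyond degree one cannot increase the form. This is immediate here, since $\varphi = w$ annihilates all coefficients of the product $fg$ except the one at $w^1$, so the supremum is attained on degree-one polynomials and reduces to the elementary two-dimensional optimization above. Everything else is either a one-line integral, a direct citation (the tensorization inequality $C_{kd} \geq C_d^k$ and the duality of Lemma~\ref{lem:bsymb}), or the standard embedding argument underlying the $\mathbb{D}^\infty$ conclusion.
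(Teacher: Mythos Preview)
Your proposal is correct and follows essentially the same approach as the paper: both reduce to $C_1 \geq 3/(2\sqrt{2})$ via the tensorization inequality $C_{kd}\geq C_d^k$, and both establish the one-dimensional bound by duality with the symbol $\varphi(w)=w$ (the paper normalizes to $\sqrt{2}\,w$), computing the norm of the associated rank-two Hankel form explicitly. Your additional observation that the trivial factorization $w=1\cdot w$ gives a matching upper bound, so that $\|w\|_{A^2\odot A^2}=1/\sqrt{2}$ exactly, is a pleasant extra not present in the paper but not needed for the inequality.
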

\begin{proof}
	In view of the discussion preceeding the theorem, it is sufficient to 
prove that $C_1 \geq 3/(2\sqrt{2})$. For every polynomial $\varphi$, we get 
from duality that
	\[C_1 \geq 
\frac{\|\varphi\|_{(A^1(\mathbb{D}))^\ast}}{\|\varphi\|_{(A^2(\mathbb{D})\odot 
A^2(\mathbb{D}))^\ast}} \geq 
\frac{\|\varphi\|_{A^2(\mathbb{D})}^2}{\|\varphi\|_{A^1(\mathbb{D})}\|\varphi\|_
{(A^2(\mathbb{D})\odot A^2(\mathbb{D}))^\ast}},\]
	where we have estimated the $(A^1(\mathbb{D}))^\ast$-norm by testing 
$\varphi$ against itself. As in Lemma~\ref{lem:bsymb}, we have that
	\[\|\varphi\|_{(A^2(\mathbb{D})\odot A^2(\mathbb{D}))^\ast} = 
\|H_\varphi\|_{A^2(\mathbb{D})\times A^2(\mathbb{D})}.\]
	We choose $\varphi(w) = \sqrt{2}\,w$. Clearly 
$\|\varphi\|_{A^2(\mathbb{D})}=1$. The matrix of $H_\varphi$ with respect to 
the standard basis of $A^2(\mathbb{D})$ is
	\[
	\begin{pmatrix}
		0 & 1 \\
		1 & 0 
	\end{pmatrix}
	,\]
	so we find that $\|H_\varphi\|_{A^2(\mathbb{D})\times 
A^2(\mathbb{D})}=1$. We are done, since
	\[\|\varphi\|_{A^1(\mathbb{D})} = 2\sqrt{2}\int_0^1 r^2\,dr = 
\frac{2\sqrt{2}}{3}. \qedhere \]
\end{proof}

It would be interesting to decide if the symbol of the Hilbert--type form 
considered in Theorem~\ref{thm:hilberttype}, which lifts to
\begin{equation} \label{eq:hilbertlift}
	\varphi(z) = \sum_{n=2}^\infty 
\frac{d(n)}{\sqrt{n}(\log{n})^2}\,z^{\kappa(n)},
\end{equation} defines a bounded linear functional on $H^1(\mathbb{D}^\infty)$. 
We are unable to settle this problem, but offer the following two observations. 
First, if $f$ is an analytic polynomial on $\mathbb{D}^\infty$ such that 
$f(0)=0$, we may write 
\[\langle f, \varphi \rangle_{A^2(\mathbb{D}^\infty)} = \int_{1/2}^\infty 
\left(\mathscr{B}^{-1}f\right)(\sigma+it)\, 
\left(\sigma-\frac{1}{2}\right)d\sigma.\] 
If we could prove the embedding 
$\|f\|_{A^1_{\operatorname{i}}(\mathbb{C}_{1/2})} \leq 
\widetilde{C}\|f\|_{\mathscr{A}^1}$, which is a stronger version of the second 
statement in Theorem~\ref{thm:carlemanhalf}, then it would follow by simple 
Carleson measure argument that \eqref{eq:hilbertlift} defines a bounded linear 
functional on $H^1(\mathbb{D}^\infty)$, through the (inverse) Bohr lift.

Our second observation is contained in the following result. 
\begin{thm} \label{thm:duality}
	Let $\varphi$ be as in \eqref{eq:hilbertlift}. Then $\varphi$ defines a 
bounded functional on $A^p(\mathbb{D}^\infty)$ for every $1<p<\infty$.
\end{thm}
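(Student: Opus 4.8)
The plan is to test $\varphi$ against $f$ through the integral representation recorded just before the statement, and thereby reduce the problem to a one-variable Carleson-type embedding on the half-line. First I would note that $\varphi$ has vanishing constant term ($\varrho_1=0$), so the pairing $\langle f,\varphi\rangle_{A^2(\DD^\infty)}$ is unchanged when $f$ is replaced by $f-f(0)$; since \eqref{eq:pestinf} at $z=0$ gives $|f(0)|\leq\|f\|_{A^p(\DD^\infty)}$, it suffices to bound the functional on analytic polynomials $f$ with $f(0)=0$ and then invoke density. For such $f$, writing $F=\mathscr{B}^{-1}f$ and using the elementary identity $\int_{1/2}^\infty(\sigma-1/2)\,n^{-\sigma}\,d\sigma=1/(\sqrt{n}(\log n)^2)$, one gets
\[
\langle f,\varphi\rangle_{A^2(\DD^\infty)}=\int_{1/2}^\infty F(\sigma)\left(\sigma-\frac{1}{2}\right)d\sigma ,
\]
so that $|\langle f,\varphi\rangle|\leq\int_{1/2}^\infty|F(\sigma)|\,(\sigma-\frac12)\,d\sigma$. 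Everything thus reduces to the embedding $\|F\|_{L^1(d\nu)}\lesssim_p\|F\|_{\mathscr{A}^p}$, where $d\nu(\sigma)=(\sigma-\frac12)\,d\sigma$ on $(1/2,\infty)\subset\mathbb{C}_{1/2}$.

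I would estimate the contributions of $(1/2,2]$ and $[2,\infty)$ separately. On the first piece the pointwise bound \eqref{eq:Appointwise} gives $|F(\sigma)|\leq\zeta(2\sigma)^{2/p}\|F\|_{\mathscr{A}^p}$, and since $\zeta(2\sigma)\simeq(\sigma-1/2)^{-1}$ as $\sigma\to1/2^+$, the integrand is controlled by $(\sigma-1/2)^{1-2/p}\|F\|_{\mathscr{A}^p}$, which is integrable near $1/2$ precisely when $p>1$. For the tail I would exploit that $F$ has no constant term by first passing to $\mathscr{A}^2$: when $p\geq2$ directly, since then $\mathscr{A}^p\subset\mathscr{A}^2$ with $\|F\|_{\mathscr{A}^2}\leq\|F\|_{\mathscr{A}^p}$, and when $1<p<2$ through the bounded translation $T_1\colon\mathscr{A}^p\to\mathscr{A}^2$ furnished by Theorem~\ref{thm:weisslerhalf}, setting $G=T_1F$ (again of vanishing constant term) with $\|G\|_{\mathscr{A}^2}\lesssim_p\|F\|_{\mathscr{A}^p}$. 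In either case a Cauchy--Schwarz estimate using \eqref{eq:A2infnorm} yields $|H(\tau)|\leq\|H\|_{\mathscr{A}^2}(\zeta(2\tau)^2-1)^{1/2}$ for the relevant function $H$, and for $\tau\geq1$ one has $\zeta(2\tau)^2-1\lesssim2^{-2\tau}$; this produces $|F(\sigma)|\lesssim_p 2^{-\sigma}\|F\|_{\mathscr{A}^p}$ for $\sigma\geq2$, whence $\int_2^\infty(\sigma-\frac12)|F(\sigma)|\,d\sigma\lesssim_p\|F\|_{\mathscr{A}^p}$. Combining the two pieces gives $|\langle f,\varphi\rangle|\lesssim_p\|f\|_{A^p(\DD^\infty)}$.

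The delicate point is the tail rather than the singularity at $\sigma=1/2$: the crude bound $|F(\sigma)|\leq\zeta(2\sigma)^{2/p}\|F\|_{\mathscr{A}^p}$ tends to $\|F\|_{\mathscr{A}^p}$ as $\sigma\to\infty$ and so cannot be integrated against the linearly growing weight $(\sigma-\frac12)$; genuine decay is available only because $f(0)=0$, and I recover it by transferring into $\mathscr{A}^2$ and reading off the exponential decay from the coefficient expansion. The restriction $1<p<\infty$ is sharp for this method and enters solely through the integrability of $(\sigma-1/2)^{1-2/p}$ at the endpoint $\sigma=1/2$, which fails exactly at $p=1$ --- consistent with the fact, noted before the statement, that boundedness of $\varphi$ on $H^1(\DD^\infty)$ (the $A^1$ endpoint) remains open.
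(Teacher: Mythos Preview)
Your argument is correct, and it follows a genuinely different route from the paper's own proof. The paper works entirely at the level of coefficients: it pairs $f$ and $\varphi$ by Cauchy--Schwarz with the weight $(p/2)^{\Omega(n)}$, bounds the $f$-factor by Lemma~\ref{lem:weisslerpoly}, and then must show that
\[
\sum_{n=2}^\infty \left(\frac{2}{p}\right)^{\Omega(n)}\frac{d(n)}{n(\log n)^4}<\infty,
\]
which requires a nontrivial Selberg--Delange-type average-order estimate (their \eqref{eq:avgord}). You instead exploit the integral representation of $\varphi$ noted just before the theorem and reduce to the one-parameter Carleson-type embedding $\int_{1/2}^\infty |F(\sigma)|(\sigma-1/2)\,d\sigma\lesssim_p\|F\|_{\mathscr{A}^p}$, handling the singularity at $\sigma=1/2$ with the sharp pointwise bound \eqref{eq:Appointwise} and the tail via translation into $\mathscr{A}^2$ (Theorem~\ref{thm:weisslerhalf}) together with the elementary decay $\zeta(2\tau)^2-1\lesssim 4^{-\tau}$ for $\tau\geq1$.

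Your approach is more elementary in that it entirely sidesteps the analytic-number-theory input; it also makes transparent where $p>1$ enters, namely the integrability of $(\sigma-1/2)^{1-2/p}$ near the abscissa. The paper's coefficient approach, on the other hand, is less tied to the specific integral form of this particular $\varphi$ and would adapt more readily to other explicit symbols. Both proofs ultimately rest on Weissler-type contractivity, but the paper uses it on the polydisc side (Lemma~\ref{lem:weisslerpoly}) while you use its half-plane incarnation (Theorem~\ref{thm:weisslerhalf}).
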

\begin{proof}
	This is trivial when $p\geq2$, since $\varphi \in 
H^2(\mathbb{D}^\infty)$. Let us therefore fix $1<p<2$, and suppose that $f(z) = 
\sum_{n\geq1} a_n z^{\kappa(n)}$ is in $A^p(\mathbb{D}^\infty)$. Then it 
follows from the Cauchy--Schwarz inequality and Lemma~\ref{lem:weisslerpoly} 
with $r_j = \sqrt{p/2}$ that
	\begin{align*}
		\left|\langle f, \varphi 
\rangle_{A^2(\mathbb{D}^\infty)}\right| &= \left|\sum_{n=2}^\infty a_n 
\,\frac{1}{\sqrt{n}(\log{n})^2}\right| \\
		&\leq \left(\sum_{n=2}^\infty 
\frac{|a_n|^2}{d(n)}\,\left(\frac{p}{2}\right)^{\Omega(n)}\right)^\frac{1}{2} 
\left(\sum_{n=2}^\infty 
\left(\frac{2}{p}\right)^{\Omega(n)}\frac{d(n)}{n(\log{n})^4}\right)^\frac{1}{2}
\\
		&\leq \|f\|_{A^p(\mathbb{D}^\infty)}\left(\sum_{n=2}^\infty 
\left(\frac{2}{p}\right)^{\Omega(n)}\frac{d(n)}{n(\log{n})^4}\right)^\frac{1}{2}
	\end{align*}
	where again $\Omega(n)$ denotes the number of prime factors of $n$. We 
may conclude if we can show that 
	\[\sum_{n=2}^\infty \frac{d(n)\alpha^{\Omega(n)}}{n(\log{n})^4} < 
\infty\]
	if $1<\alpha<2$. This follows at once from Abel summation and the 
estimate
	\begin{equation} \label{eq:avgord}
		\frac{1}{x}\sum_{n\leq x} d(n)\alpha^{\Omega(n)} = C_\alpha 
(\log{x})^{2\alpha-1}+ O\left((\log{x}^{2\alpha-2})\right).
	\end{equation}
	To demonstrate \eqref{eq:avgord}, we consider the associated Dirichlet 
series, for say $\mre(s)>1$, and factor out an appropriate power of the zeta 
function
	\begin{align*}
		f_\alpha(s) &= \sum_{n=1}^\infty d(n)\alpha^{\Omega(n)}n^{-s} = 
\prod_{j=1}^\infty \left(\frac{1}{1-\alpha p_j^{-s}}\right)^2 \\
		&= \zeta^{2\alpha}(s) \prod_{j=1}^\infty 
\left(\frac{(1-p_j^{-s})^\alpha}{1-\alpha p_j^{-s}}\right)^2 =: 
\zeta^{2\alpha}(s)g_\alpha(s).
	\end{align*}
	Note that since
	\[\left(\frac{(1-p_j^{-s})^\alpha}{1-\alpha p_j^{-s}}\right)^2 = 1 + 
(\alpha-1)\alpha\,p_j^{-2s} + O(p_j^{-3s}),\]
	the Dirichlet series $g_\alpha$ is absolutely convergent for
	\[\mre(s)> \max\left(1/2,\,\log_2{\alpha}\right).\]
	A standard residue integration argument (see 
e.g.~\cite[Ch.~II.5]{Tenenbaum}) now gives \eqref{eq:avgord} with $C_\alpha = 
g_\alpha(1)/\Gamma(2\alpha)$.
	\end{proof}

Next, we investigate Hilbert--Schmidt Hankel forms \eqref{eq:mhankd}, following 
\cite{Helson06}. Recall that on the finite polydisc $\mathbb{D}^d$, $d < 
\infty$, a symbol $\varphi$ generates a Hilbert--Schmidt Hankel form on 
$H^2(\mathbb{D}^d)\times H^2(\mathbb{D}^d)$ if and only if it generates a 
Hilbert--Schmidt Hankel form on $A^2(\mathbb{D}^d)\times A^2(\mathbb{D}^d)$. On 
the infinite polydisc we have the following result. 
Theorem~\ref{thm:carlemanhalf} is its essential ingredient.

\begin{thm}
	\label{thm:HSforms} If the Hankel form generated by $\varphi$ is 
Hilbert--Schmidt on $A^2(\mathbb{D}^\infty) \times A^2(\mathbb{D}^\infty)$, 
then $\varphi$ also generates a bounded functional on $A^1(\mathbb{D}^\infty)$. 
If $\varphi$ generates a Hilbert--Schmidt form on $H^2(\mathbb{D}^\infty) 
\times H^2(\mathbb{D}^\infty)$, then it generates a Hilbert--Schmidt form on 
$A^2(\mathbb{D}^\infty) \times A^2(\mathbb{D}^\infty)$, but the converse does 
not hold. 
\end{thm}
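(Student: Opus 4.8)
The plan is to reduce all three assertions to a single explicit computation of the Hilbert--Schmidt norm of $H_\varphi$ in each space, expressed through the coefficients $\varrho_l$ of the symbol $\varphi(z)=\sum_l\overline{\varrho_l}z^{\kappa(l)}$. First I would record that $\{z^{\kappa(n)}\}_{n\geq1}$ is an orthonormal basis of $H^2(\mathbb{D}^\infty)$, while $\{\sqrt{d(n)}\,z^{\kappa(n)}\}_{n\geq1}$ is an orthonormal basis of $A^2(\mathbb{D}^\infty)$. Since $H_\varphi(z^{\kappa(m)},z^{\kappa(n)})=\langle z^{\kappa(mn)},\varphi\rangle$, counting the representations $l=mn$ and invoking the convolution identity \eqref{eq:multconv} with $\alpha=\beta=2$ yields
\[
\|H_\varphi\|_{\mathrm{HS},H^2}^2=\sum_{m,n}|\varrho_{mn}|^2=\sum_{l=1}^\infty d(l)\,|\varrho_l|^2,\qquad \|H_\varphi\|_{\mathrm{HS},A^2}^2=\sum_{m,n}\frac{d(m)d(n)}{d(mn)^2}|\varrho_{mn}|^2=\sum_{l=1}^\infty \frac{d_4(l)}{d(l)^2}\,|\varrho_l|^2.
\]
These two identities are the engine for everything that follows.

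For the first assertion I would pair the functional $\langle f,\varphi\rangle_{A^2}=\sum_n a_n\varrho_n/d(n)$ against the second-order Carleman inequality. By Cauchy--Schwarz,
\[
\Big|\sum_{n}\frac{a_n\varrho_n}{d(n)}\Big|\leq\Big(\sum_n\frac{|a_n|^2}{d_4(n)}\Big)^{1/2}\Big(\sum_n\frac{d_4(n)}{d(n)^2}|\varrho_n|^2\Big)^{1/2},
\]
where the first factor equals $\|f\|_{A^2_4(\mathbb{D}^\infty)}\leq\|f\|_{A^1(\mathbb{D}^\infty)}$ by Lemma~\ref{lem:helson} (equivalently Theorem~\ref{thm:carlemanhalf}), and the second factor is the finite quantity $\|H_\varphi\|_{\mathrm{HS},A^2}$. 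This gives $|\langle f,\varphi\rangle|\leq\|H_\varphi\|_{\mathrm{HS},A^2}\,\|f\|_{A^1}$, the desired bounded functional. This is exactly Helson's Hardy-space argument, with the inequality $\|f\|_{A^2_4}\leq\|f\|_{A^1}$ replacing the classical $\|f\|_{A^2_2}\leq\|f\|_{H^1}$.

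For the comparison of the two Hilbert--Schmidt classes, the identities above reduce everything to comparing the multiplicative weights $d_4(l)/d(l)^2$ and $d(l)$. I would verify the pointwise bound $d_4(l)\leq d(l)^3$, which by multiplicativity need only be checked on prime powers, where it reads $\binom{k+3}{3}\leq(k+1)^3$; this gives $\|H_\varphi\|_{\mathrm{HS},A^2}\leq\|H_\varphi\|_{\mathrm{HS},H^2}$ and hence the forward implication. For the failure of the converse the key observation is that $d(l)^3/d_4(l)$ is unbounded: on a squarefree $l$ with $\omega$ distinct prime factors one has $d(l)=2^\omega$ and $d_4(l)=4^\omega$, so $d_4(l)/d(l)^2=1$ while $d(l)=2^\omega$. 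I would therefore take $\varphi$ supported on the primorials $l_\omega=p_1p_2\cdots p_\omega$, with $|\varrho_{l_\omega}|^2=2^{-\omega}/\omega$ and all other coefficients zero. Then the Bergman Hilbert--Schmidt sum is $\sum_\omega 2^{-\omega}/\omega<\infty$, so $H_\varphi$ is genuinely a (bounded) Hilbert--Schmidt form on $A^2(\mathbb{D}^\infty)$, whereas the Hardy sum is $\sum_\omega 2^\omega\cdot 2^{-\omega}/\omega=\sum_\omega 1/\omega=\infty$, so it is not Hilbert--Schmidt on $H^2(\mathbb{D}^\infty)$.

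The main obstacle is not any single hard estimate but rather getting the Hilbert--Schmidt norm computation exactly right, in particular tracking the divisor weights $d(n)$ coming from the non-orthonormality of the monomial basis in $A^2(\mathbb{D}^\infty)$ and correctly identifying $\sum_{mn=l}d(m)d(n)=d_4(l)$. Once this is in place, the first assertion is immediate from Carleman's inequality, and the two-sided comparison collapses to the elementary multiplicative facts $d_4\leq d^3$ and the divergence of $d^3/d_4$ along integers with many distinct prime factors; the only genuine idea is to concentrate the counterexample on the primorials, where the Bergman weight $d_4(l)/d(l)^2$ stays bounded while the Hardy weight $d(l)$ grows geometrically.
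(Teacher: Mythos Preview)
Your proposal is correct and follows essentially the same route as the paper: the same Hilbert--Schmidt norm computations via the orthonormal bases, the same Cauchy--Schwarz plus Lemma~\ref{lem:helson}/Theorem~\ref{thm:carlemanhalf} argument for the functional, and the same reduction of the comparison to the multiplicative inequality $d_4(l)\leq d(l)^3$ checked on prime powers. The only cosmetic difference is the counterexample: the paper uses integers $n_j$ that are products of $j$ \emph{disjoint} primes (so $n_1=2$, $n_2=3\cdot 5$, $n_3=7\cdot 11\cdot 13$, \ldots) and leaves the coefficients generic, whereas you use the primorials $l_\omega=p_1\cdots p_\omega$ with the specific choice $|\varrho_{l_\omega}|^2=2^{-\omega}/\omega$; both exploit exactly the same arithmetic fact that on squarefree integers with $\omega$ prime factors one has $d_4/d^2=1$ while $d=2^\omega$.
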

\begin{proof}
	First, we compute the Hilbert--Schmidt norm on $A^2(\mathbb{D}^\infty) 
\times A^2(\mathbb{D}^\infty)$ of the form $H_\varphi$ generated by the symbol 
$\varphi(s) = \sum_{n\geq1} \overline{\varrho_n} z^{\kappa(n)}$. An orthonormal 
basis for $A^2(\mathbb{D}^\infty)$ is given by
	\[e_n(z) = z^{\kappa(n)} \sqrt{d(n)}.\]
	Hence, 
	\begin{align*}
		\|H_\varphi\|_{S_2(A^2(\mathbb{D}^\infty) \times 
A^2(\mathbb{D}^\infty))}^2 &= \sum_{m=1}^\infty \sum_{n=1}^\infty 
|H_\varphi(e_m e_n)|^2 = \sum_{m=1}^\infty \sum_{n=1}^\infty 
\frac{|\varrho_{mn}|^2 d(m)d(n)}{[d(mn)]^2}\\
		&=\sum_{l=1}^\infty \frac{|\varrho_l|^2}{[d(l)]^2} \sum_{mn=l} 
d(m)d(n) = \sum_{l=1}^\infty |\varrho_l|^2 \frac{d_4(l)}{[d(l)]^2}, 
	\end{align*}
	where we have made use of \eqref{eq:multconv} after recalling the 
convention that $d_2 = d$. The first statement now follows from 
Theorem~\ref{thm:carlemanhalf}, since the Cauchy--Schwarz inequality implies 
that
	\[|\langle f, \varphi \rangle_{A^2(\mathbb{D}^\infty)}| = 
\left|\sum_{n=1}^\infty \frac{a_n \varrho_n}{d(n)} \right| \leq 
\left(\sum_{n=1}^\infty 
\frac{|a_n|^2}{d_4(n)}\right)^\frac{1}{2}\left(\sum_{n=1}^\infty |\varrho_n|^2 
\,\frac{d_4(n)}{[d(n)]^2}\right)^\frac{1}{2}.\]
	Similarly we have that
	\[\|H_\varphi\|_{S_2(H^2(\mathbb{D}^\infty) \times 
H^2(\mathbb{D}^\infty))}^2 = \sum_{n=1}^\infty |\varrho_n|^2 d(n).\]
	Note that when $n$ is a prime power $n = p_j^k$ we have that
	\[d_4(n) = \frac{(k+1)(k+2)(k+3)}{6} \leq (k+1)^3 = [d(n)]^3.\]
	Since both $d_4(n)$ and $d(n)$ are multiplicative functions, it follows 
that $d_4(n) \leq [d(n)]^3$ for every $n$. Hence the second statement is proved.
	
	To see that the converse of the second statement does not hold, 
consider the set $\mathscr{N} = \{n_1=2,\, n_2 = 3\cdot 5,\, n_3 = 7\cdot 11 
\cdot 13,\, \ldots\,\}$ and define $\varphi(s) = \sum_{n \in \mathscr{N}} 
\overline{\varrho_n} z^{\kappa(n)}$. Then we have that
	\begin{align*}
		\|H_\varphi\|_{S_2(A^2(\mathbb{D}^\infty) \times 
A^2(\mathbb{D}^\infty))}^2 &= \sum_{j=1}^\infty |\varrho_{n_j}|^2, \\
		\|H_\varphi\|_{S_2(H^2(\mathbb{D}^\infty) \times 
H^2(\mathbb{D}^\infty))}^2 &= \sum_{j=1}^\infty |\varrho_{n_j}|^2 2^j. \qedhere
	\end{align*}
\end{proof}

The final part of this section is devoted to showing that every Hankel form of 
the type \eqref{eq:mhankd} naturally corresponds to a Hankel form of the type 
\eqref{eq:mhank} with the same singular numbers. Let $D$ denote the diagonal 
operator in two variables, $Df(w)=f(w,w)$, for which we have the following 
observation. 
\begin{lem}
	\label{lem:Dnorm} The operator $D$ is a contraction from 
$H^2(\mathbb{D}^2)$ to $A^2(\mathbb{D})$. 
\end{lem}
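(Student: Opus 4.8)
The plan is to pass to Taylor coefficients and reduce the estimate to a single application of the Cauchy--Schwarz inequality, the essential point being that the number of monomials on each anti-diagonal matches the Bergman weight exactly. Write $f(z_1,z_2)=\sum_{j,k\geq0}a_{j,k}z_1^jz_2^k$, so that $\|f\|_{H^2(\mathbb{D}^2)}^2=\sum_{j,k\geq0}|a_{j,k}|^2$. Collapsing the two variables, $Df(w)=f(w,w)=\sum_{l=0}^\infty b_l w^l$, where $b_l=\sum_{j+k=l}a_{j,k}$.

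Next I would compute the target norm directly from \eqref{eq:A2anorm} with $\alpha=2$. Since $c_2(l)=\binom{l+1}{l}=l+1$, this gives
\[\|Df\|_{A^2(\mathbb{D})}^2=\sum_{l=0}^\infty\frac{|b_l|^2}{l+1}.\]

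The decisive observation is that the anti-diagonal $\{(j,k):j+k=l\}$ contains exactly $l+1=c_2(l)$ lattice points. Hence Cauchy--Schwarz yields $|b_l|^2\leq(l+1)\sum_{j+k=l}|a_{j,k}|^2$, so that $|b_l|^2/(l+1)\leq\sum_{j+k=l}|a_{j,k}|^2$ for every $l$. Summing over $l$ recombines the right-hand side into $\sum_{j,k}|a_{j,k}|^2$, and therefore $\|Df\|_{A^2(\mathbb{D})}\leq\|f\|_{H^2(\mathbb{D}^2)}$, which is the claim.

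There is no genuine obstacle in this argument; the only thing one must notice is the exact coincidence between the combinatorial count $l+1$ of monomials with $j+k=l$ and the Bergman coefficient $c_2(l)=l+1$. Conceptually this contraction is the restriction/adjoint side of the isometric embedding of $A^2(\mathbb{D})$ into $H^2(\mathbb{D}^2)$ recorded in the introduction: writing $g(w)=\sum_l g_l w^l$, one checks directly that $D^\ast g(z_1,z_2)=\sum_{j,k}\frac{g_{j+k}}{j+k+1}z_1^jz_2^k$ is an isometry from $A^2(\mathbb{D})$ into $H^2(\mathbb{D}^2)$, whence $\|D\|=\|D^\ast\|=1$. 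The coefficient computation above is, however, the most economical route.
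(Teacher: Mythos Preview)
Your proof is correct and is essentially identical to the paper's own argument: expand $f$ in Taylor coefficients, note that $\|Df\|_{A^2(\mathbb{D})}^2=\sum_{l}\frac{1}{l+1}\bigl|\sum_{j+k=l}a_{j,k}\bigr|^2$, and apply Cauchy--Schwarz on each anti-diagonal of length $l+1$. Your additional remark about the adjoint $D^\ast$ being an isometry anticipates the discussion of the map $E$ that the paper gives immediately after the lemma.
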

\begin{proof}
	This is proven in \cite{Rudin69}, but in an abstract formulation it 
dates back at least to Aronzajn \cite{Arons50}. The proof of our particular 
case is very easy and we include it here. Consider
	\[f(z_1,z_2) = \sum_{j=0}^\infty \sum_{k=0}^\infty a_{j,k} z_1^j z_2^k\]
	and use the Cauchy--Schwarz inequality to conclude that
	\[\|Df\|_{A^2(\mathbb{D})}^2 = \sum_{l=0}^\infty 
\frac{1}{l+1}\left|\sum_{j+k=l} a_{j,k}\right|^2 \leq \sum_{l=0}^\infty 
\sum_{j+k=l} |a_{j,k}|^2 = \|f\|_{H^2(\mathbb{D}^2)}^2.\qedhere\]
\end{proof}
The diagonal operator $D$ may be written as an integral operator using the 
reproducing kernel of $H^2(\mathbb{D}^2)$,
\[Df(w) = \int_{\mathbb{T}^2} 
f(z_1,z_2)\,\frac{1}{1-w\overline{z_1}}\frac{1}{1-w\overline{z_2}}\,
dm_1(z_1)dm_1(z_2).\]
Hence its adjoint operator $E\colon A^2(\mathbb{D}) \to H^2(\mathbb{D}^2)$ is 
given by
\[Eg(z_1,z_2) = 
\int_{\mathbb{D}}g(w)\,\frac{1}{1-z_1\overline{w}}\frac{1}{1-z_2\overline{w}}\,
dA(w).\]
If $f$ and $g$ are in $A^2(\mathbb{D})$, then
\[\langle Ef, E g \rangle_{H^2(\mathbb{D}^2)} = \langle f, g 
\rangle_{A^2(\mathbb{D})},\]
that is, $E$ is an isometry. Clearly, the composition $DE$ is the identity 
operator on $A^2(\mathbb{D})$. Hence we have identified $A^2(\mathbb{D})$ with 
the subspace $X = E A^2(\mathbb{D})$ of $H^2(\mathbb{D}^2)$ (although perhaps 
it would be more appropriate to think of it as the factor space induced by the 
map $D$). The projection $P\colon H^2(\mathbb{D}^2)\to X$ is given by $P=ED$. 
Note that $P$ averages the coefficients of monomials of same degree. Precisely, 
if $f(z) = \sum_{j,k\geq0} a_{j,k}z_1^j z_2^k$, then
\[Pf(z_1,z_2) = \sum_{j=0}^\infty \sum_{k=0}^\infty A_{j+k} z_1^j z_2^k, \qquad 
\text{ where } \qquad A_l = \frac{1}{l+1}\sum_{j+k=l} a_{j,k}.\]
Clearly, $D(fg) = D(f)D(g)$, but $E$ does not have this property. For example, 
if $g(w)=w$, then
\[Eg(z_1,z_2) = \frac{z_1+z_2}{2} \qquad \text{and} \qquad E(g^2)(z_1,z_2) = 
\frac{z_1^2 + z_1z_2 + z_2^2}{3},\]
so that $E(g)E(g) \neq E(g^2)$. 

Let us now turn to the relationship between the operator $E$ and Hankel forms. 
To fix the notation, let $Y$ be a Hilbert space with an orthonormal basis 
$\{e_j\}_{j\geq1}$. For a bilinear form $H \colon Y \times Y \to \mathbb{C}$, 
let $s_n(H)$ denote its $n$th singular value, i.e. $$s_n(H) = \inf\{\|H - 
K\|_{Y \times Y} \, : \, \rank K \leq n \},$$ where the rank of a bilinear form 
$K : Y \times Y \to \mathbb{C}$ is given by $$\rank K = \codim \ker K = \codim 
\{f \in Y \, : \, K(f,g) = 0 \textrm{ for all } g \in Y\}. $$ Of course, 
$s_n(H)$ is the same as the $n$th singular value of the operator $\{H(e_j, 
e_k)\}_{j,k\geq1} : \ell^2 \to \ell^2$. The $p$-Schatten norm of $H$, $0 < p < 
\infty$, is given by $$\|H\|_{S_p(Y \times Y)}^p = \sum_{n=0}^\infty 
|s_n(H)|^p.$$ When $p = 2$ we obtain the Hilbert-Schmidt norm, which can also be 
computed as the square sum of the coefficients,
$$\|H\|_{S_2(Y \times Y)}^2 = \sum_{n=0}^\infty |s_n(H)|^2 = 
\sum_{j=1}^\infty\sum_{k=1}^\infty |H(e_j, e_k)|^2.$$ 
We have the following result. 
\begin{lem}
	\label{lem:DEhankel} Suppose that $\varphi \in A^2(\mathbb{D})$. Then 
$$s_n(H_{\varphi}) = s_n(H_{E\varphi}), \qquad n \geq 0.$$ In particular, for 
$0 < p < \infty$ we have 
	\begin{align*}
		\|H_{\varphi}\|_{A^2(\mathbb{D}) \times A^2(\mathbb{D})} &= 
\|H_{E\varphi}\|_{H^2(\mathbb{D}^2)\times H^2(\mathbb{D}^2)}, \\
		\|H_{\varphi}\|_{S_p(A^2(\mathbb{D}) \times A^2(\mathbb{D}))} 
&= \|H_{E\varphi}\|_{S_p(H^2(\mathbb{D}^2)\times H^2(\mathbb{D}^2))}. 
	\end{align*}
\end{lem}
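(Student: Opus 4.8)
The plan is to work directly from the approximation-number definition $s_n(H) = \inf\{\|H-K\| : \rank K \leq n\}$, which sidesteps all bookkeeping with matrices and complex conjugation. Everything rests on a single identity relating the two forms. For polynomials $f,g$ in $H^2(\mathbb{D}^2)$ I would first establish
\[
H_{E\varphi}(f,g) = \langle fg, E\varphi\rangle_{H^2(\mathbb{D}^2)} = \langle D(fg),\varphi\rangle_{A^2(\mathbb{D})} = \langle (Df)(Dg), \varphi\rangle_{A^2(\mathbb{D})} = H_\varphi(Df, Dg),
\]
using that $E = D^*$ in the second step and the multiplicativity $D(fg) = (Df)(Dg)$ in the third. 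Specializing to $f = Eh$, $g = Ek$ with $h,k \in A^2(\mathbb{D})$ and invoking $DE = I$ then yields the companion identity $H_\varphi(h,k) = H_{E\varphi}(Eh, Ek)$.

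With these two identities in hand I would prove the two inequalities that together give $s_n(H_\varphi) = s_n(H_{E\varphi})$. For $s_n(H_{E\varphi}) \leq s_n(H_\varphi)$: given any form $K$ on $A^2(\mathbb{D}) \times A^2(\mathbb{D})$ with $\rank K \leq n$, write $K(u,v) = \sum_{i=1}^n \phi_i(u)\psi_i(v)$ and set $K'(f,g) = K(Df, Dg) = \sum_i (\phi_i\circ D)(f)\,(\psi_i \circ D)(g)$, which manifestly has $\rank K' \leq n$. Since $D$ is a contraction, the first identity gives
\[
|(H_{E\varphi} - K')(f,g)| = |(H_\varphi - K)(Df, Dg)| \leq \|H_\varphi - K\|\,\|Df\|\,\|Dg\| \leq \|H_\varphi - K\|\,\|f\|\,\|g\|,
\]
so $\|H_{E\varphi} - K'\| \leq \|H_\varphi - K\|$; taking the infimum over $K$ proves the inequality. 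The reverse inequality is entirely symmetric: approximating $H_{E\varphi}$ by a rank-$\leq n$ form $L$, I set $L'(h,k) = L(Eh, Ek)$ and use the second identity together with the fact that $E$ is an isometry to obtain $\|H_\varphi - L'\| \leq \|H_{E\varphi} - L\|$.

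Once $s_n(H_\varphi) = s_n(H_{E\varphi})$ holds for every $n \geq 0$, the asserted equalities of operator norm (the case $n=0$) and of every $S_p$-norm follow at once from the definitions. I expect the only genuinely delicate point to be the master identity $H_{E\varphi}(f,g) = H_\varphi(Df,Dg)$: it is precisely here that the two structural facts about the diagonal map must be combined, namely that $E$ is the adjoint of $D$ and that $D$ --- unlike $E$ --- is multiplicative. The relations $DE = I$, $\|D\| \leq 1$, and $\|Eh\| = \|h\|$ then do the rest, and no density argument beyond the routine passage from polynomials to general elements is needed, since boundedness of one form is equivalent to boundedness of the other.
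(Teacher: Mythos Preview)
Your proposal is correct and follows essentially the same approach as the paper: both proofs hinge on the identity $H_{E\varphi}(f,g)=H_\varphi(Df,Dg)$, obtained from $E=D^\ast$ and the multiplicativity of $D$, and then transfer rank-$n$ approximants in each direction using that $D$ is contractive and $E$ is isometric. The paper phrases one direction via the restriction of $H_{E\varphi}$ to the subspace $X=EA^2(\mathbb{D})$, but this is the same as your move $L'(h,k)=L(Eh,Ek)$ under the unitary identification $E:A^2(\mathbb{D})\to X$.
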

\begin{proof}
	Let $J : X \times X \to \mathbb{C}$ be the restriction of 
$H_{E\varphi}$ to $X = EA^2(\mathbb{D})$, 
	$$J(f, g) = \langle fg, E\varphi \rangle_{H^2(\mathbb{D}^2)}, \qquad 
f,g \in X.$$
	 For $f, g \in H^2(\mathbb{D}^2)$ we have the identity 
	\begin{equation}
		\label{eq:Edual} \langle fg, E\varphi 
\rangle_{H^2(\mathbb{D}^2)} = \langle D(fg), \varphi \rangle_{A^2(\mathbb{D})} 
= \langle Df Dg, \varphi \rangle_{A^2(\mathbb{D})}. 
	\end{equation}
	Since $D : X \to A^2(\mathbb{D})$ is unitary, this
	implies that $J$ is unitarily equivalent to $H_\varphi : 
A^2(\mathbb{D}) \times A^2(\mathbb{D}) \to \mathbb{C}$. If $K : 
H^2(\mathbb{D}^2) \times H^2(\mathbb{D}^2) \to \mathbb{C}$ is a rank-$n$ form, 
then its restriction to $X$, $K' : X \times X \to \mathbb{C}$, has smaller rank, 
$\rank K' \leq n$. Since $$\|H_{E\varphi} - K\|_{H^2(\mathbb{D}^2)\times 
H^2(\mathbb{D}^2)} \geq \|J - K'\|_{X \times X}$$ it follows that 
	\[s_n(H_{E\varphi}) \geq s_n(J) = 
s_n(H_\varphi), \qquad n \geq 0.\]
	
	Conversely, if the form $K : A^2(\mathbb{D}) \times A^2(\mathbb{D}) \to 
\mathbb{C}$ has rank $n$, then clearly $K' : H^2(\mathbb{D}^2) \times 
H^2(\mathbb{D}^2) \to \mathbb{C}$ has smaller rank, where $K'(f, g) = K(Df, 
Dg)$, for $f,g \in H^2(\mathbb{D}^2)$. However, it follows from 
\eqref{eq:Edual} and Lemma~\ref{lem:Dnorm} that $$\|H_{\varphi} - K\| = 
\|H_{E\varphi} - K'\|,$$ proving that also $s_n(H_\varphi) \geq 
s_n(H_{E\varphi})$. 
\end{proof}

Consider $A^2(\mathbb{D}^\infty)$ as a function space over the variables $z = 
(z_1,z_2,\ldots)$ and $H^2(\mathbb{D}^\infty)$ as a function space over $\xi = 
(\xi_1,\xi_2,\ldots)$. Define the extension map $\mathscr{E}$ from 
$A^2(\mathbb{D}^\infty)$ to $H^2(\mathbb{D}^\infty)$ by its integral kernel,
\[K_\xi(z) = \prod_{j=1}^\infty 
\frac{1}{1-\xi_{2j-1}\overline{z_j}}\,\frac{1}{1-\xi_{2j}\overline{z_j}}, 
\qquad z, \xi \in \mathbb{D}^\infty \cap \ell^2,\]
so that
\[\mathscr{E}f(\xi) = \int_{\mathbb{D}^\infty} f(z)K_\xi(z)\,d\mathbf{m}(z).\]
By tensorization of Lemma~\ref{lem:DEhankel} (the required technical details 
may be found in \cite[Lem.~2]{BP15}), we obtain the following. 
\begin{thm}
	\label{thm:extension} The map $\mathscr{E}$ has the following 
properties. 
	\begin{enumerate}
		\item[(a)] $\mathscr{E}$ defines an isometric isomorphism from 
the Bergman space $A^2(\mathbb{D}^\infty)$ to a subspace of the Hardy space 
$H^2(\mathbb{D}^\infty)$. 
		\item[(b)] For $\varphi \in A^2(\mathbb{D}^\infty)$, let 
$H_\varphi : A^2(\mathbb{D}^\infty)\times A^2(\mathbb{D}^\infty) \to 
\mathbb{C}$ be the Hankel form generated by $\varphi$, and let 
$H_{\mathscr{E}\varphi} : H^2(\mathbb{D}^\infty) \times H^2(\mathbb{D}^\infty) 
\to \mathbb{C}$ be the Hankel form generated by $\mathscr{E}\varphi$. Then, for 
every $n \geq 0$, we have that $$s_n(H_\varphi) = s_n(H_{\mathscr{E}\varphi}).$$ 
In particular, $H_\varphi$ is bounded ($p$-Schatten, $0 < p < \infty$) if and 
only if $H_{\mathscr{E}\varphi}$ is bounded ($p$-Schatten), with equality of 
the norms. 
	\end{enumerate}
\end{thm}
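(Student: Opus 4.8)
The plan is to lift the one-variable Lemma~\ref{lem:DEhankel} to the infinite polydisc by tensorizing over the variables, pairing each Bergman variable $z_j$ with the two Hardy variables $\xi_{2j-1},\xi_{2j}$. First I would introduce the infinite diagonal operator $\mathscr{D}\colon H^2(\mathbb{D}^\infty)\to A^2(\mathbb{D}^\infty)$ given by $\mathscr{D}f(z)=f(z_1,z_1,z_2,z_2,\ldots)$, which is the infinite tensor product of the one-variable operator $D$ from Lemma~\ref{lem:Dnorm}. Tensorizing the facts established around Lemmas~\ref{lem:Dnorm} and~\ref{lem:DEhankel}, one checks on analytic polynomials (and extends by density) that $\mathscr{D}$ is a contraction, that $\mathscr{E}=\mathscr{D}^\ast$, that $\mathscr{D}\mathscr{E}=\mathrm{Id}$ on $A^2(\mathbb{D}^\infty)$, and that $\mathscr{D}$ is multiplicative, $\mathscr{D}(fg)=\mathscr{D}f\,\mathscr{D}g$.

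Part~(a) is then immediate: since each one-variable factor $E=D^\ast$ is an isometry onto its range $X\subset H^2(\mathbb{D}^2)$, the tensor product $\mathscr{E}$ is an isometry from $A^2(\mathbb{D}^\infty)$ onto the subspace $\mathscr{X}=\mathscr{E}A^2(\mathbb{D}^\infty)$ of $H^2(\mathbb{D}^\infty)$, and $\mathscr{D}$ restricts to the unitary inverse $\mathscr{D}|_{\mathscr{X}}\colon\mathscr{X}\to A^2(\mathbb{D}^\infty)$. For part~(b) the essential tool is the infinite-variable analogue of the identity~\eqref{eq:Edual}: for any $f,g\in H^2(\mathbb{D}^\infty)$,
\[
\langle fg,\mathscr{E}\varphi\rangle_{H^2(\mathbb{D}^\infty)}=\langle\mathscr{D}(fg),\varphi\rangle_{A^2(\mathbb{D}^\infty)}=\langle\mathscr{D}f\,\mathscr{D}g,\varphi\rangle_{A^2(\mathbb{D}^\infty)},
\]
where the first equality is $\mathscr{E}=\mathscr{D}^\ast$ and the second is multiplicativity of $\mathscr{D}$. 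In particular $H_{\mathscr{E}\varphi}(f,g)=H_\varphi(\mathscr{D}f,\mathscr{D}g)$.

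With this in hand I would repeat the rank/singular-value argument of Lemma~\ref{lem:DEhankel} verbatim. For one inequality, restrict a competitor rank-$n$ form $K$ on $H^2(\mathbb{D}^\infty)$ to $\mathscr{X}$; its restriction $K'$ has rank at most $n$, while $H_{\mathscr{E}\varphi}|_{\mathscr{X}\times\mathscr{X}}$ is unitarily equivalent to $H_\varphi$ through the unitary $\mathscr{D}|_{\mathscr{X}}$, yielding $s_n(H_{\mathscr{E}\varphi})\geq s_n(H_\varphi)$. For the reverse inequality, given a rank-$n$ form $K$ on $A^2(\mathbb{D}^\infty)$, set $K'(f,g)=K(\mathscr{D}f,\mathscr{D}g)$, which again has rank at most $n$; since $\mathscr{D}\mathscr{E}=\mathrm{Id}$ with $\mathscr{E}$ isometric, $\mathscr{D}$ maps the unit ball of $H^2(\mathbb{D}^\infty)$ onto that of $A^2(\mathbb{D}^\infty)$, so the identity above gives $\|H_\varphi-K\|=\|H_{\mathscr{E}\varphi}-K'\|$ and hence $s_n(H_\varphi)\geq s_n(H_{\mathscr{E}\varphi})$. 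Combining the two gives $s_n(H_\varphi)=s_n(H_{\mathscr{E}\varphi})$ for all $n$, from which the stated equalities of operator, $p$-Schatten, and Hilbert--Schmidt norms follow.

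The main obstacle is not the algebra above---which is a faithful copy of the one-variable proof---but the rigorous justification of the tensorization: that the infinite product kernel $K_\xi(z)$ defines a bounded operator with $\mathscr{E}=\mathscr{D}^\ast$, that the one-variable identities survive the passage to infinitely many variables, and that density of analytic polynomials lets one reduce every statement to a finite tensor product. These are precisely the technical points worked out in \cite[Lem.~2]{BP15}, which I would invoke rather than reproduce.
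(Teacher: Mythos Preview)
Your proposal is correct and follows exactly the approach the paper takes: the paper's proof is the single sentence ``by tensorization of Lemma~\ref{lem:DEhankel} (the required technical details may be found in \cite[Lem.~2]{BP15})'', and what you have written is a faithful and accurate expansion of precisely that tensorization argument, including the same invocation of \cite[Lem.~2]{BP15} for the infinite-product technicalities.
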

\begin{remark}
	In \cite{OCS12}, the symbol $\psi(z) = (z_1 + z_2)/2$ is used to show 
that the weak factorization
	$H^1(\mathbb{D}^\infty) = H^2(\mathbb{D}^\infty) \odot 
H^2(\mathbb{D}^\infty)$
	cannot hold. In Theorem~\ref{thm:weakfac} the symbol $\varphi(w) = w$ 
is used to demonstrate the corresponding fact for the Bergman spaces. In fact 
the two examples considered are the same, because $E\varphi = \psi$. 
\end{remark}
\section{Carleson measures on the infinite polydisc}\label{sec:carleson} We end 
this paper by producing two infinite dimensional counter-examples to well-known 
finite dimensional results for Carleson measures for the Hardy spaces 
$H^p(\mathbb{D}^d)$. Let $\mu$ be a finite positive measure on $\mathbb{D}^d$ 
(where possibly $d=\infty$), i.e. a finite positive Borel measure on 
$\overline{\mathbb{D}}^d$ such that $\mu(\overline{\mathbb{D}}^d \setminus 
\mathbb{D}^d) = 0$. As usual, measures on the compact space 
$\overline{\mathbb{D}}^d$ correspond to linear functionals on the space of 
continuous functions $C(\overline{\mathbb{D}}^d)$. We say that $\mu$ is a 
$H^p$-Carleson measure if there exists a constant $C=C(\mu_d,p)<\infty$ such 
that
\[\int_{\mathbb{D}^d}|f(z)|^p \, d\mu_d(z) \leq C\|f\|_{H^p(\mathbb{D}^d)}^p\]
for every analytic polynomial $f$. We say that $\mu$ is a $L^p$-Carleson 
measure if there exists a constant $C=C(\mu_d,p)<\infty$ such that
\[\int_{\mathbb{D}^d}|\mathscr{P}f(z)|^p \, d\mu_d(z) \leq 
C\|f\|_{L^p(\mathbb{T}^d)}^p\]
for every trigonometric polynomial $f$. Here $\mathscr{P}f$ is the Poisson 
extension of $f$, defined for $f \in L^p(\mathbb{T}^d)$ by
\[\mathscr{P}f(w) = \int_{\mathbb{T}^d}f(z)\mathscr{P}_w(z)\,d\mathbf{m}_1(z), 
\qquad \mathscr{P}_w(z) = \prod_{j=1}^d 
\frac{1-|w_j|^2}{|1-\overline{z_j}w_j|^2}.\]
This is always well-defined as long as we restrict ourselves to  
$L^2(\mathbb{T}^d)$-functions $f$ only dependent on a finite number of 
variables, since we may then suppose that $w$ is finitely supported.

The study of Carleson measures on the infinite polydisc is an important part of 
the theory of $H^p$ spaces. For instance, the local embedding problem discussed 
in \cite[Sec.~3]{SS09} can be formulated in terms of Carleson measures. Let 
$\mathscr{B}^{-1}$ denote the inverse Bohr lift, so that
\[\big(\mathscr{B}^{-1}f\big)(s) = 
f\big(2^{-s},\,3^{-s},\,5^{-s},\,\ldots\,p_j^{-s},\,\ldots\big).\]
For $0<p<\infty$, is it true that the measure $\mu_\infty$ defined on 
$\mathbb{D}^\infty$ by 
\[\int f(z)d\mu_\infty(z)=\int_0^1 \big(\mathscr{B}^{-1}f\big)(1/2+it)\,dt, 
\qquad f \in C(\overline{\mathbb{D}}^d),\]
is a $H^p$-Carleson measure? A positive answer is only known for even 
integers. Additionally, the boundedness of positive definite Hankel forms 
\eqref{eq:mhank} can be formulated in terms of Carleson measures on 
$\mathbb{D}^\infty$ \cite{PP16}, and the same is true for the Volterra operators 
studied in \cite{BPS16}.

From \cite{Ch79}, it is known that a measure $\mu$ on $\DD^d$, for $d<\infty$, 
is a $H^p$-Carleson measure for one $0<p<\infty$ if and only if it is a 
Carleson measure for every $0<p<\infty$. We will now construct a 
counter-example to this statement when $d=\infty$. We recall that the diagonal 
restriction operator $Df(w) = f(w,w)$ induces a bounded map from 
$H^p(\mathbb{D}^2)$ to $A^p(\mathbb{D})$ for every $0<p<\infty$ 
(see~\cite{DS75}), and offer the following clarification in the case $0<p<2$.
\begin{lem}
	\label{lem:Dpnorm} The diagonal operator $D$ is not contractive from 
$H^p(\mathbb{D}^2)$ to $A^p(\mathbb{D})$ when $0<p<2$. 
\end{lem}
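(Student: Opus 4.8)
The plan is to refute contractivity by exhibiting a single function on which $D$ strictly increases the norm. The natural candidate is the degree-one symmetric polynomial $f(z_1,z_2)=z_1+z_2$, whose diagonal $Df(w)=2w$ is as simple as possible. This $f$ is an extremal function for the $p=2$ contraction of Lemma~\ref{lem:Dnorm} (there both norms equal $\sqrt2$), so one expects it to be exactly where contractivity first fails as $p$ drops below $2$. First I would reduce the whole question to a one-variable inequality by computing the two norms explicitly. In polar coordinates the Bergman side is
\[\|Df\|_{A^p(\mathbb{D})}^p = 2^p\int_{\mathbb{D}}|w|^p\,dm(w) = 2^p\int_0^1 r^p\cdot 2r\,dr,\]
while for the Hardy side I would use rotation invariance of $\mathbf{m}_1$ to reduce the torus integral to $\int_{\mathbb{T}}|1+z|^p\,dm_1(z)$ and then insert $|1+e^{i\theta}|=2\,|\cos(\theta/2)|$, obtaining
\[\|f\|_{H^p(\mathbb{D}^2)}^p = \int_{\mathbb{T}^2}|z_1+z_2|^p\,d\mathbf{m}_1 = 2^p\cdot\frac{2}{\pi}\int_0^{\pi/2}\cos^p\theta\,d\theta.\]
After dividing by $2^p$, contractivity at exponent $p$ would mean $\int_0^1 r^p\,2r\,dr\le \frac{2}{\pi}\int_0^{\pi/2}\cos^p\theta\,d\theta$, so the lemma reduces to proving that this inequality is in fact strictly reversed for every $0<p<2$.

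The cleanest route I see to the reversed inequality is to recognize both sides as moments of probability measures and compare them. The substitution $x=r^2$ turns the left-hand measure $2r\,dr$ into the uniform measure $dx$ on $[0,1]$, while $r=\cos\theta$ followed by $x=r^2$ turns the right-hand side into the arcsine measure $d\nu(x)=\frac{dx}{\pi\sqrt{x(1-x)}}$; both sides become $\int_0^1 x^{p/2}\,d(\cdot)$. Thus it suffices to show, with $s=p/2\in(0,1)$, that
\[\int_0^1 x^{s}\,dx > \int_0^1 x^{s}\,d\nu(x).\]
Both measures have mean $\tfrac12$, and their distribution functions cross exactly once: one checks that $F_{\mathrm{unif}}(x)=x$ and $F_\nu(x)=\frac{2}{\pi}\arcsin\sqrt{x}$ satisfy $F_{\mathrm{unif}}\le F_\nu$ on $[0,\tfrac12]$ and $F_{\mathrm{unif}}\ge F_\nu$ on $[\tfrac12,1]$, with equality only at $x=\tfrac12$.

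To finish I would integrate by parts. Setting $G=F_\nu-F_{\mathrm{unif}}$, the difference of the two moments equals $s\int_0^1 x^{s-1}G(x)\,dx$. Equality of means gives $\int_0^1 G\,dx=0$; since $G$ is positive on $(0,\tfrac12)$, negative on $(\tfrac12,1)$, and the weight $x^{s-1}$ is strictly decreasing for $s<1$, comparing $x^{s-1}$ to its value $(\tfrac12)^{s-1}$ on each half shows $\int_0^1 x^{s-1}G\,dx>0$. (The singular weight is harmless since $G(x)\sim \frac{2}{\pi}\sqrt{x}$ near $0$.) This is precisely the statement that the strictly concave $x^s$ integrates to a smaller value against the more spread-out arcsine measure, giving the strict inequality and hence $\|Df\|_{A^p(\mathbb{D})}>\|f\|_{H^p(\mathbb{D}^2)}$.

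The main obstacle is exactly this final scalar comparison. Because the two quantities agree at both endpoints $p=0$ and $p=2$ (the coincidence at $p=2$ being Lemma~\ref{lem:Dnorm} specialized to $f$), no crude estimate can separate them, and one genuinely needs the single-crossing structure of the uniform versus arcsine distribution functions to determine the sign on the open interval. Everything else, namely the two norm computations and the changes of variable, is routine.
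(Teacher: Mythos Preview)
Your proposal is correct and begins exactly as the paper does: the same test function $f(z_1,z_2)=z_1+z_2$ (the paper normalizes by $1/2$, which is immaterial) and the same computation of both norms, reducing everything to the scalar inequality $\frac{2}{p+2} > \frac{2}{\pi}\int_0^{\pi/2}\cos^p\theta\,d\theta$ for $0<p<2$. The divergence is only in how this last inequality is established. The paper rewrites the right-hand side as $2/\bigl(p\,\mathrm{B}(p/2,1/2)\bigr)$ via Gamma identities and then uses the positive-term series $\mathrm{B}(x,1/2)=\sum_{n\ge0}\binom{n-1/2}{n}\frac{1}{x+n}$ to get $\mathrm{B}(p/2,1/2)>\frac{2}{p}+1$ directly. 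Your route is probabilistic: after the substitution $x=r^2$ both sides become $p/2$-moments of the uniform and arcsine laws on $[0,1]$, which share the mean $1/2$ and whose distribution functions cross exactly once; integration by parts against the strictly decreasing weight $x^{p/2-1}$ then gives the strict sign. Your argument is self-contained and conceptually transparent (it is exactly the single-crossing criterion for second-order stochastic dominance applied to the concave function $x\mapsto x^{p/2}$), while the paper's is quicker if one has the Beta series at hand. Both correctly identify that the coincidence at $p=2$ rules out any first-order estimate.
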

\begin{proof}
	Let $0<p<2$ and consider $f(z_1,z_2) = (z_1+z_2)/2$. Clearly
	\[\|Df\|_{A^p(\mathbb{D})}^p = \int_{\mathbb{D}} |f(w,w)|^p \, dm(w) = 
\frac{2}{2+p},\]
	so it is enough to verify that $\|f\|_{H^p(\mathbb{D}^2)}^p < 2/(2+p)$. 
We factor out $z_2$ and compute using various identities for the Beta and Gamma 
functions, obtaining that
	\begin{align*}
		\|f\|_{H^p(\mathbb{D}^2)}^p &= \frac{1}{2\pi} \int_0^{2\pi} 
\left|\frac{1+e^{i\theta}}{2}\right|^p \, d\theta = \frac{1}{2\pi} 
\int_0^{2\pi} \left|\cos{\frac{\theta}{2}}\right|^p \,d\theta \\
		&= \frac{1}{\pi} \int_0^{\pi} 
\left(\cos{\frac{\theta}{2}}\right)^p \, d\theta = \frac{2}{\pi} \int_0^1 
\frac{t^p}{\sqrt{1-t^2}}\,dt \\
		&= \frac{1}{\pi} \int_0^1 t^{(p-1)/2}(1-t)^{-1/2}\,dt = 
\frac{\operatorname{B}((p+1)/2),1/2)}{\pi} \\
		&= \frac{\Gamma(p/2+1/2)\Gamma(1/2)}{\pi\Gamma(p/2+1)} = 
\frac{\Gamma(p/2+1/2)}{\Gamma(1/2)(p/2)\Gamma(p/2)} = 
\frac{2}{p\operatorname{B}(p/2,1/2)}. 
	\end{align*}
	To conclude we make use of the identity
	\[\operatorname{B}(x,y) = \sum_{n=0}^\infty \binom{n-y}{n} 
\frac{1}{x+n}, \qquad x,y>0.\]
	The binomial coefficient is positive for every $n$ when $y=1/2$, so if 
$0<p<2$ we have that
	\[\operatorname{B}(p/2,1/2) > \frac{1}{p/2} + \operatorname{B}(1,1/2) - 
\frac{1}{1} = \frac{2}{p} + 1. \qedhere\]
\end{proof}
\begin{remark}
	Lemma~\ref{lem:Dnorm} implies that $D$ is a contraction from 
$H^p(\mathbb{D}^2)$ to $A^p(\mathbb{D})$ if $p$ is an even integer. It would be 
interesting to know if $D$ is a contraction for every $p\geq2$.
\end{remark} 
Tensorization of Lemma~\ref{lem:Dnorm} and Lemma~\ref{lem:Dpnorm} yields the 
following result. 
\begin{thm}
	\label{thm:carleson} Let $\mu_\infty$ be the measure defined for $f$ in 
$C(\overline{\mathbb{D}}^\infty)$ by
	\begin{equation} \label{eq:diagcarleson}
		\int_{\mathbb{D}^\infty} f(z_1,z_2,z_3,z_4,\ldots)\, d\mu_\infty(z) = 
		\int_{\mathbb{D}^\infty} f(z_1,z_1,z_3,z_3,\ldots) \,d\mathbf{m}(z),
	\end{equation}
	where $\mathbf{m}$ denotes the infinite product of the unweighted 
normalized Lebesgue measure on $\mathbb{D}$. The measure $\mu_\infty$ is a 
$H^p$-Carleson measure on $\mathbb{D}^\infty$ if $p$ is an even integer, but 
not when $0<p<2$. 
\end{thm}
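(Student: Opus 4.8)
The plan is to rewrite the Carleson condition for $\mu_\infty$ as a boundedness statement for the \emph{pairwise diagonal} map and then to tensorize the single-pair results of Lemma~\ref{lem:Dnorm} and Lemma~\ref{lem:Dpnorm}. Let $\mathscr{D}$ denote the operator that applies the diagonal operator $D$ to each consecutive pair of variables, so that $\mathscr{D}f(z_1,z_3,z_5,\ldots) = f(z_1,z_1,z_3,z_3,\ldots)$. Applying the defining identity \eqref{eq:diagcarleson} to the continuous function $|f|^p$ (which is legitimate since an analytic polynomial $f$ depends on finitely many variables) gives
\[\int_{\mathbb{D}^\infty} |f|^p \, d\mu_\infty = \int_{\mathbb{D}^\infty} |f(z_1,z_1,z_3,z_3,\ldots)|^p \, d\mathbf{m} = \|\mathscr{D}f\|_{A^p(\mathbb{D}^\infty)}^p.\]
Thus $\mu_\infty$ is an $H^p$-Carleson measure exactly when $\mathscr{D}\colon H^p(\mathbb{D}^\infty)\to A^p(\mathbb{D}^\infty)$ is bounded, and the Carleson constant may be taken to be $1$ precisely when this map is contractive.

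For the positive direction I would first recall, as in the remark following Lemma~\ref{lem:Dpnorm}, that $D\colon H^p(\mathbb{D}^2)\to A^p(\mathbb{D})$ is contractive whenever $p=2k$ is an even integer: using the multiplicativity $D(fg)=Df\cdot Dg$ and Lemma~\ref{lem:Dnorm} one has
\[\|Df\|_{A^{2k}(\mathbb{D})}^{2k} = \|D(f^k)\|_{A^2(\mathbb{D})}^2 \leq \|f^k\|_{H^2(\mathbb{D}^2)}^2 = \|f\|_{H^{2k}(\mathbb{D}^2)}^{2k}.\]
Treating each pair $(z_{2j-1},z_{2j})$ as a single block and applying this one-block contraction one block at a time, interchanging the order of integration with Minkowski's inequality exactly as in the tensorization scheme of Section~\ref{sec:iter} (which is valid since $p\geq 2\geq 1$), yields that $\mathscr{D}$ is contractive from $H^p(\mathbb{D}^\infty)$ to $A^p(\mathbb{D}^\infty)$. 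Hence $\mu_\infty$ is an $H^p$-Carleson measure with constant $C=1$ for every even integer $p$.

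For the negative direction I would tensorize the extremal example behind Lemma~\ref{lem:Dpnorm}. Put $f(z_1,z_2)=(z_1+z_2)/2$, so that $Df(w)=w$ and, by Lemma~\ref{lem:Dpnorm},
\[c^p := \frac{\|Df\|_{A^p(\mathbb{D})}^p}{\|f\|_{H^p(\mathbb{D}^2)}^p} = \frac{2/(2+p)}{\|f\|_{H^p(\mathbb{D}^2)}^p} > 1, \qquad 0<p<2.\]
Consider the tensor power $F_N(z)=\prod_{j=1}^N f(z_{2j-1},z_{2j})$, an analytic polynomial in $2N$ variables. Since $\mathbf{m}_1$ and $\mathbf{m}$ are product measures and the factors depend on disjoint variables, both relevant norms factorize,
\[\|\mathscr{D}F_N\|_{A^p(\mathbb{D}^\infty)}^p = \left(\frac{2}{2+p}\right)^N \qquad\text{and}\qquad \|F_N\|_{H^p(\mathbb{D}^\infty)}^p = \left(\|f\|_{H^p(\mathbb{D}^2)}^p\right)^N,\]
so the Carleson ratio equals $(c^p)^N$. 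As $c^p>1$, this tends to infinity with $N$, so no finite constant $C$ can work and $\mu_\infty$ fails to be an $H^p$-Carleson measure for $0<p<2$.

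The routine ingredients are the two factorizations and the Minkowski interchange, both of which follow the standard tensorization template of Section~\ref{sec:iter}. The negative direction is then a clean geometric blow-up, so I expect the only genuine (if mild) obstacle to be the positive direction: one must check that the block-wise contraction genuinely survives the iteration, which is guaranteed because $D$ is contractive on each pair and the condition $p\geq 1$ legitimizes each Minkowski step.
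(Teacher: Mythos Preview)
Your proposal is correct and follows exactly the approach the paper indicates: the paper's proof consists only of the sentence ``Tensorization of Lemma~\ref{lem:Dnorm} and Lemma~\ref{lem:Dpnorm} yields the following result,'' and you have simply written out that tensorization in detail. One small remark: in the positive direction the exponents on both sides are the same ($p$), so the iteration is just Fubini and does not actually require Minkowski's inequality; your invocation of it is harmless but superfluous here.
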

Theorem~\ref{thm:carleson} invites the following question.
\begin{question}
	If $\mu$ defines a $H^p$-Carleson measure on $\DD^\infty$ for some 
$0<p<\infty$, does it also define a $H^q$-Carleson measure for every 
$p<q<\infty$? 
\end{question}

In \cite{Ch79}, it is also proven that $L^p$-Carleson and $H^p$-Carleson 
measures coincide on $\DD^d$, when $d<\infty$. Again, this is no longer true on 
$\DD^\infty$, as our next two examples will demonstrate. 

To obtain the first counter-example, we verify that the 
measure \eqref{eq:diagcarleson} of Theorem~\ref{thm:carleson} does not define a 
$L^2$-Carleson measure on $\mathbb{D}^\infty$ by replacing Lemma~\ref{lem:Dpnorm} 
with the following result.

\begin{lem} \label{lem:DL2norm}
	The operator $D \circ \mathscr{P}$ is not a contraction from $L^2(\mathbb{T}^2)$ to $L^2(\mathbb{D},m)$.
\end{lem}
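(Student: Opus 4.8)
The plan is to produce a single trigonometric polynomial $f$ on $\mathbb{T}^2$ for which $\|D\mathscr{P}f\|_{L^2(\mathbb{D},m)} > \|f\|_{L^2(\mathbb{T}^2)}$. First I would record how $D\circ\mathscr{P}$ acts on Fourier monomials. Since the Poisson kernel $\mathscr{P}_w$ on $\mathbb{T}^2$ is a product of one-dimensional kernels, the extension of $e^{ij\theta_1}e^{ik\theta_2}$ factors as $P_j(w_1)P_k(w_2)$, where $P_m(w) = w^m$ for $m \geq 0$ and $P_m(w) = \bar{w}^{-m}$ for $m < 0$. Restricting to the diagonal $w_1 = w_2 = w$ then sends each monomial $e^{ij\theta_1}e^{ik\theta_2}$ to the monomial $P_j(w)P_k(w)$ in $w$ and $\bar{w}$, whose \emph{net degree} (the power of $w$ minus the power of $\bar{w}$) equals $j+k$ in every case.

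The conceptual point---and the reason the analogue of Lemma~\ref{lem:Dnorm} fails at the level of $L^2$---is that distinct monomials $w^a\bar{w}^b$ of the same net degree are \emph{not} orthogonal in $L^2(\mathbb{D},m)$. Indeed, a direct computation in polar coordinates gives $\int_{\mathbb{D}}|w|^{2n}\,dm(w) = (n+1)^{-1}$, so that the net-degree-zero functions $|w|^{2k}$, $k \geq 0$, have Gram matrix equal to the Hilbert matrix $\big(1/(j+k+1)\big)_{j,k\geq 0}$. These off-diagonal inner products are all strictly positive, and it is precisely this positive interference---absent in the Hardy setting, where diagonal restriction averages \emph{orthogonal} monomials and Cauchy--Schwarz yields contractivity---that can be exploited to boost the norm. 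On $\mathbb{T}^2$, by contrast, the monomials $z_1^k\bar{z}_2^k$ remain orthonormal.

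To make this concrete I would take $f(z_1,z_2) = 1 + z_1\bar{z}_2$, for which $\|f\|_{L^2(\mathbb{T}^2)}^2 = 2$. Its Poisson extension is the pluriharmonic function $1 + w_1\bar{w}_2$, and on the diagonal we obtain $D\mathscr{P}f(w) = 1 + |w|^2$. Using $\int_{\mathbb{D}}|w|^{2n}\,dm = (n+1)^{-1}$ we compute
\[
\|D\mathscr{P}f\|_{L^2(\mathbb{D},m)}^2 = \int_{\mathbb{D}}\big(1 + |w|^2\big)^2\,dm(w) = 1 + 2\cdot\tfrac12 + \tfrac13 = \tfrac73 > 2 = \|f\|_{L^2(\mathbb{T}^2)}^2,
\]
which shows that $D\circ\mathscr{P}$ is not contractive.

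Since the example is so explicit, there is no serious obstacle in the argument; the only genuine content is the observation in the second paragraph that the relevant Gram matrix is the (positive, non-diagonal) Hilbert matrix, together with keeping track of the normalization $m(\mathbb{D}) = 1$ in the elementary integrals. One could exhibit the failure more dramatically by testing against $f = \sum_{k} c_k\, z_1^k\bar{z}_2^k$ and invoking that the Hilbert matrix has operator norm $\pi$; this shows that the norm of $D\circ\mathscr{P}$ is in fact at least $\sqrt{\pi}$. A single two-term symbol, however, already suffices for the stated non-contractivity.
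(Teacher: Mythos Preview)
Your proof is correct. Both you and the paper proceed by exhibiting a single explicit trigonometric polynomial witnessing the failure of contractivity, so the overall strategy is the same; the difference lies in the choice of test function. The paper uses the three-term polynomial $f = (e^{i\theta_1} + e^{i\theta_2} + e^{2i\theta_1}e^{-i\theta_2})/\sqrt{3}$, whose diagonal Poisson extension is $e^{i\theta}(2r+r^3)/\sqrt{3}$, yielding a ratio of $43/36$. Your two-term example $f = 1 + z_1\bar z_2$ is simpler and gives the slightly better ratio $7/6$. Your additional observation that the Gram matrix of $\{|w|^{2k}\}_{k\geq 0}$ in $L^2(\mathbb{D},m)$ is the Hilbert matrix, and hence that $\|D\circ\mathscr{P}\| \geq \sqrt{\pi}$, is not in the paper and is a nice conceptual bonus explaining \emph{why} the analytic contractivity of Lemma~\ref{lem:Dnorm} must fail at the $L^2$ level.
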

\begin{proof}
	Consider
	\[f(e^{i\theta_1},e^{i\theta_2}) = \frac{1}{\sqrt{3}}\left(e^{i\theta_1}+e^{i\theta_2} + e^{i2\theta_1}e^{-i\theta_2}\right),\]
	for which clearly $\|f\|_{L^2(\mathbb{T}^2)}=1$. Furthermore, we find that 
	\[\mathscr{P}f(re^{i\theta},re^{i\theta}) = \frac{e^{i\theta}}{\sqrt{3}}(2r+r^3),\]
	so it follows that
	\[\int_{\mathbb{D}} |\mathscr{P}f(z,z)|^2\,dm(z) = \frac{2}{3} \int_0^1 \left(2r+r^3\right)^2\,rdr = \frac{43}{36}>1. \qedhere\]
\end{proof}

Our second counter-example is obtained through the connection with Dirichlet series. 
In preparation, let us recall a few properties of $L^2(\mathbb{T}^\infty)$. Let 
$\mathbb{Q}_+$ denote the set of positive rational numbers. Each $q\in\mathbb{Q}_+$ 
has a finite expansion of the form
\[q = \prod_{j=1}^\infty p_j^{\kappa_j},\]
where $\kappa_j \in \mathbb{Z}$. Hence $\mathbb{Q}_+$ can be identified with 
the set of all finite multi-indices. As in \eqref{eq:multexp}, every function $f 
\in L^2(\mathbb{T}^\infty)$ has an expansion
\[f(z) = \sum_{q \in \mathbb{Q}_+} a_q z^{\kappa(q)}, \qquad \qquad 
\|f\|_{L^2(\mathbb{T}^\infty)}^2 = \sum_{q \in \mathbb{Q}_+} |a_q|^2.\]
Note that if $f \in L^2(\mathbb{T}^{d'})$ for some $d' < \infty$ and 
$s=\sigma+it$, then
\[\big(\mathscr{B}^{-1}\mathscr{P}f\big)(s) = \sum_{q \in \mathbb{Q}_+} a_q 
(q_+)^{-\sigma} q^{-it},\qquad\text{where}\qquad q_+ = \prod_{j=1}^\infty 
p_j^{|\kappa_j|}.\]
As our final preliminary, let $\omega(n)$ denote the number of distinct prime 
factors of $n$. It is well-known that if $\mre(s)>1$, then
\[\frac{[\zeta(s)]^2}{\zeta(2s)} = \prod_{j=1}^\infty 
\frac{1+p_j^{-s}}{1-p_j^{-s}} = \sum_{n=1}^\infty 2^{\omega(n)}n^{-s}.\] 
\begin{thm}
	\label{thm:carleson2} Let $\mu_\infty$ be the measure defined for $f$ 
in $C(\overline{\mathbb{D}}^\infty)$ by
	\[\int_{\mathbb D^{\infty}}f(z)\,d\mu_\infty(z)=\int_0^1 
\big(\mathscr{B}^{-1}f\big)(1/2+\sigma)\,d\sigma.\]
	Then $\mu_\infty$ is a $H^2$-Carleson measure but not a 
$L^2$-Carleson measure. 
\end{thm}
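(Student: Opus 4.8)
The plan is to transfer both assertions to the Dirichlet series side through the Bohr lift and then to treat them by entirely different mechanisms. Observe first that $\mu_\infty$ is the push-forward of Lebesgue measure $d\sigma$ on $(0,1)$ under the map $\sigma\mapsto z(\sigma)=(p_j^{-1/2-\sigma})_{j\ge1}\in\mathbb{D}^\infty\cap\ell^2$, so that $\int g\,d\mu_\infty=\int_0^1 g(z(\sigma))\,d\sigma$ for continuous $g$. For an analytic polynomial $f$ with associated Dirichlet series $F(s)=\sum_n a_n n^{-s}$ this yields $\int_{\mathbb{D}^\infty}|f|^2\,d\mu_\infty=\int_0^1|F(1/2+\sigma)|^2\,d\sigma$, while $\|f\|_{H^2(\mathbb{D}^\infty)}^2=\sum_n|a_n|^2$; thus the $H^2$-Carleson property is equivalent to the local embedding $\int_0^1|F(1/2+\sigma)|^2\,d\sigma\le C\sum_n|a_n|^2$. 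For a trigonometric polynomial $f=\sum_{q\in\mathbb{Q}_+}a_q z^{\kappa(q)}$, the Poisson extension collapses conjugate and analytic variables together, $\mathscr{P}f(z(\sigma))=\sum_q a_q(q_+)^{-1/2-\sigma}$, so the $L^2$-Carleson property is equivalent to $\int_0^1|\sum_q a_q(q_+)^{-1/2-\sigma}|^2\,d\sigma\le C\sum_q|a_q|^2$; the key structural fact here is that exactly $2^{\omega(N)}$ rationals $q$ satisfy $q_+=N$.

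For the positive ($H^2$) statement I would split off the constant term, writing $F(1/2+\sigma)=a_1+G(\sigma)$ with $G(\sigma)=\sum_{n\ge2}a_n n^{-1/2-\sigma}$, and use $|F|^2\le 2|a_1|^2+2|G|^2$. As the integration interval has length one, $\int_0^1 2|a_1|^2\,d\sigma=2|a_1|^2$, and it remains to control $\int_0^1|G(\sigma)|^2\,d\sigma$. Enlarging the integral to $(0,\infty)$ and expanding gives $\int_0^\infty|G(\sigma)|^2\,d\sigma=\sum_{m,n\ge2}a_m\overline{a_n}(mn)^{-1/2}/\log(mn)$, the quadratic form of the multiplicative Hilbert matrix; its boundedness on $\ell^2$ (the main result of \cite{BPSSV}) bounds this by $\pi\sum_{n\ge2}|a_n|^2$. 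Hence $\int_0^1|F(1/2+\sigma)|^2\,d\sigma\le 2\pi\sum_n|a_n|^2$, so $\mu_\infty$ is an $H^2$-Carleson measure.

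For the failure of the $L^2$-Carleson property I would exhibit an explicit family forcing the constant to blow up. The natural choice, dictated by the double pole of $\sum_N 2^{\omega(N)}N^{-s}=\zeta(s)^2/\zeta(2s)$ at $s=1$, is $f_x=\sum_{q_+\le x}(q_+)^{-1/2}z^{\kappa(q)}$. Grouping by the value $N=q_+$ (occurring with multiplicity $2^{\omega(N)}$) gives $\|f_x\|_{L^2(\mathbb{T}^\infty)}^2=\sum_{N\le x}2^{\omega(N)}N^{-1}$ and $\mathscr{P}f_x(z(\sigma))=\sum_{N\le x}2^{\omega(N)}N^{-1-\sigma}$. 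On the one hand $2^{\omega(N)}\le d(N)$ and $\sum_{N\le x}d(N)/N\simeq(\log x)^2$, so the $L^2$-norm grows no faster than $(\log x)^2$. On the other hand $\sum_{N\le x}2^{\omega(N)}N^{-1-\sigma}$ approximates the full sum $\zeta(1+\sigma)^2/\zeta(2+2\sigma)\gtrsim\sigma^{-2}$ once $\sigma\gtrsim 1/\log x$, whence $\int_0^1\big(\sum_{N\le x}2^{\omega(N)}N^{-1-\sigma}\big)^2\,d\sigma\gtrsim\int_{1/\log x}^{1/2}\sigma^{-4}\,d\sigma\simeq(\log x)^3$. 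The ratio of the two sides is therefore $\gtrsim\log x\to\infty$, ruling out any finite Carleson constant.

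The genuinely delicate point is the last estimate: one must confirm that the truncated Dirichlet sum really tracks the $\sigma^{-2}$ singularity of $\zeta(1+\sigma)^2/\zeta(2+2\sigma)$ throughout the range $1/\log x\lesssim\sigma\lesssim 1/2$, rather than being washed out by the truncation. This is exactly where the double pole at $s=1$ — which sits at the endpoint $\sigma=0$ of the integration interval — together with the multiplicity weight $2^{\omega(N)}$ conspires to beat the $(\log x)^2$ growth of the $L^2$-norm; the parallel $H^2$-computation, by contrast, produces only a single power of the zeta factor and stays bounded, which is precisely why the two notions diverge here. The remaining ingredients are routine: the push-forward description of $\mu_\infty$, the identity $\mathscr{P}f(z(\sigma))=\sum_q a_q(q_+)^{-1/2-\sigma}$, and standard Mertens/Abel-summation estimates for $\sum_{N\le x}2^{\omega(N)}N^{-1-\sigma}$ and its tail.
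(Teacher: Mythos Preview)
Your argument is correct, but for the $L^2$ counterexample you take a genuinely different route from the paper. For the $H^2$-Carleson statement both proofs ultimately rest on \cite{BPSSV}: the paper simply cites this as known, while you unpack it via the multiplicative Hilbert matrix identity $\int_0^\infty|G(\sigma)|^2\,d\sigma=\sum_{m,n\ge2}a_m\overline{a_n}(mn)^{-1/2}/\log(mn)$, which is fine. For the failure of the $L^2$-Carleson property, however, the paper uses a \emph{smooth} test family --- the Poisson kernel at the point $w_j=p_j^{-1/2-\varepsilon}$ --- whereas you use a \emph{sharp} truncation $f_x=\sum_{q_+\le x}(q_+)^{-1/2}z^{\kappa(q)}$; the two are related by the heuristic $\varepsilon\leftrightarrow 1/\log x$. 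The paper's choice has the advantage that both sides compute exactly as Euler products: $\|f_d\|_{L^2}^2\to\zeta(1+2\varepsilon)^2/\zeta(2+4\varepsilon)\simeq\varepsilon^{-2}$ and $(\mathscr{B}^{-1}\mathscr{P}f_d)(1/2+\sigma)\to\zeta(1+\varepsilon+\sigma)^2/\zeta(2+2\varepsilon+2\sigma)\simeq(\sigma+\varepsilon)^{-2}$, so $\int_0^1(\sigma+\varepsilon)^{-4}\,d\sigma\simeq\varepsilon^{-3}$ with no truncation error to control. Your approach reaches the same $(\log x)^3$ versus $(\log x)^2$ gap but requires the additional Abel-summation tail estimate you flag as ``the genuinely delicate point'' (that $\sum_{N\le x}2^{\omega(N)}N^{-1-\sigma}\gtrsim\sigma^{-2}$ uniformly for $\sigma\ge C/\log x$); this is true and routine, but it is extra work that the Poisson-kernel regularization sidesteps entirely.
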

\begin{proof}
	It is well-known that $\mu_\infty$ is a $H^2$-Carleson measure 
\cite{BPSSV,OS12}. Let us therefore prove that $\mu_\infty$ is not a 
$L^2$-Carleson measure. Fix $\varepsilon>0$ and define $w \in 
\mathbb{D}^\infty \cap \ell^2$ by $w_j = p_j^{-1/2-\varepsilon}$. We will 
consider the kernel of the $d$-dimensional Poisson transform, 
	$$f_d(z) = \prod_{j=1}^d \frac{1-|w_j|^2}{|1-\overline{z_j}w_j|^2}.$$ 
	First observe that
	\[\lim_{d\to\infty}\|f_d\|_{L^2(\mathbb{T}^\infty)}^2 = 
\prod_{j=1}^\infty \frac{1-p_j^{-2-4\varepsilon}}{(1-p_j^{-1-2\varepsilon})^2} 
=  \frac{[\zeta(1+2\varepsilon)]^2}{\zeta(2+4\varepsilon)} \simeq 
\varepsilon^{-2}.\]
	Next, we have that
	\[\lim_{d \to \infty} \big(\mathscr{B}^{-1}\mathscr{P} f_d 
\big)(1/2+\sigma) = \sum_{q \in \mathbb{Q}_+} q_+^{-1-\varepsilon-\sigma},\]
	uniformly convergent in $\sigma \in [0, 1]$. Note that
	\[\sum_{q \in \mathbb{Q}_+} q_+^{-1-\varepsilon-\sigma} = 
\sum_{n=1}^\infty 2^{\omega(n)}n^{-1-\varepsilon-\sigma} \simeq 
(\sigma+\varepsilon)^{-2},\]
	since there are $2^{\omega(n)}$ rational numbers $q \in \mathbb{Q}_+$ 
such that $q_+ = n$. This concludes the argument, since
	\[\int_0^1 \frac{d\sigma}{(\sigma+\varepsilon)^4} \simeq 
\varepsilon^{-3}. \qedhere\]
\end{proof}

\bibliographystyle{amsplain} 
\bibliography{polyberg}

\end{document}